\numberwithin{equation}{section}
\newtheorem{theorem}[equation]{Theorem}
\newtheorem{lemma}[equation]{Lemma}
\newtheorem{proposition}[equation]{Proposition}
\theoremstyle{definition}
\newtheorem{remark}[equation]{Remark}
\newtheorem{definition}[equation]{Definition}
\newtheorem*{claim}{Claim}
\DeclareMathOperator\supp{supp}
\begin{document}
\title[Weak-type (1,1) maximal inequality and pointwise ergodic theorems along thin sets]{Weak-type (1,1) inequality for discrete maximal functions and pointwise ergodic theorems along thin arithmetic sets}
\author{Leonidas Daskalakis}
\thanks{Department of Mathematics, Rutgers University, Leonidas Daskalakis is supported by the NSF grant DMS-2154712.}
\maketitle

\begin{abstract}
 We establish weak-type $(1,1)$ bounds for the maximal function associated with ergodic averaging operators modeled  on a wide class of thin deterministic sets $B$. As a corollary we obtain the corresponding pointwise convergence result on $L^1$. This contributes yet another counterexample for the conjecture of Rosenblatt and Wierdl from 1991 asserting the failure of pointwise convergence on $L^1$ of ergodic averages along arithmetic sets with zero Banach density. The second main result is a multiparameter pointwise ergodic theorem in the spirit of Dunford and Zygmund along $B$ on $L^p$, $p>1$, which is derived by establishing uniform oscillation estimates and certain vector-valued maximal estimates.
\end{abstract}

\section{Introduction}
\subsection{Brief historical remarks}
In 1991 Rosenblatt and Wierdl \cite[Conjecture~4.1]{RWC} formulated a famous conjecture asserting that for any arithmetical set $A$ with zero Banach density and for any $(X,\mathcal{B},\mu,T)$ aperiodic probability dynamical system, there exists a function $f\in L^1_{
\mu}(X)$, such that 
\[
M_{A,N}f=\frac{1}{|A\cap[1,N]|}\sum_{n\in A\cap[1,N]}f\circ T^n\qquad \text{ does not converge almost everywhere,}
\]
i.e. the set of $x\in X$ such that $\lim_{N\to\infty}M_{A,N}f(x)$ does not exist has positive measure. This was disproven in 2006 by Buczolich \cite{BUL1}, where he provided a counterexample by constructing inductively an appropriate set $A$ of zero Banach density for which one gets the pointwise convergence of the ergodic averages $M_{A,N}f$ for all $f\in L^1$.

Calder\'on's transference principle suggests that such questions are closely related to the study of weak-type $(1,1)$ estimates for the maximal function corresponding to those averages over the integer shift system, namely for the operator
\[
\mathcal{M}_{A}f(x)=\sup_{N\in\mathbb{N}}\frac{1}{|A\cap[1,N]|}\sum_{n\in A\cap[1,N]}|f(x-n)|
\]
A year later it was shown \cite{ntoc} that for $A=\big\{\lfloor n^c\rfloor:\,n\in\mathbb{N}\big\}$ with $c\in\big(1,\frac{1001}{1000}\big)$, the operator $\mathcal{M}_{A}$ is of weak-type $(1,1)$, and as a corollary the authors proved pointwise convergence on $L^1$ for the corresponding ergodic averages along the set $A$, providing a counterexample of the aforementioned conjecture given by a concrete formula. This class of examples was extended in \cite{W11} where the author established the weak-type (1,1) bounds for $\mathcal{M}_{A}$ and the corresponding pointwise ergodic theorem on $L^1$ for sets of the form $\big\{\lfloor n^{c}\ell(n) \rfloor:\,n\in\mathbb{N}\big\}$, where $c$ is close to $1$ and $\ell$ is a certain kind of slowly varying function, for example any iterate of $\log$, see Definitions~$\ref{def1},\ref{def0}$ below. 

One of the main results of the present work is a natural extension of the result from \cite{W11} and in order to formulate it, we must introduce two families of functions that one may think of as slowly varying and regularly varying functions respectively.
	\begin{definition}\label{def1}
	Fix $x_0\ge1$ and let $\mathcal{L}$ denote the set of all functions $\ell\colon[x_0,\infty)\to[1,\infty)$ such that
	\[
	\ell(x)=\exp\bigg(\int_{x_0}^x\frac{\vartheta(t)}{t}dt\bigg)
	\] 
	where $\vartheta\in\mathcal{C}^2([x_0,\infty))$ is a real-valued function satisfying
\[
\vartheta(x)\to 0\,\,\text{, }x\vartheta'(x)\to 0\,\,\text{, }x^2\vartheta''(x)\to 0\,\,\text{ as }x\to \infty\text{.}
\]
\end{definition} 

\begin{definition}\label{def0}
	Fix $x_0\ge1$ and let $\mathcal{L}_0$ denote the set of all functions $\ell\colon[x_0,\infty)\to[1,\infty)$ such that
	\[
	\ell(x)=\exp\bigg(\int_{x_0}^x\frac{\vartheta(t)}{t}dt\bigg)
	\] 
	where $\vartheta\in\mathcal{C}^2([x_0,\infty))$ is a positive and decreasing function satisfying
\[
\vartheta(x)\to 0\,\,\text{, }\frac{x\vartheta'(x)}{\vartheta(x)}\to 0\,\,\text{, }\frac{x^2\vartheta''(x)}{\vartheta(x)}\to 0\,\,\text{ as }x\to \infty\text{,}
\]
and such that for all $\varepsilon>0$ we have $\vartheta(x)\gtrsim_{\varepsilon}x^{-\varepsilon}$ and $\lim_{x\to\infty}\ell(x)=\infty$.

\end{definition}

We note that $\mathcal{L}_0\subseteq \mathcal{L}$.
\begin{definition}
Fix $x_0\ge 1$, $c\in(1,\infty)$ and let $\mathcal{R}_c$ be the set of all functions $h\colon [x_0,\infty)\to [1,\infty)$ such that $h$ is strictly increasing, convex and of the form $h(x)=x^c\ell(x)$ for some $\ell\in\mathcal{L}$. We define $\mathcal{R}_1$ analogously, but with the extra assumption that $\ell\in\mathcal{L}_0$. 
\end{definition} 

We are now ready to give the definitions of the arithmetic sets we are interested in. Let $c_1,c_2\in[1,2)$ and let us fix $h_1$ and $h_2$ in $\mathcal{R}_{c_1}$ and $\mathcal{R}_{c_2}$ respectively. Let $\varphi_1$ and $\varphi_2$ be the compositional  inverses of $h_1$ and $h_2$ and for convenience, let $\gamma_1=1/c_1$ and $\gamma_2=1/c_2$.  Let us fix a function $\psi\colon [1,\infty)\to(0,1/2]$, $\psi\in\mathcal{C}^2([1,\infty))$ such that
\[
\psi(x)\sim\varphi_2'(x)\text{ , }\,\,\psi'(x)\sim\varphi_2''(x)\text{ , }\,\,\psi''(x)\sim\varphi_2'''(x)\text{ as }x\to \infty
\]
We can now define $B_+=\{\,n\in\mathbb{N}:\{\varphi_1(n)\}<\psi(n)\,\}$ and $B_-=\{\,n\in\mathbb{N}:\{-\varphi_1(n)\}<\psi(n)\,\}$, where $\{x\}=x-\lfloor x \rfloor$.

Those sets have been introduced and studied in \cite{HLMP}, where the authors proved that the Hardy–Littlewood majorant property holds for them, see Theorem 1 and 2 in \cite{HLMP}, as a corollary of a restriction theorem. Recently, the author \cite{RTP} proved an analogous result for $\mathbb{P}\cap B_{\pm}$, see Theorem~1.8 in \cite{RTP}, as well as Roth's theorem in these sets, namely, it was show that any subset of the primes of the form $B_{\pm}$ with positive relative upper density contains infinitely many non-trivial three-term arithmetic progressions. 

Following \cite{RTP} we repeat and extend some comments on the sophisticated nature of the sets $B_{\pm}$, simultaneously we try to convince the reader about the richness of the family of the sets that we consider. More precisely, to motivate the definition of $B_{\pm}$, we note that
\[
n\in B_-\iff \exists  m\in\mathbb{N}\colon\,0\le m-\varphi_1(n)<\psi(n)\iff \exists m\in\mathbb{N}\colon\,\varphi_1(n)\le m < \varphi_1(n)+\psi(n)\iff
\]  
\[
\exists m\in\mathbb{N}\colon\,n\le h_1(m)<h_1(\varphi_1(n)+\psi(n))\iff \exists m\in\mathbb{N}\colon\,h_1(m)\in [n,h_1(\varphi_1(n)+\psi(n)))
\]
For $\gamma\in(0,1)$, $h_1(x)=h_2(x)=x^{1/\gamma}$ and $\psi(x)=\varphi_1(x+1)-\varphi_1(x)$, note that the last condition becomes $m^{1/\gamma}\in [n,n+1)$ or $n=\lfloor m^{1/\gamma}\rfloor$, thus $B_-=\{\,\lfloor m^{1/\gamma} \rfloor\colon m\in\mathbb{N}\,\}$. It is not difficult to see that any set 
\begin{equation}\label{Nh}
\mathbb{N}_{h}\coloneqq\{\,\lfloor h(m) \rfloor\colon m\in\mathbb{N}\,\},\quad h\in\mathcal{R}_c
\end{equation} can also be brought in the form $B_-$ by similar appropriate choices. Thus the family of sets we consider includes the fractional powers with exponent close to $1$ and even the more general sets considered in \cite{W11}.

\subsection{One-parameter ergodic theorem on $L^1$} We are now ready to state one of the main results of our paper. Due to some technical complications, we demand further that $\varphi_1\simeq\varphi_2$, and note that this implies that $c_1=c_2$.
\begin{theorem}[Weak-type (1,1) inequality for $\mathcal{M}_{B_{\pm}}$]\label{WT11}
Assume $c_1\in (1,30/29)$ and $\varphi_1\simeq\varphi_2$. Then the maximal function
		\[
		\mathcal{M}_{B_{\pm}}f(x)=\sup_{N\in \mathbb{N}}\frac{1}{|B_{\pm}\cap [1,N]|}\sum_{n\in B_{\pm}\cap [1,N]}|f(x-n)|
		\]
		is of weak-type (1,1), i.e.:
		\[
		|\{x\in\mathbb{Z}\,:\,|\mathcal{M}_{B_{\pm}}f(x)|>\lambda\}|\le C \lambda^{-1}\|f\|_{\ell^1(\mathbb{Z})}
		\]
		By interpolation, this implies that for all $p\in (1,\infty]$, there exists a constant $C_p>0$ such that
		\[
		\|\mathcal{M}_{B_{\pm}}f\|_{\ell^{p}(\mathbb{Z})}\le C_p \|f\|_{\ell^p(\mathbb{Z})}\text{.}
		\]
	\end{theorem}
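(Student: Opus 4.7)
The plan is to follow the Fourier-analytic framework developed for similar sets in \cite{ntoc,W11}, combining the Hardy--Littlewood circle method with a Calder\'on--Zygmund decomposition to upgrade $\ell^2$ bounds to the weak-type $(1,1)$ endpoint. Since $|B_{\pm}\cap[1,N]|\asymp N\psi(N)$ varies regularly in $N$, a standard monotonicity/dyadic argument reduces the problem to bounding the dyadic maximal operator $\sup_{k\ge 0}|M_{B_{\pm},2^k}f|$. Writing $M_{B_{\pm},N}f=K_N\ast f$ with $K_N=|B_{\pm}\cap[1,N]|^{-1}\ind{B_{\pm}\cap[1,N]}$, it suffices to control the multipliers $\widehat{K_N}(\xi)=|B_{\pm}\cap[1,N]|^{-1}\sum_{n\in B_{\pm}\cap[1,N]}e^{-2\pi i n\xi}$ on the circle $\mathbb{T}$.

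Expanding the defining indicator $\ind{[0,\psi(n))}(\{\pm\varphi_1(n)\})$ into a Fourier series on the torus rewrites the exponential sum $\sum_{n\in B_{\pm}\cap[1,N]}e^{-2\pi i n\xi}$ as an average, with bounded Fourier coefficients, of one-variable sums of the form $\sum_{n\le N}\psi(n)e^{-2\pi i(n\xi\mp m\varphi_1(n))}$. For a small $\eta=\eta(c_1)>0$ I introduce the major arcs $\mathfrak{M}_N=\bigcup_{q\le N^{\eta}}\bigcup_{(a,q)=1}\{\xi\in\mathbb{T}:|\xi-a/q|\le N^{-1+\eta}\}$ and the minor arcs $\mathfrak{m}_N=\mathbb{T}\setminus\mathfrak{M}_N$. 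On $\mathfrak{M}_N$, adapting the stationary-phase computation from \cite{W11} yields a main-term approximation $\widehat{K_N}(\xi)\approx G(a/q)\,\widehat{\nu_N}(\xi-a/q)$, where $\nu_N$ is a smooth probability measure at scale $N$ and $|G(a/q)|$ enjoys Gauss-sum type decay in $q$. Letting $L_N$ denote the operator with this major-arc multiplier, a Magyar--Stein--Wainger style argument dominates $\sup_k|L_{2^k}f|$ pointwise by a finite sum of translates of the classical discrete Hardy--Littlewood maximal function of $f$, which is of weak-type $(1,1)$.

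For the error $E_N=M_{B_{\pm},N}-L_N$, the minor-arc estimate $\sup_{\xi\in\mathfrak{m}_N}|\widehat{K_N}(\xi)|\lesssim N^{-\delta}$ combined with Plancherel on the major-arc discrepancy yields $\|E_{2^k}f\|_{\ell^2}\lesssim 2^{-k\delta'}\|f\|_{\ell^2}$, so that $\sup_k|E_{2^k}f|$ is bounded on $\ell^2$. To reach the endpoint one performs a Calder\'on--Zygmund decomposition $f=g+\sum_Q b_Q$ at level $\lambda$: the good part is handled by the $\ell^2$ estimate together with $\|g\|_{\ell^2}^2\lesssim\lambda\|f\|_{\ell^1}$, while the bad part exploits $\int b_Q=0$ and a quantitative regularity bound $\sum_n|K_N(n-y)-K_N(n)|\lesssim|y|/N$ valid outside a dilation of $Q$, which follows from the smoothness of the major-arc kernel after the oscillatory contributions have been separated. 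The main obstacle is the minor-arc exponential sum estimate: one must prove $\bigl|\sum_{n\le N}\psi(n)e^{-2\pi i(n\xi\mp m\varphi_1(n))}\bigr|\lesssim N^{1-\delta}\psi(N)$ uniformly in $m$ and in the slowly varying data $(h_1,h_2,\psi)$, with a polynomial gain. This is carried out by van der Corput's second and third derivative tests, whose quantitative constraints, together with the loss incurred in the reduction to the main term, are precisely what force the range $c_1\in(1,30/29)$; the additional hypothesis $\varphi_1\simeq\varphi_2$ ensures that the weight $\psi\sim\varphi_2'$ is compatible with the oscillatory phase $\varphi_1$ throughout this analysis.
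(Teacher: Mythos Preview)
Your outline has a genuine gap at the Calder\'on--Zygmund step, and the circle-method scaffolding you set up is not the right one for these sets. First, there is no arithmetic structure here: the approximation $\sum_{n\in B\cap[1,N]}e^{2\pi i n\xi}=\sum_{n\le N}\psi(n)e^{2\pi i n\xi}+O(\varphi_2(N)N^{-\chi})$ holds \emph{uniformly in $\xi$} (this is Lemma~\ref{TrEst}), so there are no nontrivial major arcs, no Gauss sums $G(a/q)$, and no Magyar--Stein--Wainger machinery is needed. The decomposition is simply $K_N=L_N+E_N$ with $L_N$ the smooth weighted average and $\|\widehat{E_N}\|_{L^\infty(\mathbb T)}\lesssim N^{-\chi}$.

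The real problem is your treatment of the bad part. You assert a H\"ormander-type bound $\sum_n|K_N(n-y)-K_N(n)|\lesssim |y|/N$, but $K_N$ is a normalized sum of Dirac masses on the sparse set $B$ and satisfies no such regularity; the error kernel $E_N=K_N-L_N$ inherits this roughness, so the standard Calder\'on--Zygmund argument cannot close. This is exactly why the paper (following \cite{FE,MC,ntoc,W11}) works instead with the \emph{autocorrelation} $K_N*\widetilde K_N$ in physical space: Lemma~\ref{1Aprox} and Lemma~\ref{2Aprox} show that $K_N*\widetilde K_N$ admits a decomposition $G_N+E_N$ with $|G_N|\lesssim N^{-1}$, $|G_N(x+h)-G_N(x)|\lesssim N^{-2}|h|$, and $|E_N|\lesssim N^{-1-\chi}$ away from a bounded neighborhood of $0$. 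These are fed into a refined Fefferman-style Calder\'on--Zygmund decomposition (Theorem~\ref{absThm}) in which the bad function is split once more according to the level sets $\{|b_s|>\alpha d_n\}$, so that $\ell^2$ estimates based on $K_n*\widetilde K_n$ can be applied to the residual ``very bad'' piece; this step is where the argument actually lives, and it is absent from your sketch. Finally, the hypothesis $\varphi_1\simeq\varphi_2$ is not used to match the weight to the phase in a van der Corput estimate as you suggest: it enters through Lemma~\ref{BVBI} to guarantee that $B$ contains no arbitrarily long blocks of consecutive integers, which is what makes the counting argument behind Lemma~\ref{1Aprox} (the bound $|K_N*\widetilde K_N(x)|\lesssim N^{-1}$ for small $|x|$) go through.
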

We use this, together with 2-oscillation estimates, see Theorem~$\ref{OM}$, to obtain the following pointwise convergence result.

\begin{theorem}[Pointwise ergodic theorem]\label{PTC}
Assume $c_1\in (1,30/29)$, $\varphi_1\simeq\varphi_2$ and let $(X,\mathcal{B},\mu,T)$ be an invertible $\sigma$-finite measure preserving system. For any $p\in [1,\infty)$ and any $f\in L_{\mu}^p(X)$, we have that
\[
M_{B_{\pm},N}f(x)=\frac{1}{|B_{\pm}\cap [1,N]|}\sum_{n\in B_{\pm}\cap [1,N]}f(T^nx)\quad \text{ converges $\mu$-a.e. on $X$.}
\]
\end{theorem}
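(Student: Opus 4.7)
My plan is to deduce Theorem~\ref{PTC} from Theorems~\ref{WT11} and~\ref{OM} via Calder\'on's transference principle and a Banach principle argument; the genuinely analytic input is carried by those two results, so the remaining steps are essentially soft functional analysis.

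First I would transfer both estimates from $\mathbb{Z}$ to the measure-preserving system $(X,\mathcal{B},\mu,T)$. Calder\'on's transference applied to Theorem~\ref{WT11} yields that
\[
M^{\ast}_{B_{\pm}}f(x)\;\coloneqq\;\sup_{N\in\mathbb{N}}\big|M_{B_{\pm},N}f(x)\big|
\]
is of weak-type $(1,1)$ on $L^1_{\mu}(X)$ and of strong-type $(p,p)$ on $L^p_{\mu}(X)$ for every $p\in(1,\infty]$. The $\sigma$-finite assumption causes no trouble: one argues on $\{x\in E: M^{\ast}_{B_{\pm}}f(x)>\lambda\}$ for an arbitrary $E\subseteq X$ of finite measure and applies the $\ell^p(\mathbb{Z})$ bound orbit-by-orbit on a sufficiently long interval. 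The same principle turns Theorem~\ref{OM} into a uniform 2-oscillation estimate for the sequence $\{M_{B_{\pm},N}f\}_{N\in\mathbb{N}}$ in $L^p_{\mu}(X)$ for every $p\in(1,\infty)$.

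For $p\in(1,\infty)$ and $f\in L^p_{\mu}(X)$ the transferred 2-oscillation bound already delivers a.e.\ convergence: uniform-in-$\{N_j\}$ finiteness of the $L^p$-norm of the 2-oscillation seminorm, combined with Chebyshev, forces $\{M_{B_{\pm},N}f(x)\}_{N}$ to be Cauchy for $\mu$-a.e.\ $x$. For $p=1$ I would invoke Banach's principle. Simple functions supported on sets of finite measure are dense in $L^1_{\mu}(X)$ and lie in $L^1_{\mu}\cap L^2_{\mu}$; on this dense subspace the previous step gives a.e.\ convergence. For a general $f\in L^1_{\mu}(X)$ and $\varepsilon>0$, decompose $f=g+h$ with $g\in L^1\cap L^2$ and $\|h\|_{L^1_{\mu}}<\varepsilon$. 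Then
\[
\limsup_{N_1,N_2\to\infty}\big|M_{B_{\pm},N_1}f(x)-M_{B_{\pm},N_2}f(x)\big|\;\le\;2\,M^{\ast}_{B_{\pm}}h(x)\qquad\mu\text{-a.e.,}
\]
and the weak-type $(1,1)$ bound gives $\mu(\{M^{\ast}_{B_{\pm}}h>\delta\})\le C\delta^{-1}\varepsilon$ for every $\delta>0$. Sending first $\varepsilon\to 0$ and then $\delta\to 0$ finishes the argument.

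No serious obstacle is expected: Theorems~\ref{WT11} and~\ref{OM} do all the analytic work, and the only steps requiring minor attention are the extension of Calder\'on transference and of the standard oscillation-to-convergence implication to the $\sigma$-finite setting, both of which are routine.
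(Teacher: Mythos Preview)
Your proposal is correct and follows essentially the same route as the paper: transfer Theorem~\ref{WT11} via Calder\'on to get the maximal inequality on $X$, use the 2-oscillation estimates of Theorem~\ref{OM} (which are already stated for general systems, though proved via transference) to obtain a.e.\ convergence for $p\in(1,\infty)$, and then for $p=1$ run the Banach principle with the dense class $L^1_\mu\cap L^2_\mu$. The paper sketches exactly this argument in the paragraph following the statement of Theorem~\ref{PTC}, noting also an alternative route for the dense-class convergence via Lemma~\ref{TrEst} and \cite{W11}; your choice of the oscillation route is the one the paper ultimately favors.
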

Before discussing the strategy of our proofs, we would like to further examine the sets $B_+$ and $B_-$. Let us restrict our attention to the sets $B_+$, which we call $B$ from now on, as the results for $B_-$ are of equal difficulty. Note that 
	\[
	n\in B\iff \exists  m\in\mathbb{N}:\,0\le \varphi_1(n)-m<\psi(n)\iff \exists m\in\mathbb{N}:\,m\in(\varphi_1(n)-\psi(n),\varphi_1(n)]
	\]  
Now assume that $n\in B$, and $m\in \mathbb{N}$ is such that $m\in(\varphi_1(n)-\psi(n),\varphi_1(n)]$ and assume that $n_0$ is the smallest integer such that $m\in(\varphi_1(n_0)-\psi(n_0),\varphi_1(n_0)]$. Even in simple examples, we should expect that $B$ will contain a lot of consecutive integers after $n_0$. For example, for any $\varphi_1$ inverse of a function in $\mathcal{R}_c$, let $\varphi_2=C100\varphi_1$, where $C$ is the doubling constant of $\varphi_1'$, namely, $\varphi_1'(x)\le C \varphi'(2x)$,  and let $\psi=\varphi_2'$. Since $\varphi_1$ is increasing and $\psi=\varphi_2'$ is decreasing, we get that $\varphi_1-\psi$ is increasing and thus since $m\notin (\varphi_1(n_0-1)-\psi(n_0-1),\varphi_1(n_0-1)]$, we get that $m>\varphi_1(n_0-1)$. We claim that for all $l\in\{0,\cdots,99\}$, we get that   $m\in (\varphi_1(n_0+l)-\psi(n_0+l),\varphi_1(n_0+l)]$, which implies that $B$ contains $99$ consecutive integers after $n_0$. Clearly, $m\le \varphi_1(n_0+l)$ for all $l\in\mathbb{N}_0$. If we assume for the sake of a contradiction that for some $l\in\{0,\dotsc,99\}$ we have that $m<\varphi_1(n_0+l)-\psi_1(n_0+l)$, then  
	\[
	\varphi_1(n_0-1)<\varphi_1(n_0+l)-\psi(n_0+l)
	\]
	and by the Mean Value Theorem, there exists a $\xi_{n_0,l}\in(n_0-1,n_0+l)$ such that
	\[
	\frac{C100\varphi_1'(n_0+l)}{l+1}<\frac{\varphi_1(n_0+l)-\varphi_1(n_0-1)}{l+1}=\varphi_1'(\xi_{n_0,l})\le \varphi_1'(n_0-1)\le C\varphi_1'(2n_0-2)\le C\varphi_1'(n_0+l)
	\]
	Thus
	\[
	100<l+1\text{ which is a contradiction.}
	\]
This shows that the set $B$ we considered here contains infinitely many full blocks of 100 consecutive integers. Such a set $B$ stands in sharp contrast to the sets of the form $\mathbb{N}_{h}=\{\,\lfloor h(m) \rfloor\colon m\in\mathbb{N}\,\}$, $h\in\mathcal{R}_c$, as the gaps between members of such sets tend to infinity.

In general, the constant $\sup_{x\in[1,\infty)}\frac{\varphi_2'(x)}{\varphi_1'(x)}$ determines an important qualitative aspect of the form of the sets $B$, see Lemma~$\ref{BVBI}$. Loosely speaking, for big intervals of integers where the ratio is bigger than $L$, we expect that $B$ will contain blocks of length at least $L/C$, where $C$ is the doubling constant of $\varphi_1'$. The technical issues that arose when trying to handle the case where $B$ contains arbitrarily long intervals of integers (specifically in the counting Lemma~$\ref{1Aprox}$) forced the author to impose the restriction $\varphi_1'\simeq \varphi_2'$, which is equivalent to $\varphi_1\simeq \varphi_2$. Even in this simpler case, we note that $\frac{\varphi_2'(x)}{\varphi_1'(x)}$ could oscillate and thus $B$ could contain blocks of various oscillating lengths. We are now ready to make the following remarks.

\begin{remark}[Smooth dyadic maximal operator]\label{SDef}To establish Theorem~$\ref{WT11}$, it is convenient to work with a smooth dyadic variant of the maximal operator. More precisely, 
let's fix $\eta\in \mathcal{C}^{\infty}(\mathbb{R})$ such that $0\le\eta(x)\le1$ for all $x\in\mathbb{R}$, $\supp(\eta)\subseteq (1/2,4)$ and $\eta(x)=1$ for all $x\in[1,2]$. We define
\[
\mathcal{M}^{(sd)}_Bf(x)=\sup_{k\in\mathbb{N}_0}\bigg\{\frac{1}{\varphi_2(2^k)}\sum_{n\in B}|f(x-n)|\eta\Big(\frac{n}{2^k}\Big)\bigg\}
\]
and note that
\[
\mathcal{M}_Bf(x)\lesssim\sup_{k\in\mathbb{N}_0}\bigg\{\frac{1}{|B\cap[2^{k},2^{k+1})|}\sum_{n\in B\cap[2^{k},2^{k+1})}|f(x-n)|\bigg\}\lesssim \mathcal{M}^{(sd)}_Bf(x)
\]
The first inequality is straightforward, and for the second one, note that Lemma~$\ref{TrEst}$ implies that
\[
|B\cap [1,N]|\simeq |B\cap [N/2,N)|\simeq \varphi_2(N)\text{ for sufficiently large }N\text{.}
\]
Thus it suffices to establish the weak-type (1,1) bound for the smooth dyadic maximal function, so let us denote $\mathcal{M}^{(sd)}_B$ by $\mathcal{M}$ and let 
\[
K_N(x)=\frac{1}{\varphi_2(N)}\sum_{n\in B}\delta_{n}(x)\eta\Big(\frac{n}{N}\Big)\text{,}
\]
so that $\mathcal{M}f(x)=\sup_{k\in\mathbb{N}_0} \{K_{2^k}*|f|(x)\}$.
\end{remark}

We give a brief overview of the main ideas of the proof of Theorem~$\ref{WT11}$. We use a subtle variation of the Calder\'on-Zygmund decomposition that was introduced by Fefferman \cite{FE}, see also \cite{MC}, in a similar manner to \cite{ntoc} and \cite{W11}. More specifically, after approximating $K_n*\widetilde{K}_n$ by suitable well-behaving functions, see Lemma~$\ref{2Aprox}$, we employ a refined Calder\'on-Zygmund decomposition which allows us to use $\ell^2$-estimates for the ``very bad'' part of the decomposition, see subsection~$\ref{RBADPART}$. The aforementioned approximation is analogous to the one presented in section 5 of \cite{W11} and similar techniques are used here. The novelty lies in the sophisticated nature of the sets $B$ which complicates the situation substancially. For example, bounding $|K_n*\widetilde{K}_n(x)|$ for small values of $x$, see Lemma~$\ref{1Aprox}$, is precisely what forced the author to impose the extra assumption $\varphi_1\simeq \varphi_2$. To carry out the approximation one needs to estimate certain exponential sums and the main tool is Van der Corput's inequality. Some of the necessary exponential sum estimates can be readily found in section 3 of \cite{W11} and suitable extensions are already established by the author in \cite{RTP}. Finally, we formulate an abstract result, see Theorem~$\ref{absThm}$, which is a generalization of Theorem~6.1 of \cite{W11},  adapted to our approximation for $K_n*\widetilde{K}_n$. We give the full proof of Theorem~$\ref{WT11}$ in section $\ref{WT11Inh}$, see section 3 of \cite{ntoc}, and sections 5 and 6 of \cite{W11}.  

Combining Theorem~$\ref{WT11}$ with the trivial estimate $\|\mathcal{M}_B\|_{\ell^{\infty}(\mathbb{Z})\to\ell^{\infty}(\mathbb{Z})}\lesssim 1$, we obtain by interpolation that $\|\mathcal{M}_{B}\|_{\ell^{p}(\mathbb{Z})\to\ell^{p}(\mathbb{Z})}\lesssim_p 1$, for all $p\in(1,\infty]$. Calder\'on's transference principle implies that for any invertible $\sigma$-finite measure preserving system $(X,\mathcal{B},\mu,T)$ we have 
\[
\big\|\sup_{N\in\mathbb{N}}M_{B,N}f\big\|_{L^p_{\mu}(X)}\lesssim_p\|f\|_{L^p_{\mu}(X)}\text{ for }p\in(1,\infty)\text{, and }\mu(\big\{x\in X:\,|\sup_{N\in\mathbb{N}}M_{B,N}f(x)|>\lambda\big\})\lesssim \frac{\|f\|_{L^{1}_{\mu}(X)}}{\lambda}
\]
A standard argument shows that $\mathcal{L}^p_{B}=\big\{f\in L^p_{\mu}(X):\,\lim_{N\to\infty}M_{B,N}f\text{ exists }\mu\text{--a.e.}\big\}$ is closed in $L_{\mu}^p(X)$, for all $p\in[1,\infty)$, and thus to establish Theorem~$\ref{PTC}$ it suffices to exhibit an $L_{\mu}^p$-dense class of functions $\mathcal{D}_p$ contained in $\mathcal{L}^p_{B}$. The exponential sum estimates of Lemma~$\ref{TrEst}$ together with a straightforward adaptation of the argument presented in section 3 of \cite{W11}, which uses ideas from \cite{BET}, shows that one may take $\mathcal{D}_p=L^p_{\mu}(X)\cap L^2_{\mu}(X)$ and conclude. Instead of this, one can derive the pointwise convergence theorem immediately by the much stronger uniform 2-oscillation $L^p_{\mu}$-estimates of Theorem~$\ref{OM}$, which are also exploited in the sequel.

\subsection{Multi-parameter ergodic theorem} The second main result of our paper is a multi-parameter variant of Theorem~$\ref{PTC}$. Here we discard the assumption $\varphi_1\simeq\varphi_2$ and the acceptable range of $c_1$ and $c_2$ is considerably larger. In contrast to the one-parameter situation, weak-type $(1,1)$ estimates do not hold here.

To make the exposition slightly cleaner, let's fix $k\in\mathbb{N}$ and $B_1,\dotsc,B_k$ as in the introduction where if $(h_{1,i},h_{2,i})\in\mathcal{R}_{c_{1,i}}\times \mathcal{R}_{c_{2,i}} $ are as in the definition of $B_i$, then assume that $c_{1,i}\in[1,2)$ and $c_{2,i}\in[1,6/5)$ for all $i\in[k]$. We are ready to state the second main result.

\begin{theorem}\label{MPTC}
Assume $(X,\mathcal{B},\mu)$ is a $\sigma$-finite measure space and $\{S_i:\,i\in[k]\}$ is a family of invertible $\mu$-invariant commuting transformations. Then for any $p\in (1,\infty)$ and any $f\in L^p_{\mu}(X)$ we have that 
\[
\lim_{\min\{N_1,\dotsc,N_k\}\to\infty}\frac{1}{\prod_{i=1}^d |B_i\cap[1,N_i]|}\sum_{l \in \prod_{i=1}^d B_i\cap[1,N_i]}f \circ S^{(l)}(x)
\text{ exists for $\mu$-a.e. $x\in X$,} 
\]
where $S^{(l_1,\dotsc,l_k)}=S^{l_1}_1\circ\dots\circ S^{l_k}_k$.
\end{theorem}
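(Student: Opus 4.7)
The plan is to reduce, via Calder\'on's transference principle applied to the commuting family $\{S_i\}_{i=1}^k$, to proving $\mu$-a.e.\ convergence of the multi-parameter convolution averages
\[
\mathcal{A}_{\vec N} f(x) = \frac{1}{\prod_i \varphi_{2,i}(N_i)} \sum_{\vec n \in \prod_i B_i \cap [1,N_i]} f(x - n_1 e_1 - \cdots - n_k e_k)
\]
on $\ell^p(\mathbb{Z}^k)$, where each coordinate copy of the average acts only in its own variable. Since the class $\mathcal{L}^p = \{f : \lim_{\min N_i \to \infty} \mathcal{A}_{\vec N} f \text{ exists $\mu$-a.e.}\}$ is closed in $L^p_\mu$ as soon as the multi-parameter maximal function is bounded on $L^p$, it suffices to establish (i) $\|\sup_{\vec N}|\mathcal{A}_{\vec N} f|\|_{\ell^p(\mathbb{Z}^k)} \lesssim \|f\|_{\ell^p(\mathbb{Z}^k)}$ for $p>1$, and (ii) convergence on an $\ell^p$-dense subclass.

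For the maximal bound, I would dominate pointwise
\[
\sup_{\vec N}|\mathcal{A}_{\vec N} f| \;\le\; \mathcal{M}_{B_1}^{(1)} \mathcal{M}_{B_2}^{(2)} \cdots \mathcal{M}_{B_k}^{(k)} |f|,
\]
where $\mathcal{M}_{B_i}^{(i)}$ denotes the one-parameter maximal operator acting only in the $i$-th coordinate. Each factor is bounded on $\ell^p(\mathbb{Z})$ for every $p>1$; in the narrow range this follows from Theorem~\ref{WT11} by interpolation with the trivial $\ell^\infty$ bound, and in the wider range assumed here ($c_{1,i}\in[1,2)$, $c_{2,i}\in[1,6/5)$) it follows from an $\ell^2$ argument based on Plancherel and the Van der Corput-type exponential sum estimates of Lemma~\ref{TrEst}, again interpolated against $\ell^\infty$. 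Iterating in $i$ via Fubini produces the desired multi-parameter $\ell^p$-maximal inequality.

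For the convergence step I would exploit the uniform one-parameter 2-oscillation inequality of Theorem~\ref{OM} through a telescoping identity
\[
\mathcal{A}_{\vec N} f - \mathcal{A}_{\vec N'} f = \sum_{i=1}^{k} \Big(\mathcal{A}_{(N_1,\dots,N_i,N'_{i+1},\dots,N'_k)} - \mathcal{A}_{(N_1,\dots,N'_i,N'_{i+1},\dots,N'_k)}\Big)f,
\]
which expresses each multi-parameter increment as a sum of $k$ increments in a single coordinate. The $i$-th summand is a one-parameter increment of $M_{B_i,\cdot}^{(i)}$ in the $i$-th variable, composed with the fixed operators $M_{B_j,\cdot}^{(j)}$ in the other coordinates. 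Applying Theorem~\ref{OM} in the $i$-th variable (in its $\ell^p(\ell^\infty)$-valued form, using that the other $M_{B_j,N_j}^{(j)}$'s are majorised coordinatewise by the positive maximal operators $\mathcal{M}_{B_j}^{(j)}$) and then invoking the multi-parameter maximal bound from Step~2 for the remaining coordinates, one obtains a uniform multi-parameter 2-oscillation estimate on $\ell^p(\mathbb{Z}^k)$. Its finiteness implies $\mu$-a.e.\ convergence for all $f\in\ell^p(\mathbb{Z}^k)$ (and hence, after transference, for all $f\in L^p_\mu(X)$).

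The main obstacle I expect is the vector-valued upgrade of the one-parameter oscillation estimate needed for the telescoping step: one must control the $i$-th oscillation not just in scalar $\ell^p$ but uniformly with respect to the lattice of remaining parameters. Because $\mathcal{M}_{B_j}$ is a supremum of positive convolution operators, its $\ell^p(\ell^\infty)$-extension is automatic from the scalar $\ell^p$-bound, which should make the vectorisation routine; the subtler point is ensuring that Theorem~\ref{OM} is formulated with uniformity adequate to pair with these vector-valued maximal bounds across the wider parameter range $c_{1,i}\in[1,2)$, $c_{2,i}\in[1,6/5)$. Once this is in place, the Dunford--Zygmund scheme closes.
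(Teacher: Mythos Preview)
Your approach is essentially the one taken in the paper, but the paper outsources the entire iteration to a black box: it feeds the two estimates of Theorem~\ref{OM} (the uniform one-parameter $2$-oscillation inequality and the $\ell^2$-valued maximal inequality) into Proposition~4.1 of \cite{MOE}, which manufactures the multi-parameter $2$-oscillation estimate, and then invokes Remark~2.4 and Proposition~2.8 of \cite{MOE} to pass from oscillation to a.e.\ convergence. So there is no separate ``dense subclass'' step and no need to transfer first to $\mathbb{Z}^k$ by hand.

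The one point where your sketch is imprecise is the nature of the vector-valued input. The iteration underlying Proposition~4.1 of \cite{MOE} requires, for each coordinate $i$, the $\ell^p(\ell^2)$-valued maximal inequality for $M_{B_i,t}$ (this is exactly estimate~\eqref{VVMFE} in Theorem~\ref{OM}), not merely the $\ell^p(\ell^\infty)$-extension of the maximal operator. The $\ell^\infty$-extension is indeed automatic from positivity, but it is not strong enough to close the induction on the number of parameters: when you telescope as you describe, the $i$-th summand carries an $\ell^2$-type oscillation in the $i$-th variable composed with a supremum in the remaining variables, and to push the $\ell^p$ norm through you need Fefferman--Stein type $\ell^2$-valued bounds for the maximal functions in the other coordinates. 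This is precisely why Theorem~\ref{OM} has two parts, and why the paper proves~\eqref{VVMFE} separately in Section~4. Once you replace your $\ell^p(\ell^\infty)$ remark with the $\ell^p(\ell^2)$ maximal estimate, your outline coincides with the content of Proposition~4.1 in \cite{MOE}, and the argument closes.
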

We make some brief historical remarks. In 1951 Dunford \cite{DF} and Zygmund \cite{ZG}  independently showed that given a $\sigma$-finite measure space $(X,\mathcal{B},\mu)$ and a family of $\mu$-invariant transformations $\{T_i:\,i\in[k]\}$, for any $p\in (1,\infty)$ and any $f\in L^p_{\mu}(X)$, we have
\[
\frac{1}{N_1\cdots N_k}\sum_{l \in \prod_{i=1}^d [1,N_i]}f(T_1^{l_1}\cdots T_k^{l_k} x)
\quad\text{converges $\mu$-a.e. on $X$ and in $L^p$ norm as $\min\{N_1,\dotsc,N_k\}\to\infty$.}
\]
For $k\ge 2$, pointwise convergence fails on $L^1$. Motivated by that observation and after his seminal work on pointwise ergodic theory \cite{bg2,bg3,BET}, Bourgain showed that for any $p\in (1,\infty)$ and any $f\in L^p_{\mu}(X)$, we have
\[
\frac{1}{N_1\cdots N_k}\sum_{l \in \prod_{i=1}^d [1,N_i]}f\Big(T_1^{P_1(l_1)}\cdots T_k^{P_k(l_k)} x\Big)\quad\text{
converges $\mu$-a.e. on $X$ as $\min\{N_1,\dotsc,N_k\}\to\infty$,}
\]
where $P_1,\dotsc, P_k\in \mathbb{Z}[x]$, $P_1(0)=\dots=P_k(0)=0$ and $\{T_i:\,i\in[k]\}$ is a family of commuting and invertible $\mu$-invariant transformations. In contrast to Dunford and Zygmund's result, the commutativity assumption turns out to be indispensable for the polynomial case. For a more thorough exposition on the matter we refer the reader to Section~1.2 in \cite{MOE} as well as the introduction from \cite{MPVBF}, see page 3. In the spirit of the above, Theorem~$\ref{MPTC}$ establishes the multi-parameter result for orbits along sets of the form $B$. For example, for appropriate choices of parameters, Theorem~$\ref{MPTC}$ implies that for any $p\in (1,\infty)$ and any $f\in L^p_{\mu}(X)$, we have
\[
\frac{1}{N_1\cdots N_k}\sum_{l \in \prod_{i=1}^d [1,N_i]}f\Big(T_1^{\lfloor l_1^{c_1}\rfloor}\cdots T_k^{\lfloor l_k^{c_k}\rfloor} x\Big)
\quad\text{
converges $\mu$-a.e. on $X$ as $\min\{N_1,\dotsc,N_k\}\to\infty$,}
\]
where $c_1,\dotsc,c_k\in(1,6/5)$ and $\{T_i:\,i\in[k]\}$ is a family of commuting and invertible $\mu$-invariant transformations.

Using an abstract multi-parameter oscillation result from \cite{MOE}, we reduce the task of proving the above theorem to showing the following useful quantitative uniform estimates, which may  be of independent interest.

\begin{theorem}[Uniform 2-oscillation and vector-valued maximal estimates]\label{OM}Assume $c_1\in[1,2)$, $c_2\in[1,6/5)$ and $B$ as in the introduction. Assume $(X,\mathcal{B},\mu)$ is a $\sigma$-finite measure space and $T$ is an invertible $\mu$-invariant transformation. Let
\[
M_{B,t}f(x)=\frac{1}{|B\cap [1,t]|}\sum_{n\in B\cap [1,t]}f(T^nx)
\]
Then for any $p\in (1,\infty)$, there exists a constant $C_p$ such that 
\begin{equation}\label{U2OE}
\sup_{J\in\mathbb{N}}\sup_{I\in\mathfrak{S}_J(\mathbb{N})}\|O^2_{I,J}(M_{B,t}f:t\in\mathbb{N})\|_{L_{\mu}^p(X)}\le C_p\|f\|_{L_{\mu}^p(X)}\text{  for any $f\in L_{\mu}^p(X)$}
\end{equation}
and such that  
\begin{equation}\label{VVMFE}
\Big\|\Big(\sum_{j\in\mathbb{Z}}\big(\sup_{t\in\mathbb{N}}M_{B,t}|f_j|\big)^2\Big)^{1/2}\Big\|_{L_{\mu}^p(X)}\le C_p\Big\|\big(\sum_{j\in\mathbb{Z}}|f_j|^2\big)^{1/2}\Big\|_{L_{\mu}^p(X)}\text{ for any $(f_j)_{j\in\mathbb{Z}}\in L_{\mu}^p\big(\mathbb{Z};\ell^2(\mathbb{Z})\big)$}\text{.}
\end{equation}
\end{theorem}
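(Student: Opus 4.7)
By Calder\'on's transference principle, it suffices to prove both estimates for the shift system on $\mathbb{Z}$, i.e.\ for the averages $\mathcal{M}_{B,t}f(x) = |B\cap[1,t]|^{-1}\sum_{n\in B\cap[1,t]} f(x-n)$. I would then pass from continuous $t\in\mathbb{N}$ to the smooth dyadic family $\{K_{2^k}*f\}_{k\ge 0}$ built in Remark~\ref{SDef}: the density asymptotics $|B\cap[1,t]|\simeq\varphi_2(t)$ and monotonicity of the windows $\eta(n/2^k)$ allow one to sandwich $\mathcal{M}_{B,t}$ between two consecutive dyadic convolutions up to bounded error, so both the 2-oscillation seminorm and the vector-valued norm over $t\in\mathbb{N}$ reduce, modulo an easy $\ell^p$-bounded remainder, to the same quantities over the dyadic scales $k\in\mathbb{N}_0$.

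The heart of the argument is a Fourier-side decomposition
\[
\widehat{K_N}(\xi) = \Phi_N(\xi) + E_N(\xi),
\]
where $\Phi_N$ models a continuous integral against the slowly varying density $\varphi_2'$ on the interval $[N,4N]$, while $E_N$ is the genuinely oscillatory error coming from the fractional-part condition defining $B$. Using Van der Corput's inequality applied to the phase $\varphi_1$ with weights built from $\psi\sim\varphi_2'$, via the exponential sum estimates in \cite{W11} and their extensions developed in \cite{RTP}, I would establish a quantitative decay $|E_N(\xi)|\lesssim (\log N)^{-\delta}$ for some $\delta>0$, uniformly in $\xi\in\mathbb{T}$, in the enlarged range $c_1\in[1,2)$, $c_2\in[1,6/5)$. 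Simultaneously, $\Phi_N$ is essentially a Hardy--Littlewood type average (after the change of variables induced by $\varphi_2$), so its oscillation theory and vector-valued theory are classical.

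With these ingredients I would follow the oscillation template of \cite{MOE}: decompose $O^2_{I,J}$ into long oscillations across widely separated scales and short oscillations inside each dyadic block. The short oscillations are handled by a Rademacher--Menshov argument combined with a square function bound at $p=2$ coming directly from the decay of $E_N$ plus the known oscillation inequality for the model $\Phi_N$; the long oscillations are obtained by interpolating this $\ell^2$ square-function bound against the $\ell^p$ maximal inequality for $\mathcal{M}_B$, which in the range $c_2<6/5$ is available for all $p\in(1,\infty)$ without appealing to weak-$(1,1)$. The vector-valued estimate \eqref{VVMFE} is built in a parallel fashion: at $p=2$ it reduces to a Plancherel computation splitting each $\widehat{K_{2^k}}$ as $\Phi_{2^k}+E_{2^k}$ and summing the errors using the $\delta$-decay, while for general $p$ one invokes Marcinkiewicz-type interpolation with the scalar $\ell^p$ bound together with the Fefferman--Stein vector-valued extension of the Hardy--Littlewood maximal operator applied to $\Phi_{2^k}$.

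The main obstacle I anticipate is establishing the Fourier decay of $E_N$ \emph{uniformly} in $\xi$ when $\varphi_1\not\simeq\varphi_2$: in that regime $B$ may contain arbitrarily long blocks of consecutive integers, stationary-phase methods degenerate, and the Van der Corput estimates must be balanced against trivial bounds across several regimes of the ratio $\varphi_2'(x)/\varphi_1'(x)$. Once this decay is obtained with a polylogarithmic gain, the oscillation and vector-valued assemblies above are comparatively routine, and the broader range $c_2<6/5$ here versus $c_1<30/29$ in Theorem~\ref{WT11} reflects precisely the fact that $L^p$ methods can tolerate a weaker gain in $E_N$ than what the refined Calder\'on--Zygmund decomposition underlying the weak-$(1,1)$ bound requires.
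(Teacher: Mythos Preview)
Your proposal has the right overall architecture---transference to $\mathbb{Z}$, a Fourier comparison of $K_N$ with a smooth model, and reduction to the classical Hardy--Littlewood theory---but there are two genuine gaps.

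First, the decay you assert for the error, $|E_N(\xi)|\lesssim(\log N)^{-\delta}$, is both weaker than what is actually available and insufficient for your scheme. Lemma~\ref{TrEst} (from \cite{HLMP}) gives a \emph{polynomial} gain $N^{-\chi}$ uniformly in $\xi$, and this is precisely what makes the square function over dyadic scales summable: with only $(\log N)^{-\delta}$ at $N=2^k$ you would face $\sum_k k^{-2\delta}$, which diverges unless $\delta>1/2$, and your Rademacher--Menshov step would need even more. The polynomial decay is already established in the range $c_1\in[1,2)$, $c_2\in[1,6/5)$ without any assumption $\varphi_1\simeq\varphi_2$, so the ``main obstacle'' you describe is not an obstacle here; the difficulty with $\varphi_1\not\simeq\varphi_2$ only enters the weak-type analysis of Section~\ref{WT11Inh}, not this theorem.

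Second, your treatment of the model piece $\Phi_N$ is too casual. After the Fourier comparison the model operator is not Hardy--Littlewood but the $\psi$-weighted average $D_t f(x)=\big(\sum_{s\le t}\psi(s)\big)^{-1}\sum_{s\le t}\psi(s)f(x-s)$, and ``change of variables induced by $\varphi_2$'' does not transfer 2-oscillation bounds from $H_t$ to $D_t$: oscillations are not positive or monotone in the kernel, so the sandwiching you invoke for the dyadic reduction fails for $O^2_{I,J}$. The paper handles this by summation by parts, writing $D_kf=\sum_{s\ge1}\lambda_s^k H_sf$ with nonnegative coefficients satisfying $\sum_s\lambda_s^k=1$ and a monotonicity in $k$ of the partial sums, and then using the integral representation $D_kf=\int_0^1 H_{N_k(t)}f\,dt$ (the device from \cite{UA}) to push $O^2_{I,J}$ inside the integral and invoke the known uniform oscillation inequality for $H_m$. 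The short oscillations are handled not by Rademacher--Menshov but by choosing sub-dyadic scales $\mathbb{D}_\tau=\{2^{n^\tau}\}$ with small $\tau$ and controlling the $V^1$-variation on each block $[2^{n^\tau},2^{(n+1)^\tau})$ by a direct kernel $\ell^1$-difference computation. For the vector-valued estimate \eqref{VVMFE} your outline is closer to what the paper does, except that again the comparison of $A_{2^n}$ with Hardy--Littlewood is carried out \emph{pointwise} (showing $\sup_n A_{2^n}|f|\lesssim\sup_n H_{2^n}|f|$ by exploiting that $\psi$ is essentially constant on dyadic blocks), not via a change of variables on the Fourier side.
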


We briefly note that for any $Y\subseteq X\subseteq \mathbb{R}$, with $|Y|>2$, we have that 
\[
\mathfrak{S}_J(X)=\{\{I_0<\dots<I_J\}\subseteq X\}\text{ and }O^2_{I,J}(a_t(x):\,t\in Y)=\Big(\sum_{j=0}^{J-1}\sup_{t\in [I_j,I_{j+1})\cap Y}|a_t(x)-a_{I_j}(x)|^2 \Big)^{1/2}\text{.}
\]
For the definition of multi-parameter oscillations as well as the basic properties of oscillations we refer the reader to section~2 from \cite{MOE}.

We now comment on the proof of Theorem~$\ref{OM}$. Again, Calder\'on's transference principle suggests that it suffices to establish these estimates for the integer shift system. Ultimately, those estimates are derived from the analogous ones for the standard discrete Hardy--Littlewood averaging operator. For the vector-valued maximal inequality we use the exponential sum estimates of Lemma~$\ref{TrEst}$ together with the fact that $\psi$ behaves ``like a constant'' in dyadic blocks in order to eventually be able to use the corresponding estimates for the Hardy--Littlewood averaging operator (for example see Theorem~1 in \cite{SF} or Theorem~C in \cite{MVV}). The situation is much more complicated for the oscillations. We follow the strategy from \cite{JI} and \cite{PZ}, and we break our analysis into short and long oscillations, and instead of opting to handle as our ``long oscillations'' the rather natural choice $\{2^n:\,n\in\mathbb{N}_0\}$, we choose a much denser set, namely on $\{\lfloor 2^{n^{\tau}}\rfloor:\,n\in\mathbb{N}_0\}$, for $\tau$ small. This affords us to bound the short oscillations straightforwardly. Loosely speaking, the long oscillations are treated in a similar manner to the vector-valued maximal inequality, but the fact that the 2-oscillations are not a positive operator makes the use of the fact that $\psi$ behaves nicely in dyadic blocks difficult. Here, we adapt the argument from section~5 in \cite{UA} to our oscillation setting in order to compare averages with different weights. Again, we use the uniform oscillation estimates for the discrete Hardy--Littlewood averaging operator to conclude (which one may find for example in \cite{UOI} or \cite{UOIAO}). Finally, we mention that the exponential sum estimates help us understand some error terms on $\ell^2$, and Riesz--Thorin interpolation together with trivial bounds coming from the fact that we deal with averaging operators help us establish the corresponding $\ell^p$ bounds.
  
\subsection{Notation}
We denote by $C$ a positive constant that may change from occurrence to occurrence. If $A,B$ are two non-negative quantities, we write $A\lesssim B$ or $B \gtrsim A$  to denote that there exists a positive constant $C$ such that $A\le C B$. Whenever $A\lesssim B$ and $A\gtrsim B$ we write $A\simeq B$. For two complex-valued functions $f,g$ we write $f\sim g$ to denote that $\lim_{x\to\infty}\frac{f(x)}{g(x)}=1$. We denote the average value of a function  $f\colon\mathbb{Z}\to\mathbb{C}$ over a finite set $Q\subseteq \mathbb{Z}$ by $[f]_Q=\frac{1}{|Q|}\sum_{x\in Q}f(x)$. For any natural number $N$, we let $[N]=\{1,2,\dotsc,N\}$.

\section{Basic Properties of the sets $B$} 
In this section we collect some useful properties of the sets $B$. We begin by stating an exponential sum estimate proven in \cite{HLMP}. Here we fix two constants $c_1,c_2$ such that $c_1\in[1,2)$ and $c_2\in[1,6/5)$, as well as $h_1$, $h_2$ in $\mathcal{R}_{c_1}$ and $\mathcal{R}_{c_2}$ respectively and $\psi$, $\gamma_1=1/c_1$, $\gamma_2=1/c_2$ as in the introduction and all the implied constants may depend on them. We note that we use the basic properties of those functions as described in Lemma~2.6  and Lemma~2.14 from \cite{W11} without further mention.

\begin{lemma}\label{TrEst}
Assume $\chi>0$ is such that $(1-\gamma_1)+3(1-\gamma_2)+6\chi<1$. Then there exists a real number $\chi'>0$ such that
\begin{equation}\label{EXT}
\sum_{n\in B\cap[N]} \psi(n)^{-1} e^{2\pi i n\xi}= \sum_{n\in [N]} e^{2\pi i n\xi} +O(N^{1-\chi-\chi'})
\end{equation}
as well as
\begin{equation}\label{EXT2}
\sum_{n\in B\cap [N]}e^{2\pi i n\xi}= \sum_{n\in [N]}\psi(n) e^{2\pi i n\xi} +O(\varphi_2(N)N^{-\chi-\chi'})
\end{equation}
where the implied constant does not depend on $\xi$ or $N$.
\end{lemma}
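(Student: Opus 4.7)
The plan is to isolate the main term from the indicator $\ind{B}(n)$ via a floor-function identity, Fourier-expand the resulting sawtooth pieces using Vaaler's approximation, and then estimate the arising exponential sums along $\varphi_1$ by iterated Van der Corput inequalities, essentially following Section~3 of \cite{W11} and Lemma~3.1 of \cite{RTP}. Since $\psi(n)\in(0,1/2]$, we have the identity
\[
\ind{B}(n)=\lfloor \varphi_1(n)\rfloor-\lfloor \varphi_1(n)-\psi(n)\rfloor=\psi(n)+\bigl(\{\varphi_1(n)-\psi(n)\}-\{\varphi_1(n)\}\bigr),
\]
which separates the expected density $\psi(n)$ from an oscillatory remainder. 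Substituting into the left-hand sides of (\ref{EXT}) and (\ref{EXT2}) produces the stated main terms $\sum_{n\le N}e^{2\pi i n\xi}$ and $\sum_{n\le N}\psi(n)e^{2\pi i n\xi}$ respectively, and reduces the lemma to showing that
\[
E_w(\xi)=\sum_{n\le N}w(n)\bigl(\{\varphi_1(n)-\psi(n)\}-\{\varphi_1(n)\}\bigr)e^{2\pi i n\xi}
\]
is $O(N^{1-\chi-\chi'})$ for $w(n)=1/\psi(n)\simeq n^{1-\gamma_2}$, and $O(\varphi_2(N) N^{-\chi-\chi'})$ for $w\equiv 1$, with implied constants independent of $\xi$.

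Next I would invoke Vaaler's trigonometric polynomial approximation of $\{y\}-1/2$ of degree $H$: its remainder is dominated by a nonnegative Fejér-type kernel with $L^1$-mass $O(1/H)$, while the polynomial part is a Fourier sum $\sum_{0<|h|\le H} c_h e^{2\pi i h y}$ with $|c_h|\lesssim 1/|h|$. Applied at $y=\varphi_1(n)$ and $y=\varphi_1(n)-\psi(n)$, the sawtooth difference in $E_w(\xi)$ becomes a $1/|h|$-weighted combination of twisted exponential sums
\[
S_h(\xi)=\sum_{n\le N}w(n)\bigl(1-e^{-2\pi i h\psi(n)}\bigr)e^{2\pi i (h\varphi_1(n)+n\xi)},
\]
plus a total truncation error of size $O\bigl(H^{-1}\sum_{n\le N}w(n)\bigr)$. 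Choosing $H$ as a small positive power of $N$ renders the truncation below the target, at the cost of needing to control $S_h(\xi)$ uniformly for $|h|\le H$.

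Finally, each $S_h(\xi)$ is bounded by Van der Corput's inequality after dyadic localization to scales $n\simeq M$ on which $w$ and $\psi$ are essentially constant. The small factor $|1-e^{-2\pi i h\psi(n)}|\lesssim h\psi(n)\simeq h M^{-(1-\gamma_2)}$ is crucial, as it provides one gain of $\psi$ that will later combine with the $\sum \psi$-mass accounting. The phase $h\varphi_1(n)+n\xi$ has controlled derivative behaviour from $\varphi_1\in\mathcal{R}_{c_1}$, and several iterations of the Van der Corput $A$-process on each dyadic block produce a power saving that, when summed over $|h|\le H$ against $1/|h|$, yields the required bound precisely under the hypothesis $(1-\gamma_1)+3(1-\gamma_2)+6\chi<1$; the three summands track respectively the derivative losses from $\varphi_1$, the combined contribution of the weight $w$ and of the factor $\psi$ from the differenced sawtooth, and six differencing steps combined with the range of $h$. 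I expect the main obstacle to be keeping the Van der Corput bounds uniform in $\xi$ while simultaneously handling the slowly varying factors $\ell\in\mathcal{L}$ hidden inside $\varphi_1$, $\varphi_2$ and $\psi$ — exactly the technical bookkeeping addressed in \cite{W11} and \cite{RTP}, which I would cite and adapt directly rather than rederive.
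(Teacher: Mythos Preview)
The paper does not actually prove this lemma; it simply observes that (\ref{EXT2}) follows from (\ref{EXT}) by summation by parts and cites \cite{HLMP} for the rest. Your outline is precisely the method one finds in that reference and in the related arguments of \cite{W11} and \cite{RTP}, so the approach is correct and matches the cited literature. One small caveat: the coefficient $6$ in front of $\chi$ does not correspond to six Van der Corput differencings --- in the actual argument only a single application of the Weyl--Van der Corput inequality (second or third derivative test) is used on each dyadic block, and the $6\chi$ arises from balancing the truncation height $H$ against the Van der Corput savings and the sizes of $w$ and $\psi$; this does not affect the validity of your strategy, only the heuristic gloss.
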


\begin{proof} We note that one is derived from the other using summation by parts. The proof can be found in page 6 as well as in Lemma~3.2 in \cite{HLMP}. 
\end{proof}

\begin{lemma}\label{BVBI}
If $\varphi_2\lesssim \varphi_1$, then $B$ does not contain arbitrarily long intervals in $\mathbb{Z}$.
\end{lemma}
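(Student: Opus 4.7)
The plan is to prove the contrapositive in a quantitative form: assuming $\varphi_2\lesssim \varphi_1$, I show that there is an absolute constant $K$ such that every interval of integers contained in $B$ has length at most $K$ (the finitely many small starting points are harmless).

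The decisive quantitative input is $\psi(n)\lesssim \varphi_1'(n)$. Writing $h_i(x)=x^{c_i}\ell_i(x)$ with $\ell_i\in\mathcal{L}$, the defining relation $\ell_i'(x)=\ell_i(x)\vartheta_i(x)/x$ with $\vartheta_i(x)\to 0$ yields $h_i'(x)\sim c_i h_i(x)/x$. By the inverse function identity $\varphi_i'(x)=1/h_i'(\varphi_i(x))$, this gives $\varphi_i'(x)\sim \gamma_i\varphi_i(x)/x$. Hence
\[
\frac{\varphi_2'(x)}{\varphi_1'(x)}\simeq \frac{\varphi_2(x)}{\varphi_1(x)}\lesssim 1,
\]
and since $\psi\sim\varphi_2'$, we conclude $\psi(n)\le M\varphi_1'(n)$ for some constant $M$ and all sufficiently large $n$.

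Now suppose $[n_0,n_0+L]\subseteq B$ for some large $n_0$. By passing to the sub-interval $[n_0+\lfloor L/2\rfloor,n_0+L]$ I may assume that $L\le n_0$ at the cost of replacing $L$ by $\lceil L/2\rceil$. For $0\le k\le L-1$, the mean value theorem gives $\delta_k\coloneqq\varphi_1(n_0+k+1)-\varphi_1(n_0+k)=\varphi_1'(\xi_k)$ for some $\xi_k\in(n_0+k,n_0+k+1)$; since $\varphi_1'$ is decreasing and doubling and $n_0+k\le 2n_0$, this yields $\delta_k\simeq \varphi_1'(n_0)$. For $n_0$ large enough, $\delta_k<1/2$, and because also $\{\varphi_1(n_0+k)\}<\psi(n_0+k)\le 1/2$, we have $\{\varphi_1(n_0+k)\}+\delta_k<1$, so no integer is crossed when passing from $n_0+k$ to $n_0+k+1$. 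Inductively,
\[
\{\varphi_1(n_0+k)\}=\{\varphi_1(n_0)\}+\sum_{j=0}^{k-1}\delta_j\ge k\kappa\,\varphi_1'(n_0)
\]
for some constant $\kappa>0$. On the other hand, $n_0+k\in B$ combined with the key inequality and the monotonicity of $\varphi_1'$ gives $\{\varphi_1(n_0+k)\}<M\varphi_1'(n_0+k)\le M\varphi_1'(n_0)$. Comparing the two bounds yields $k\le M/\kappa$, and therefore $L\lesssim 1$.

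The main obstacle is really the comparison $\psi\lesssim \varphi_1'$, since it is the one step that uses the structural hypothesis $\varphi_2\lesssim\varphi_1$; once it is in hand, the rest is a transparent equidistribution argument reflecting the heuristic that, with window size $\psi(n_0)$ and step size $\varphi_1'(n_0)$ of the same order, the orbit $\{\varphi_1(n_0+k)\}$ can remain in $[0,\psi(n_0+k))$ for only boundedly many consecutive values of $k$ before overshooting.
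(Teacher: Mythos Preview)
Your proof is correct and follows essentially the same route as the paper's: both arguments first observe that along a block $\{n_0,\dots,n_0+L\}\subseteq B$ the integer part $\lfloor\varphi_1\rfloor$ cannot change (you phrase this as ``no integer is crossed'' via $\{\varphi_1(n_0+k)\}+\delta_k<1$, the paper as separation of the fibres $B_m=\{n:\lfloor\varphi_1(n)\rfloor=m\}$), and then compare the accumulated increment $\sum_{j<k}\delta_j\simeq k\varphi_1'(n_0)$ against the window $\psi\lesssim\varphi_2'\lesssim\varphi_1'$ to force $k\lesssim 1$. The only cosmetic difference is that you front-load the key comparison $\psi\lesssim\varphi_1'$ (via $\varphi_i'\sim\gamma_i\varphi_i/x$), whereas the paper defers this to its final line $(l-1)\lesssim\varphi_2'/\varphi_1'\lesssim\varphi_2/\varphi_1\lesssim 1$.
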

\begin{proof}
Let us assume that $\varphi_2\lesssim \varphi_1$ and $\{n,n+1,\dotsc,n+l-1\}\subseteq B$. We wish to bound $l$. Let us notice that $B$ may be partitioned as follows
\[
B=\bigcup_{m\in\mathbb{N}}B_m
\]where $B_m=\{\,n\in\mathbb{N}\,:\,0\le \varphi_1(n)-m<\psi(n)\,\}$, note that $B_m\cap B_k=\emptyset$ for $m\neq k$. For sufficiently large $m,k$ with $m<k$, we have that $dist(B_m,B_k)\ge 2$ since if we assume for the sake of a contradiction that $dist(B_m,B_k)=1$ then there exists $n\in B_m$ such that and $n+1\in B_k$. But then
\[
0\le \varphi_1(n)-m<\psi(n)\text{ and }0\le \varphi_1(n+1)-k<\psi(n+1)
\]
and thus
\[
(k-m)-\psi(n)<\varphi_1(n+1)-\varphi_1(n)<(k-m)+\psi(n+1)
\]
thus 
\[
\varphi_1(n+1)-\varphi_1(n)>1/2
\]
and by the Mean Value Theorem there exists $\xi_n\in (n,n+1)$ such that $\varphi_1'(\xi_n)>1/2$, which will be a contradiction for sufficiently large $n$, since $\varphi_1'(x)\to 0$, as $x\to\infty$. Thus, if we ignore some first few terms of the set $B$, then the fact that $\{n,n+1,\dotsc,n+l-1\}\subseteq B$, together with our previous observation, imply that there exists an $m\in\mathbb{N}$ such that $\{n,n+1,\dotsc,n+l-1\}\subseteq B_m$. Combining $n,n+l-1\in B_m$, we obtain
\[
\varphi_1(n+l-1)-\varphi_1(n)<\psi(n+l)
\]
which by the Mean Value Theorem becomes
\[
(l-1)\varphi_1'(n+l)\le (l-1)\varphi_1'(\xi_{n,l})<\psi(n+l)\lesssim \varphi_2'(n+l) 
\]
and thus $(l-1)\lesssim \frac{\varphi_2'(n+l)}{\varphi_1'(n+l)}\lesssim \frac{\varphi_2(n+l)}{\varphi_1(n+l)}\lesssim 1$, since $\varphi_2\lesssim \varphi_1$. Thus the set $B$ does not contain arbitrarily long intervals. 
\end{proof}
\section{Uniform 2-oscillation Estimates}Here we wish to prove the first half of Proposition~$\ref{OM}$, namely to establish the estimate $(\ref{U2OE})$. Similarly to the previous section, we fix $B$ as in the introduction with $(c_1,c_2)\in[1,2)\times[1,6/5)$ and all the implied constants may depend on them. By the Calder\'on Transference Principle, in order  to establish that estimate for any $\sigma$-finite measure preserving system, it suffices to establish it for the integer shift system, namely for $(\mathbb{Z},\mathcal{P}(\mathbb{Z}),|\cdot|,S)$ where $S$ is the shift map $S(x)=x-1$ and the $|\cdot|$ is the counting measure. To simplify the notation, we let $B_t=B\cap [1,t]$ and write
\[
M_{t}f(x)=\frac{1}{|B_t|}\sum_{n\in B_t}f(x-n)
\]
 We therefore wish to show that for any $p\in(1,\infty)$, there exists a constant $C_p$ such that
 
\begin{equation}\label{U2OEI}
\sup_{J\in\mathbb{N}_0}\sup_{I\in\mathfrak{S}_J(\mathbb{N})}\|O^2_{I,J}(M_tf:t\in\mathbb{N})\|_{\ell^p(\mathbb{Z})}\le C_p\|f\|_{\ell^p(\mathbb{Z})}\text{  for any $f\in \ell^p(\mathbb{Z})$}
\end{equation}
To establish this result, we break the 2-oscillations into short and long ones. To do that, we need to carefully choose some parameters first. Let $p_0\in(1,\infty)$ be such that $p\in(p_0,p_0')$, let $\tau\in(0,\min\big\{\frac{p_0-1}{2},\frac{1}{2}\big\})$ and let $\mathbb{D}_{\tau}=\{2^{n^{\tau}}:\,n\in\mathbb{N}_0\}$. It is not difficult to see that
 \begin{multline}\label{SL}
 \sup_{J\in\mathbb{N}}\sup_{I\in\mathfrak{S}_J(\mathbb{N})}\|O^2_{I,J}(M_{t}f:t\in\mathbb{N})\|_{\ell^p(\mathbb{Z})}\lesssim
 \sup_{J\in\mathbb{N}}\sup_{I\in\mathfrak{S}_J(\mathbb{D}_{\tau})}\|O^2_{I,J}(M_{t}f:t\in\mathbb{D}_{\tau})\|_{\ell^p(\mathbb{Z})}+
 \\
 +\bigg\|\bigg(\sum_{n=0}^{\infty}V^2\Big(M_tf:\,t\in\big[2^{n^{\tau}},2^{(n+1)^{\tau}}\big)\Big)^2\bigg)^{1/2}\bigg\|_{\ell^p(\mathbb{Z})}
 \end{multline}
 where 
 \[
V^2\Big(M_tf(x):\,t\in\big[2^{n^{\tau}},2^{(n+1)^{\tau}}\big)\Big)=\sup_{J\in\mathbb{N}}\sup_{\substack{t_0<\dots<t_J\\t_j\in\big[2^{n^{\tau}},2^{(n+1)^{\tau}}\big)}}\Big(\sum_{j=0}^{J-1}|M_{t_{j+1}}f(x)-M_{t_j}f(x)|^2 \Big)^{1/2}\text{,}
\]
see \cite{MOE} page 17. One may adapt the argument appearing in Lemma~1.3 in \cite{JEFM} to establish this.

We deal with the second term of $(\ref{SL})$. Note that
\[
\bigg\|\bigg(\sum_{n=0}^{\infty}V^2\Big(M_tf:\,t\in\big[2^{n^{\tau}},2^{(n+1)^{\tau}}\big)\Big)^2\bigg)^{1/2}\bigg\|_{\ell^p(\mathbb{Z})}=
\]
\[
\bigg\|\bigg(\sum_{n=0}^{\infty}\bigg(\sup_{J\in\mathbb{N}}\sup_{\substack{t_0<\dots t_J\\t_j\in\big[2^{n^{\tau}},2^{(n+1)^{\tau}}\big)}}\Big(\sum_{j=0}^{J-1}|M_{t_{j+1}}f-M_{t_{j}}f|^2\Big)^{1/2}\bigg)^{2}\bigg)^{1/2}\bigg\|_{\ell^p(\mathbb{Z})}\lesssim
\] 
\[
\bigg\|\bigg(\sum_{n=1}^{\infty}\bigg(\sup_{J\in\mathbb{N}}\sup_{\substack{t_0<\dots t_J\\t_j\in\big[2^{(n-1)^{\tau}},2^{(n+2)^{\tau}}\big)\cap B}}\Big(\sum_{j=0}^{J-1}|M_{t_{j+1}}f-M_{t_{j}}f|\Big)\bigg)^{2}\bigg)^{1/2}\bigg\|_{\ell^p(\mathbb{Z})}
\] 
where we have used the fact that $\|\cdot\|_{\ell^2}\le\|\cdot\|_{\ell^1}$, together with the fact that $M_tf(x)=M_sf(x)$ whenever $B_t=B_s$. For any $n
\in\mathbb{N}$, let $\big\{\beta_{m}^{(n)}:\,m\in\{0,\dotsc,l_n\}\big\}$ be an increasing enumeration of $\big[2^{(n-1)^{\tau}},2^{(n+2)^{\tau}}\big)\cap B$. Then we use the triangular inequality to bound the last expression by 
\[
\bigg\|\bigg(\sum_{n=1}^{\infty}\bigg(\Big(\sum_{m=1}^{l_n}|M_{\beta_{m}^{(n)}}f-M_{\beta_{m-1}^{(n)}}f|\Big)\bigg)^{2}\bigg)^{1/2}\bigg\|_{\ell^p(\mathbb{Z})}
\]
Let $K_t(x)=\frac{1}{|B_t|}\sum_{n\in B_t}\delta_n(x)$, where $\delta_n(x)=1_{\{n\}}(x)$, and note that $M_tf(x)=K_t*f(x)$. Thus we rewrite the expression above as
\[
\bigg\|\bigg(\sum_{n=1}^{\infty}\bigg(\Big(\sum_{m=1}^{l_n}|(K_{\beta_{m}^{(n)}}-K_{\beta_{m-1}^{(n)}})*f|\Big)\bigg)^{2}\bigg)^{1/2}\bigg\|_{\ell^p(\mathbb{Z})}
\] 
We firstly consider the case $p\in(2,\infty)$, we get
\[
\bigg\|\bigg(\sum_{n=0}^{\infty}V^2\Big(M_tf:\,t\in\big[2^{n^{\tau}},2^{(n+1)^{\tau}}\big)\Big)^2\bigg)^{1/2}\bigg\|_{\ell^p(\mathbb{Z})}\lesssim\bigg\|\bigg(\sum_{n=1}^{\infty}\bigg(\Big(\sum_{m=1}^{l_n}|K_{\beta_{m}^{(n)}}-K_{\beta_{m-1}^{(n)}}|*|f|\Big)\bigg)^{2}\bigg)^{1/2}\bigg\|_{\ell^p(\mathbb{Z})}=
\]
\[
\bigg(\sum_{x\in\mathbb{Z}}\bigg(\sum_{n=1}^{\infty}\bigg(\Big(\sum_{m=1}^{l_n}|K_{\beta_{m}^{(n)}}-K_{\beta_{m-1}^{(n)}}|*|f|(x)\Big)\bigg)^{2}\bigg)^{p/2}\bigg)^{1/p}\le
\]
\[
\bigg(\sum_{n=1}^{\infty}\bigg(\sum_{m=1}^{l_n}\Big(\sum_{x\in\mathbb{Z}}\Big(|K_{\beta_{m}^{(n)}}-K_{\beta_{m-1}^{(n)}}|*|f|(x)\Big)^p\Big)^{1/p}\bigg)^{2}\bigg)^{1/2}=
\]
\[
\bigg(\sum_{n=1}^{\infty}\bigg(\sum_{m=1}^{l_n}\big\||K_{\beta_{m}^{(n)}}-K_{\beta_{m-1}^{(n)}}|*|f|\big\|_{\ell^p(\mathbb{Z})}\bigg)^{2}\bigg)^{1/2}\le
\]
\[
\bigg(\sum_{n=1}^{\infty}\bigg(\sum_{m=1}^{l_n}\big\|K_{\beta_{m}^{(n)}}-K_{\beta_{m-1}^{(n)}}\big\|_{\ell^1(\mathbb{Z})}\|f\|_{\ell^p(\mathbb{Z})}\bigg)^{2}\bigg)^{1/2}=\bigg(\sum_{n=1}^{\infty}\bigg(\sum_{m=1}^{l_n}\big\|K_{\beta_{m}^{(n)}}-K_{\beta_{m-1}^{(n)}}\big\|_{\ell^1(\mathbb{Z})}\bigg)^{2}\bigg)^{1/2}\|f\|_{\ell^p(\mathbb{Z})}
\]
where we have used Minkowski's inequality for $p/2>1$ and $p>1$, and then Young's convolution inequality.
In the case where $p\in(1,2]$ we note 
\[
\bigg\|\bigg(\sum_{n=0}^{\infty}V^2\Big(M_tf:\,t\in\big[2^{n^{\tau}},2^{(n+1)^{\tau}}\big)\Big)^2\bigg)^{1/2}\bigg\|_{\ell^p(\mathbb{Z})}\lesssim\bigg\|\bigg(\sum_{n=1}^{\infty}\bigg(\Big(\sum_{m=1}^{l_n}|K_{\beta_{m}^{(n)}}-K_{\beta_{m-1}^{(n)}}|*|f|\Big)\bigg)^{2}\bigg)^{1/2}\bigg\|_{\ell^p(\mathbb{Z})}\le
\]
\[
\bigg\|\bigg(\sum_{n=1}^{\infty}\bigg(\Big(\sum_{m=1}^{l_n}|K_{\beta_{m}^{(n)}}-K_{\beta_{m-1}^{(n)}}|*|f|\Big)\bigg)^{p}\bigg)^{1/p}\bigg\|_{\ell^p(\mathbb{Z})}\le
\]
\[
\bigg(\sum_{x\in\mathbb{Z}}\sum_{n=1}^{\infty}\bigg(\Big(\sum_{m=1}^{l_n}|K_{\beta_{m}^{(n)}}-K_{\beta_{m-1}^{(n)}}|*|f|(x)\Big)\bigg)^{p}\bigg)^{1/p}\le
\]
\[
\bigg(\sum_{n=1}^{\infty}\bigg(\sum_{m=1}^{l_n}\Big(\sum_{x\in\mathbb{Z}}\Big(|K_{\beta_{m}^{(n)}}-K_{\beta_{m-1}^{(n)}}|*|f|(x)\Big)^p\Big)^{1/p}\bigg)^{p}\bigg)^{1/p}=
\]
\[
\bigg(\sum_{n=1}^{\infty}\bigg(\sum_{m=1}^{l_n}\big\||K_{\beta_{m}^{(n)}}-K_{\beta_{m-1}^{(n)}}|*|f|\big\|_{\ell^p(\mathbb{Z})}\bigg)^{p}\bigg)^{1/p}\le
\]
\[
\bigg(\sum_{n=1}^{\infty}\bigg(\sum_{m=1}^{l_n}\big\|K_{\beta_{m}^{(n)}}-K_{\beta_{m-1}^{(n)}}\big\|_{\ell^1(\mathbb{Z})}\|f\|_{\ell^p(\mathbb{Z})}\bigg)^{p}\bigg)^{1/p}=\bigg(\sum_{n=1}^{\infty}\bigg(\sum_{m=1}^{l_n}\big\|K_{\beta_{m}^{(n)}}-K_{\beta_{m-1}^{(n)}}\big\|_{\ell^1(\mathbb{Z})}\bigg)^{p}\bigg)^{1/p}\|f\|_{\ell^p(\mathbb{Z})}
\]
where we have used Minkowski's inequality and Young's convolution inequality.
Combining the two cases gives
\begin{equation}\label{SFEst}
\bigg\|\bigg(\sum_{n=0}^{\infty}V^2\Big(M_tf:\,t\in\big[2^{n^{\tau}},2^{(n+1)^{\tau}}\big)\Big)^2\bigg)^{1/2}\bigg\|_{\ell^p(\mathbb{Z})}\lesssim \bigg(\sum_{n=1}^{\infty}\bigg(\sum_{m=1}^{l_n}\big\|K_{\beta_{m}^{(n)}}-K_{\beta_{m-1}^{(n)}}\big\|_{\ell^1(\mathbb{Z})}\bigg)^{q}\bigg)^{1/q}\|f\|_{\ell^p(\mathbb{Z})}
\end{equation}
where $q=\min\{2,p\}$.
We focus on $\big\|K_{\beta_{m}^{(n)}}-K_{\beta_{m-1}^{(n)}}\big\|_{\ell^1(\mathbb{Z})}$, note that
\[
\big\|K_{\beta_{m}^{(n)}}-K_{\beta_{m-1}^{(n)}}\big\|_{\ell^1(\mathbb{Z})}=\sum_{x\in B_{\beta_{m-1}^{(n)}}}\frac{1}{|B_{\beta_{m-1}^{(n)}}|}-\frac{1}{|B_{\beta_{m}^{(n)}}|}+\sum_{x\in B_{\beta_{m}^{(n)}}\setminus B_{\beta_{m-1}^{(n)}}}\frac{1}{|B_{\beta_{m}^{(n)}}|}=
\]
\[
\frac{|B_{\beta_{m}^{(n)}}|-|B_{\beta_{m-1}^{(n)}}|}{|B_{\beta_{m}^{(n)}}|}+\frac{|B_{\beta_{m}^{(n)}}\setminus B_{\beta_{m-1}^{(n)}}|}{|B_{\beta_{m}^{(n)}}|}=2\frac{|B_{\beta_{m}^{(n)}}|-|B_{\beta_{m-1}^{(n)}}|}{|B_{\beta_{m}^{(n)}}|}
\]
thus
\[
\sum_{m=1}^{l_n}\big\|K_{\beta_{m}^{(n)}}-K_{\beta_{m-1}^{(n)}}\big\|_{\ell^1(\mathbb{Z})}\lesssim \sum_{m=1}^{l_n}\frac{|B_{\beta_{m}^{(n)}}|-|B_{\beta_{m-1}^{(n)}}|}{|B_{\beta_{m}^{(n)}}|}\lesssim\frac{|B_{2^{(n+2)^{\tau}}}|-|B_{2^{(n-1)^{\tau}}}|}{|B_{2^{(n-1)^{\tau}}}|}
\]
We know that there exists $\varepsilon>0$ such that $|B_t|=\varphi_2(t)(1+O(t^{-\varepsilon}))$ (see \cite{HLMP}, page  5), thus we get
\[
\sum_{m=1}^{l_n}\big\|K_{\beta_{m}^{(n)}}-K_{\beta_{m-1}^{(n)}}\big\|_{\ell^1(\mathbb{Z})}\lesssim \frac{\varphi_2(2^{(n+2)^{\tau}})-\varphi_2(2^{(n-1)^{\tau}})+\varphi_2(2^{(n+2)^{\tau}})O(2^{-\varepsilon (n-1)^{\tau}})}{\varphi_2(2^{(n-1)^{\tau}})}
\]
Note that $\frac{\varphi_2(2^{(n+2)^{\tau}})2^{-\varepsilon (n-1)^{\tau}}}{\varphi_2(2^{(n-1)^{\tau}})}\le \frac{\varphi_2(2^{(n-1)^{\tau}+3^{\tau}})2^{-\varepsilon (n-1)^{\tau}}}{\varphi_2(2^{(n-1)^{\tau}})}\lesssim2^{-\varepsilon (n-1)^{\tau}}$, since $0<\tau \le 1/2$, $\varphi_2$ is increasing and $\varphi_2(4x)\lesssim \varphi_2(x)$. We also note that by the Mean Value Theorem for $f(x)=\varphi_2(2^{x^{\tau}})$ on the interval $[n-1,n+2]$ we get that there exists $x_n\in(n-1,n+2)$ such that
\[
\frac{\varphi_2(2^{(n+2)^{\tau}})-\varphi_2(2^{(n-1)^{\tau}})}{3}=\frac{f(n+2)-f(n-1)}{3}=f'(x_n)=\varphi_2'(2^{x_n^{\tau}})2^{x_n^{\tau}}\log(2)\tau x_n^{\tau-1}\lesssim
\]
\[
\varphi_2(2^{x_n^{\tau}}) (n-1)^{\tau-1}\le \varphi_2(2^{(n+2)^{\tau}}) (n-1)^{\tau-1} \le \varphi_2(2^{(n-1)^{\tau}+3^{\tau}})(n-1)^{\tau-1} \lesssim \varphi_2(2^{(n-1)^{\tau}})(n-1)^{\tau-1}
\]
Thus 
\[
\sum_{m=1}^{l_n}\big\|K_{\beta_{m}^{(n)}}-K_{\beta_{m-1}^{(n)}}\big\|_{\ell^1(\mathbb{Z})}\lesssim (n-1)^{\tau-1}+2^{-\varepsilon (n-1)^{\tau}}
\]
which implies
\[
\bigg(\sum_{n=1}^{\infty}\bigg(\sum_{m=1}^{l_n}\big\|K_{\beta_{m}^{(n)}}-K_{\beta_{m-1}^{(n)}}\big\|_{\ell^1(\mathbb{Z})}\bigg)^{q}\bigg)^{1/q}\lesssim 
\]
\[
1+\bigg(\sum_{n=2}^{\infty}(n-1)^{q(\tau-1)}\bigg)^{1/q}+\bigg(\sum_{n=1}^{\infty}2^{-\varepsilon q(n-1)^{\tau}}\bigg)^{1/q}
\]
Note that if $p>2$ then $q=2$ and then  $q(1-\tau)>2(1-1/2)=1$ thus the first sum converges. Similarly, if $p\le 2$, then $q=p$. Note that $\tau<(p_0-1)/2<(p_0-1)/p_0=1/p_0'$, but then $q(1-\tau)>p(1-1/p_0')=p/p_0>1$ as desired. In either case, the first series is summable. The second series is also summable, since for example $2^{-q\varepsilon n^{\tau}}\lesssim_{p,\tau} n^{-2}$, and thus ($\ref{SFEst}$) becomes
\[
\bigg\|\bigg(\sum_{n=0}^{\infty}V^2\Big(M_tf:\,t\in\big[2^{n^{\tau}},2^{(n+1)^{\tau}}\big)\Big)^2\bigg)^{1/2}\bigg\|_{\ell^p(\mathbb{Z})}\lesssim_{p,\tau}\|f\|_{\ell^p(\mathbb{Z})}
\]

To establish the desired estimate, it remains to bound the first term of ($\ref{SL}$). Note that
\[ \sup_{J\in\mathbb{N}}\sup_{I\in\mathfrak{S}_J(\mathbb{D}_{\tau})}\|O^2_{I,J}(M_{t}f:t\in\mathbb{D}_{\tau})\|_{\ell^p(\mathbb{Z})}=
\sup_{J\in\mathbb{N}_0}\sup_{I\in \mathfrak{S}_J(\mathbb{N}_0)}\|O^2_{I,J}\big(M_{2^{n^{\tau}}}f:\,n\in\mathbb{N}_0\big)\|_{\ell^p(\mathbb{Z})}\text{.}
\]
We wish to show
\begin{equation}\label{LOBLP}
\sup_{J\in\mathbb{N}}\sup_{I\in \mathfrak{S}_J(\mathbb{N}_0)}\|O^2_{I,J}\big(M_{2^{n^{\tau}}}f:\,n\in\mathbb{N}_0\big)\|_{\ell^p(\mathbb{Z})}
\lesssim_{p,\tau} \|f\|_{\ell^p(\mathbb{Z})}
\end{equation}

\begin{proof}[Proof of the estimate $(\ref{LOBLP})$]We introduce some auxiliary averaging operators. Let
\[
H_{t}f(x)=\frac{1}{t}\sum_{1\le s\le t}f(x-s)\text{ and }A_{t}f(x)=\frac{1}{|B_t|}\sum_{1\le s\le t}\psi(s)f(x-s)=(L_t*f)(x)
\]
where $L_t(x)=\frac{1}{|B_t|}\sum_{1\le s\le t}\psi(s)\delta_{s}(x)$.
We may compare $M_t$ with $A_t$ as follows
\[
\sup_{J\in\mathbb{N}_0}\sup_{I\in \mathfrak{S}_J(\mathbb{N}_0)}\|O^2_{I,J}\big(M_{2^{n^{\tau}}}f:\,n\in\mathbb{N}_0\big)\|_{\ell^p(\mathbb{Z})}\le
\]
\[
\sup_{J\in\mathbb{N}_0}\sup_{I\in \mathfrak{S}_J(\mathbb{N}_0)}\|O^2_{I,J}\big(M_{2^{n^{\tau}}}f-A_{2^{n^{\tau}}}f:\,n\in\mathbb{N}_0\big)\|_{\ell^p(\mathbb{Z})}+\sup_{J\in\mathbb{N}_0}\sup_{I\in \mathfrak{S}_J(\mathbb{N}_0)}\|O^2_{I,J}\big(A_{2^{n^{\tau}}}f:\,n\in\mathbb{N}_0\big)\|_{\ell^p(\mathbb{Z})}\lesssim
\]
\begin{equation}\label{2terms}
\Big\|\Big(\sum_{n\in\mathbb{N}_0}|M_{2^{n^{\tau}}}f-A_{2^{n^{\tau}}}f|^2\Big)^{1/2} \Big\|_{\ell^p(\mathbb{Z})}+\sup_{J\in\mathbb{N}_0}\sup_{I\in \mathfrak{S}_J(\mathbb{N}_0)}\|O^2_{I,J}\big(A_{2^{n^{\tau}}}f:\,n\in\mathbb{N}_0\big)\|_{\ell^p(\mathbb{Z})}
\end{equation}
The first term of the expression ($\ref{2terms}$) will be bounded using the Lemma~$\ref{TrEst}$ and interpolation. More specifically, we start with $p=2$, and we note
\[
\Big\|\Big(\sum_{n\in\mathbb{N}_0}|M_{2^{n^{\tau}}}f-A_{2^{n^{\tau}}}f|^2\Big)^{1/2} \Big\|_{\ell^2(\mathbb{Z})}=\Big(\sum_{n\in\mathbb{N}_0}\|M_{2^{n^{\tau}}}f-A_{2^{n^{\tau}}}f\|^2_{\ell^2(\mathbb{Z})} \Big)^{1/2}
\] 
and for each $n\in\mathbb{N}_0$, we have
\[
\|M_{2^{n^{\tau}}}f-A_{2^{n^{\tau}}}f\|_{\ell^2(\mathbb{Z})}=\|(K_{2^{n^{\tau}}}-L_{2^{n^{\tau}}})*f\|_{\ell^2(\mathbb{Z})}=\|(\widehat{K}_{2^{n^{\tau}}}-\widehat{L}_{2^{n^{\tau}}})\widehat{f}\|_{L^2(\mathbb{T})}\le \|\widehat{K}_{2^{n^{\tau}}}-\widehat{L}_{2^{n^{\tau}}}\|_{L^{\infty}(\mathbb{T})}\|f\|_{\ell^2(\mathbb{Z})}
\]
 and note that there exists $\chi>0$ such that for any $\xi\in\mathbb{T}$ we get
 \[
 |\widehat{K}_{2^{n^{\tau}}}(\xi)-\widehat{L}_{2^{n^{\tau}}}(\xi)|=\bigg| \frac{1}{|B_{2^{n^{\tau}}}|}\sum_{s\in B_{2^{n^{\tau}}}}e^{2\pi i s\xi}-\frac{1}{|B_{2^{n^{\tau}}}|}\sum_{1\le s\le 2^{n^{\tau}}}\psi(s) e^{2\pi i s\xi}\bigg|\lesssim\frac{\varphi_2(2^{n^{\tau}})2^{-\chi n^{\tau}}}{|B_{2^{n^{\tau}}}|}\lesssim 2^{-\chi n^{\tau}}
\]
and thus
\[
\|M_{2^{n^{\tau}}}f-A_{2^{n^{\tau}}}f\|_{\ell^2(\mathbb{Z})}\lesssim 2^{-\chi n^\tau}\|f\|_{\ell^2(\mathbb{Z})}
\]
which gives
\[
\Big\|\Big(\sum_{n\in\mathbb{N}_0}|M_{2^{n^{\tau}}}f-A_{2^{n^{\tau}}}f|^2\Big)^{1/2} \Big\|_{\ell^2(\mathbb{Z})}\lesssim \Big(\sum_{n\in\mathbb{N}_0}2^{-2\chi n^{\tau}}\Big)^{1/2}\|f\|_{\ell^2(\mathbb{Z})}\lesssim_{\tau}\|f\|_{\ell^2(\mathbb{Z})}
\]
For the case of $p\neq 2$, firstly, let us assume that $p\in(2,\infty)$. Note that there exists a positive constant $C$ such that
\[
\|M_{2^{n^{\tau}}}f-A_{2^{n^{\tau}}}f\|_{\ell^{p_0'}(\mathbb{Z})}\le 
\|M_{2^{n^{\tau}}}f\|_{\ell^{p_0'}(\mathbb{Z})}+\|A_{2^{n^{\tau}}}f\|_{\ell^{p_0'}(\mathbb{Z})}\le C\|f\|_{\ell^{p_0'}(\mathbb{Z})}
\] since
\[
\bigg\|\frac{1}{|B_t|}\sum_{s\in B_t}f(\cdot-s)\bigg\|_{\ell^{p_0'}(\mathbb{Z})}\le \|f\|_{\ell^{p_0'}(\mathbb{Z})}\text{ and}
\]
\[
\bigg\|\frac{1}{|B_t|}\sum_{1\le s\le t}\psi(s)f(\cdot-s)\bigg\|_{\ell^{p_0'}(\mathbb{Z})}\le \frac{1}{|B_t|}\sum_{1\le s\le t}\psi(s)\|f\|_{\ell^{p_0'}(\mathbb{Z})}\lesssim \|f\|_{\ell^{p_0'}}
\]
where we have used Lemma~$\ref{TrEst}$ for $\xi=0$. We may choose $\theta\in (0,1)$ such that $\frac{1}{p}=\frac{\theta}{2}+\frac{1-\theta}{p_0'}$ and use Riesz--Thorin interpolation theorem. Since for any $n\in\mathbb{N}_0$ we have
\[
\|M_{2^{n^{\tau}}}f-A_{2^{n^{\tau}}}f\|_{\ell^{2}(\mathbb{Z})}\le C2^{-\chi n^{\tau}}\text{ and }\|M_{2^{n^{\tau}}}f-A_{2^{n^{\tau}}}f\|_{\ell^{p_0'}(\mathbb{Z})}\le C\|f\|_{\ell^{p_0'}(\mathbb{Z})}
\]
we interpolate to obtain
\[
\|M_{2^{n^{\tau}}}f-A_{2^{n^{\tau}}}f\|_{\ell^{p}(\mathbb{Z})}\le (C  2^{-\chi n^{\tau}})^{\theta}C^{1-\theta}\|f\|_{\ell^{p}(\mathbb{Z})}=C (2^{-\chi \theta})^{n^{\tau}} \|f\|_{\ell^{p}(\mathbb{Z})}
\]
We now note that since $p/2>1$ we have
\[
\Big\|\Big(\sum_{n\in\mathbb{N}_0}|M_{2^{n^{\tau}}}f-A_{2^{n^{\tau}}}f|^2\Big)^{1/2} \Big\|_{\ell^p(\mathbb{Z})}=\Big(\sum_{x\in\mathbb{Z}}\Big(\sum_{n\in\mathbb{N}_0}|M_{2^{n^{\tau}}}f(x)-A_{2^{n^{\tau}}}f(x)|^2\Big)^{p/2}\Big)^{1/p}\le
\]
\[
\Big(\sum_{n\in\mathbb{N}_0}\Big(\sum_{x\in\mathbb{Z}}|M_{2^{n^{\tau}}}f(x)-A_{2^{n^{\tau}}}f(x)|^{p}\Big)^{2/p}\Big)^{1/2}=\Big(\sum_{n\in\mathbb{N}_0}\|M_{2^{n^{\tau}}}f-A_{2^{n^{\tau}}}f\|_{\ell^{p}(\mathbb{Z})}^2\Big)^{1/2}\le
\]
\[C\Big(\sum_{n\in\mathbb{N}_0}(2^{-2\chi \theta})^{n^{\tau}}\Big)^{1/2}\|f\|_{\ell^{p}(\mathbb{Z})}\le C_{p,\tau}\|f\|_{\ell^{p}(\mathbb{Z})}
\]
The case of $p\in (1,2)$ is similar; we choose $\theta\in (0,1)$ such that $\frac{1}{p}=\frac{\theta}{2}+\frac{1-\theta}{p_0}$. Riesz–Thorin interpolation theorem yields the same estimate as before
\[
\|M_{2^{n^{\tau}}}f-A_{2^{n^{\tau}}}f\|_{\ell^{p}(\mathbb{Z})}\le C (2^{-\chi \theta})^{n^{\tau}} \|f\|_{\ell^{p}(\mathbb{Z})}
\]and we note that
\[
\Big\|\Big(\sum_{n\in\mathbb{N}_0}|M_{2^{n^{\tau}}}f-A_{2^{n^{\tau}}}f|^2\Big)^{1/2} \Big\|_{\ell^p(\mathbb{Z})}\le \Big\|\Big(\sum_{n\in\mathbb{N}_0}|M_{2^{n^{\tau}}}f-A_{2^{n^{\tau}}}f|^p\Big)^{1/p} \Big\|_{\ell^p(\mathbb{Z})}\le
\]
\[
\Big(\sum_{n\in\mathbb{N}_0}\|M_{2^{n^{\tau}}}f-A_{2^{n^{\tau}}}f\|_{\ell^p(\mathbb{Z})}^p\Big)^{1/p}\le C\Big(\sum_{n\in\mathbb{N}_0}(2^{-p\chi\theta })^{n^{\tau}}\Big)^{1/p}\|f\|_{\ell^p(\mathbb{Z})}\le C_{p,\tau}\|f\|_{\ell^p(\mathbb{Z})}
\]
We have appropriately bounded the first term of equation ($\ref{2terms}$). We now focus on the second term. We will reduce the 2-oscillation $\ell^p$ estimates for $A_{2^{n^{\tau}}}$ to the corresponding ones for the $C_{2^{n^{\tau}}}$. Firstly, the analysis of the 2-oscillations will be made easier if we adjust $A_t$ to the following very similar operator 
\[
D_t f(x)=\frac{1}{\sum_{1\le s\le t}\psi(s)}\sum_{1 \le s\le t}\psi(s)f(x-s)
\]
Note that
\[|A_{2^{n^{\tau}}}f(x)-D_{2^{n^{\tau}}}f(x)|=\bigg|\bigg(\frac{1}{|B_{2^{n^{\tau}}}|}-\frac{1}{\sum_{1\le s\le 2^{n^{\tau}}}\psi(s)}\bigg)\sum_{1\le s\le 2^{n^{\tau}}}\psi(s)f(x-s)\bigg|
\]
Thus
\[
\|A_{2^{n^{\tau}}}f-D_{2^{n^{\tau}}}f\|_{\ell^p(\mathbb{Z})}\le \frac{\big|\sum_{1\le s\le 2^{n^{\tau}}}\psi(s)-|B_{2^{n^{\tau}}}|\big|}{|B_{2^{n^{\tau}}}|\sum_{1\le s\le 2^{n^{\tau}}}\psi(s)}\sum_{1\le s\le 2^{n^{\tau}}}\psi(s) \|f\|_{\ell^p(\mathbb{Z})}\lesssim
\]
\[
 \frac{\varphi_2(2^{n^{\tau}})2^{-\chi n^{\tau}}}{|B_{2^{n^{\tau}}}|}\|f\|_{\ell^p(\mathbb{Z})}\lesssim 2^{-\chi n^{\tau}} \|f\|_{\ell^p(\mathbb{Z})}
\]
One may use a similar argument to the one presented earlier to compare $A_{2^{n^{\tau}}}$ and $D_{2^{n^{\tau}}}$. More precisely we have 
\[
\sup_{J\in\mathbb{N}_0}\sup_{I\in \mathfrak{S}_J(\mathbb{N}_0)}\|O^2_{I,J}\big(A_{2^{n^{\tau}}}f:\,n\in\mathbb{N}_0\big)\|_{\ell^p(\mathbb{Z})}\le
\]
\[
\sup_{J\in\mathbb{N}_0}\sup_{I\in \mathfrak{S}_J(\mathbb{N}_0)}\|O^2_{I,J}\big(A_{2^{n^{\tau}}}f-D_{2^{n^{\tau}}}f:\,n\in\mathbb{N}_0\big)\|_{\ell^p(\mathbb{Z})}+\sup_{J\in\mathbb{N}_0}\sup_{I\in \mathfrak{S}_J(\mathbb{N}_0)}\|O^2_{I,J}\big(D_{2^{n^{\tau}}}f:\,n\in\mathbb{N}_0\big)\|_{\ell^p(\mathbb{Z})}\lesssim
\]
\begin{equation}\label{2nts}
\Big\|\Big(\sum_{n\in\mathbb{N}_0}|A_{2^{n^{\tau}}}f-D_{2^{n^{\tau}}}f|^2\Big)^{1/2} \Big\|_{\ell^p(\mathbb{Z})}+\sup_{J\in\mathbb{N}_0}\sup_{I\in \mathfrak{S}_J(\mathbb{N}_0)}\|O^2_{I,J}\big(D_{2^{n^{\tau}}}f:\,n\in\mathbb{N}_0\big)\|_{\ell^p(\mathbb{Z})}
\end{equation}
Using the estimate $\|A_{2^{n^{\tau}}}f-D_{2^{n^{\tau}}}f\|_{\ell^p(\mathbb{Z})}\lesssim 2^{-\chi n^{\tau}} \|f\|_{\ell^p(\mathbb{Z})}$, we can bound the first term of equation ($\ref{2nts}$) by $C_{p,\tau}\|f\|_{\ell^p(\mathbb{Z})}$ and our task has been reduced to estimating the 2-oscillations of $D_{2^{n^{\tau}}}$. In fact, we will be able to estimate the 2-oscillations of $D_{n}$ by comparing it with $H_n$ by adapting the strategy of \cite{UA}, see section 5. For convenience, let $\Psi(k)=\sum_{1\le s\le k}\psi(s)$. We perform summation by parts
\[
D_kf(x)=\frac{1}{\Psi(k)}\sum_{1\le s \le k}\psi(s)f(x-s)=
\]
\[
\frac{1}{\Psi(k)}\Big(\psi(k)\sum_{1\le s\le k}f(x-s)-\sum_{1\le s\le k-1}\Big(
\sum_{1\le l \le s}f(x-l)\Big)(\psi(s+1)-\psi(s))\Big)=
\]
\[
\frac{k\psi(k)}{\Psi(k)}\sum_{1\le s\le k}\frac{f(x-s)}{k}-\sum_{1\le s\le k-1}\frac{s(\psi(s+1)-\psi(s))}{\Psi(k)}
\sum_{1\le l \le s}\frac{f(x-l)}{s}=
\]
\[
\frac{k\psi(k)}{\Psi(k)}H_kf(x)-\sum_{1\le s\le k-1}\frac{s(\psi(s+1)-\psi(s))}{\Psi(k)}H_sf(x)=\sum_{s=1}^{\infty}\lambda_{s}^kH_sf(x)
\]
where
\[ 
\lambda_{s}^k= \left\{
\begin{array}{ll}
     \frac{s(\psi(s)-\psi(s+1))}{\Psi(k)}&
     \text{if }1\le s\le k-1\\
      \frac{k\psi(k)}{\Psi(k)}&\text{if }s=k\\
      0&\text{if }s>k\\
\end{array} 
\right. 
\]
We have shown $D_kf(x)=\sum_{s=1}^{\infty}\lambda_{s}^kH_sf(x)$. Without loss of generality, $\psi(s)$ is decreasing. Thus $(\lambda_s^k)_{s,k\in\mathbb{N}}$ is a family of non-negative real numbers (in the spirit of Lemma~2 from \cite{UA}) and we note that for any $k\in\mathbb{N}$ we have that 
\[
\sum_{s=1}^\infty \lambda_s^k=\sum_{s=1}^{k-1}\frac{s(\psi(s)-\psi(s+1))}{\Psi(k)}+ \frac{k\psi(k)}{\Psi(k)}=1
\]
 and we also have that for any fixed $N\in\mathbb{N}$ the sequence $\sum_{s=1}^N\lambda_s^k$ is decreasing in $k$, since
\[
\sum_{s=1}^N\lambda_s^k=\left\{
\begin{array}{ll}
\frac{1}{\Psi(k)}\sum_{s=1}^N s(\psi(s)-\psi(s+1))&\text{if }1\le N\le k-1\\
1&\text{if }N\ge k\\
\end{array}
\right.
\]
and for any $1\le N\le k-1$ we have 
\[
\frac{1}{\Psi(k)}\sum_{s=1}^N s(\psi(s)-\psi(s+1))= \frac{1}{\Psi(k)}\Big(\sum_{s=1}^{N}\psi(s)-N\psi(N+1)\Big)\le 1
\]
We now introduce for any $k\in\mathbb{N}$ the function $N_k\colon [0,1)\to \mathbb{N}$ such that $N_k(t)=\min\{N\in\mathbb{N}:\,\sum_{i=1}^N\lambda^k_i>t\}$ and we also introduce $I_s^k=N_k^{-1}(\{s\})=\{t\in[0,1):\,N_k(t)=s\}$. We note that for all $k\in\mathbb{N}$, $N_k$ is increasing in $t$. Also, since for any fixed $N\in\mathbb{N}$ the sequence $\sum_{s=1}^N\lambda_s^k$ is decreasing in $k$, we have that for any fixed $t\in[0,1)$, the sequence $N_k(t)$ is increasing in $k$. We also note that $|I_s^k|=\lambda^k_s$. Then
\[
D_kf(x)=\sum_{s=1}^{\infty}\lambda_{s}^kH_sf(x)=\sum_{s=1}^{\infty}|I_s^k|H_sf(x)=\sum_{s=1}^{\infty}\int_{I_s^k}H_{N_k(t)}f(x)dt=\int_0^1H_{N_k(t)}f(x)dt
\]
Finally, for any $J\in\mathbb{N}_0$ and any $I=\{I_0,\dotsc,I_J\}\in \frak{S}_J(\mathbb{N}_0)$, we have that
\[
O^2_{I,J}\big(D_kf(x):\,k\in\mathbb{N}\big)=O^2_{I,J}\Big(\sum_{s=1}^{\infty}\lambda_{s}^kC_sf(x):\,k\in\mathbb{N}\Big)=O^2_{I,J}\bigg(\int_0^1C_{N_k(t)}f(x)dt:\,k\in\mathbb{N}\bigg)=
\]
\[
\bigg(\sum_{j=0}^{J-1}\sup_{I_j\le k<I_{j+1}}\bigg|\int_0^1\big(H_{N_k(t)}f(x)-H_{N_{I_j}(t)}f(x)\big)dt\bigg|^2\bigg)^{1/2}\le 
\]
\[
\bigg(\sum_{j=0}^{J-1}\bigg(\int_0^1\sup_{I_j\le k<I_{j+1}}|H_{N_k(t)}f(x)-H_{N_{I_j}(t)}f(x)|dt\bigg)^2\bigg)^{1/2}\le
\]
\[
\int_0^1\Big(\sum_{j=0}^{J-1}\sup_{I_j\le k<I_{j+1}}|H_{N_k(t)}f(x)-H_{N_{I_j}(t)}f(x)|^2\Big)^{1/2} dt
\]
and now we finish the argument by noting that
\[
\|O^2_{I,J}\big(D_kf(x):\,k\in\mathbb{N}\big)\|_{\ell^p(\mathbb{Z})}\le\bigg(\sum_{x\in\mathbb{Z}}\bigg|\int_0^1\Big(\sum_{j=0}^{J-1}\sup_{I_j\le k<I_{j+1}}|H_{N_k(t)}f(x)-H_{N_{I_j}(t)}f(x)|^2\Big)^{1/2} dt\bigg|^p\bigg)^{1/p}\le
\]
\[
\int_0^1\Big(\sum_{x\in\mathbb{Z}}\Big(\sum_{j=0}^{J-1}\sup_{I_j\le k<I_{j+1}}|H_{N_k(t)}f(x)-H_{N_{I_j}(t)}f(x)|^2\Big)^{p/2}\Big)^{1/p}dt\le
\]
\[
\int_0^1\Big(\sum_{x\in\mathbb{Z}}\Big(\sum_{j=0}^{J-1}\sup_{N_{I_j}(t)\le m<N_{I_{j+1}}(t)}|H_{m}f(x)-H_{N_{I_j}(t)}f(x)|^2\Big)^{p/2}\Big)^{1/p}dt=
\]
\[
\int_0^1\|O^2_{\{N_{I_0(t)},\dotsc,N_{I_J(t)}\},|\{N_{I_0(t)},\dotsc,N_{I_J(t)}\}|}\big(H_{m}f(x):\,m\in\mathbb{N}\big)\|_{\ell^p(\mathbb{Z})}dt\le
\]
\[
\int_0^1\sup_{\tilde{J}\in\mathbb{N}}\sup_{\tilde{I}\in\frak{S}(\mathbb{N})}\|O^2_{\tilde{I},\tilde{J}}\big(H_mf(x):\,m\in\mathbb{N}\big)\|_{\ell^p(\mathbb{Z})}dt\le C_p\|f\|_{\ell^p(\mathbb{Z})}
\]
where we have used the fact that uniform 2-oscillation $\ell^p$-estimates do hold for the standard averaging operator, see \cite{UOI} or \cite{UOIAO}, for any $p\in(1,\infty)$, or more precisely, 
\[
\sup_{\tilde{J}\in\mathbb{N}}\sup_{\tilde{I}\in\frak{S}(\mathbb{N})}\|O^2_{\tilde{I},\tilde{J}}\big(H_mf(x):\,m\in\mathbb{N}\big)\|_{\ell^p(\mathbb{Z})}\lesssim_p\|f\|_{\ell^p(\mathbb{Z})}
\]
We note that 
\[
\sup_{J\in\mathbb{N}_0}\sup_{I\in \mathfrak{S}_J(\mathbb{N}_0)}\|O^2_{I,J}\big(D_{2^{n^{\tau}}}f:\,n\in\mathbb{N}_0\big)\|_{\ell^p(\mathbb{Z})}\le\sup_{J\in\mathbb{N}}\sup_{I\in\frak{S}(\mathbb{N})}\|O^2_{I,J}\big(D_kf(x):\,k\in\mathbb{N}\big)\|_{\ell^p(\mathbb{Z})}\lesssim_p\|f\|_{\ell^p(\mathbb{Z})}
\]
This establishes the estimate ($\ref{U2OE}$).
\end{proof}
\section{Vector-Valued maximal estimates and concluding the proof of Theorem~$\ref{MPTC}$}
In this section we establish vector-valued estimates for the maximal function corresponding to $M_t$. We fix a set $B$ as in the introduction with $c_1\in[1,2)$, $c_2\in[1,6/5)$. By the Calder\'on Transference Principle, in order  to establish estimate ($\ref{VVMFE}$), it suffices to show the following.

\begin{proposition}For any $p\in(1,\infty)$, there exists a constant $C_p$ such that for any $(f_j)_{j\in\mathbb{Z}}\in \ell^p\big(\mathbb{Z};\ell^2(\mathbb{Z})\big)$ we have 
\[\bigg\|\Big(\sum_{j\in\mathbb{Z}}\big(\sup_{t\in[1,\infty)}M_t|f_j|\big)^2\Big)^{1/2}\bigg\|_{\ell^p(\mathbb{Z})}\le C_p\Big\|\Big(\sum_{j\in\mathbb{Z}}|f_j|^2\Big)^{1/2}\Big\|_{\ell^p(\mathbb{Z})}
\] 
\end{proposition}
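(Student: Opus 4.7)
The plan is to reduce the vector-valued maximal inequality to the Fefferman--Stein inequality for the discrete Hardy--Littlewood maximal operator $H^* g = \sup_{m \in \mathbb{N}} H_m g$ (applied to nonnegative $g$), via the same comparison chain $M_t \to A_t \to D_t$ developed in the previous section. Since $|B_t| \simeq \varphi_2(t)$ and $\varphi_2$ is doubling, one has the pointwise reduction $\sup_{t \in [1,\infty)} M_t|f| \lesssim \sup_{k \in \mathbb{N}_0} M_{2^{k+1}}|f|$ (as in Remark~\ref{SDef}), so it suffices to bound $\big\|\big(\sum_j (\sup_k M_{2^k}|f_j|)^2\big)^{1/2}\big\|_{\ell^p(\mathbb{Z})}$. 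The summation-by-parts identity $D_k f(x) = \int_0^1 H_{N_k(t)} f(x)\,dt$ established in the oscillation proof shows that for nonnegative $f$ one has $\sup_{k} D_{2^k} f \le H^* f$ pointwise, so the $D_{2^k}$-portion is controlled directly by the Fefferman--Stein inequality for $H^*$ (see \cite{SF}, \cite{MVV}).

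It remains to handle the error term $G_k := M_{2^k} - D_{2^k}$. Using $\sup_k |G_k f_j| \le \sum_k |G_k f_j|$ together with Minkowski's inequality in the $\ell^2_j$-sum and then in $\ell^p$ gives
\[
\Big\|\Big(\sum_{j} (\sup_k |G_k f_j|)^2\Big)^{1/2}\Big\|_{\ell^p(\mathbb{Z})}
\;\le\; \sum_{k \in \mathbb{N}_0} \Big\|\Big(\sum_j |G_k f_j|^2\Big)^{1/2}\Big\|_{\ell^p(\mathbb{Z})},
\]
so the task reduces to proving
\[
\Big\|\Big(\sum_j |G_k f_j|^2\Big)^{1/2}\Big\|_{\ell^p(\mathbb{Z})} \;\lesssim\; 2^{-\chi' k} \Big\|\Big(\sum_j |f_j|^2\Big)^{1/2}\Big\|_{\ell^p(\mathbb{Z})}
\]
for some $\chi' = \chi'(p) > 0$. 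At $p = 2$, the exponential sum estimate of Lemma~\ref{TrEst} yields $\|G_k\|_{\ell^2 \to \ell^2} \lesssim 2^{-\chi k}$ (the same input already used in the oscillation argument to bound $\|M_{2^k} - A_{2^k}\|_{\ell^2 \to \ell^2}$ and $\|A_{2^k} - D_{2^k}\|_{\ell^2 \to \ell^2}$), and since $G_k$ acts diagonally on the $\ell^2_j$-component this lifts directly to $\ell^2(\mathbb{Z}; \ell^2)$. For $p_0 \ne 2$, the positive-convolution domination $|G_k f_j| \le (M_{2^k} + D_{2^k})|f_j|$, combined with the standard Minkowski-in-the-kernel argument for positive convolutions, gives a uniform $\ell^{p_0}(\mathbb{Z}; \ell^2)$-bound of size $O(1)$. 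Interpolating these two endpoints by Riesz--Thorin in the mixed-norm scale produces the required geometric factor $2^{-\chi' k}$ for every $p \in (1, \infty)$, and summing the geometric series concludes the proof.

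The main obstacle is the last interpolation step in the mixed-norm space $\ell^p(\mathbb{Z}; \ell^2)$: since $G_k$ is a difference of two positive convolutions rather than itself positive, one cannot directly estimate its $\ell^p(\ell^2)$-norm by its scalar $\ell^p$-norm, and one must carefully set up the two endpoint inequalities so that Riesz--Thorin applied to the Banach-valued extension of $G_k$ indeed transfers the small $\ell^2(\ell^2)$-norm to a small $\ell^p(\ell^2)$-norm for every $p \in (1, \infty)$. Once this is arranged, the rest of the argument is routine given the comparison estimates from Section~3.
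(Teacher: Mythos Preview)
Your proposal is correct and follows the same overall architecture as the paper (dyadic reduction, splitting into main term plus error, Fefferman--Stein for the main term, geometric decay plus interpolation for the error), but with two local differences worth noting. For the main term you pass all the way to $D_{2^k}$ and invoke the integral representation $D_kf=\int_0^1 H_{N_k(t)}f\,dt$ to get the pointwise domination $\sup_k D_{2^k}|f|\le H^*|f|$; the paper instead stops at $A_{2^n}$ and bounds $\sup_n A_{2^n}|f|\lesssim \sup_k H_{2^k}|f|$ directly by a dyadic-block argument exploiting $\psi(s)\simeq\varphi_2'(2^k)$ on $[2^k,2^{k+1})$. Both routes give the same reduction to Fefferman--Stein; yours reuses more of Section~3 but goes through one extra comparison ($A$ to $D$). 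For the error term, the paper avoids your ``main obstacle'' altogether: it first interpolates the \emph{scalar} operator norm of $T_n=M_{2^n}-A_{2^n}$ between $\ell^2$ and $\ell^{p_0}$ to obtain $\|T_n\|_{\ell^p\to\ell^p}\lesssim 2^{-\chi\theta n}$, and then simply quotes the fact that every bounded linear operator on $\ell^p(\mathbb{Z})$ has an $\ell^2$-valued extension with the same norm (see \cite{G}, p.~386). This gives the vector-valued bound $\|(\sum_j|T_n f_j|^2)^{1/2}\|_{\ell^p}\lesssim 2^{-\chi\theta n}\|(\sum_j|f_j|^2)^{1/2}\|_{\ell^p}$ for free, without any mixed-norm interpolation. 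Your vector-valued Riesz--Thorin would also go through, but the scalar-then-extend route is shorter and removes the step you flagged as delicate.
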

\begin{proof}
Firstly, we note that $\sup_{t\in[1,\infty)}M_t|f|(x)\lesssim\sup_{n\in\mathbb{N}_0}M_{2^n}|f|(x)$ and thus
\[
\bigg\|\Big(\sum_{j\in\mathbb{Z}}\big(\sup_{t\in[1,\infty)}M_t|f_j|\big)^2\Big)^{1/2}\bigg\|_{\ell^p(\mathbb{Z})}\lesssim\bigg\|\Big(\sum_{j\in\mathbb{Z}}\big(\sup_{n\in\mathbb{N}_0}M_{2^n}|f_j|\big)^2\Big)^{1/2}\bigg\|_{\ell^p(\mathbb{Z})}\le
\]
\[
\bigg\|\Big(\sum_{j\in\mathbb{Z}}\big(\sup_{n\in\mathbb{N}_0}\big|M_{2^n}|f_j|-A_{2^n}|f_j|\big|+\sup_{n\in\mathbb{N}_0}A_{2^n}|f_j|\big)^2\Big)^{1/2}\bigg\|_{\ell^p(\mathbb{Z})}\le
\]
\[
\bigg\|\Big(\sum_{j\in\mathbb{Z}}\big(\sup_{n\in\mathbb{N}_0}\big|M_{2^n}|f_j|-A_{2^n}|f_j|\big|^2\big)\Big)^{1/2}\bigg\|_{\ell^p(\mathbb{Z})}+\bigg\|\Big(\sum_{j\in\mathbb{Z}}\big(\sup_{n\in\mathbb{N}_0}A_{2^n}|f_j|\big)^2\Big)^{1/2}\bigg\|_{\ell^p(\mathbb{Z})}\le
\]
\begin{equation}\label{VVSeq}
\bigg\|\Big(\sum_{j\in\mathbb{Z}}\sum_{n\in\mathbb{N}_0}\big|M_{2^n}|f_j|-A_{2^n}|f_j|\big|^2\Big)^{1/2}\bigg\|_{\ell^p(\mathbb{Z})}+\bigg\|\Big(\sum_{j\in\mathbb{Z}}\big(\sup_{n\in\mathbb{N}_0}A_{2^n}|f_j|\big)^2\Big)^{1/2}\bigg\|_{\ell^p(\mathbb{Z})}
\end{equation}
We focus on the first term. For $p=2$ we note that 
\[
\bigg\|\Big(\sum_{j\in\mathbb{Z}}\sum_{n\in\mathbb{N}_0}\big|M_{2^n}|f_j|-A_{2^n}|f_j|\big|^2\Big)^{1/2}\bigg\|_{\ell^2(\mathbb{Z})}=\Big(\sum_{x\in\mathbb{Z}}\sum_{j\in\mathbb{Z}}\sum_{n\in\mathbb{N}_0}\big|M_{2^n}|f_j|-A_{2^n}|f_j|\big|^2\Big)^{1/2}=
\]
\[
\Big(\sum_{j\in\mathbb{Z}}\sum_{n\in\mathbb{N}_0}\big\|M_{2^n}|f_j|-A_{2^n}|f_j|\big\|^2_{\ell^2(\mathbb{Z})}\Big)^{1/2}
\]
We note that Plancherel theorem combined with Lemma~$\ref{TrEst}$ yield the following
\[
\|M_{2^n}|f_j|-A_{2^n}|f_j|\|_{\ell^2(\mathbb{Z})}\lesssim 2^{-\chi n} \|f_j\|_{\ell^2(\mathbb{Z})}
\]
and thus
\[
\bigg\|\Big(\sum_{j\in\mathbb{Z}}\sum_{n\in\mathbb{N}_0}\big|M_{2^n}|f_j|-A_{2^n}|f_j|\big|^2\Big)^{1/2}\bigg\|_{\ell^2(\mathbb{Z})}\lesssim\Big(\sum_{j\in\mathbb{Z}}\sum_{n\in\mathbb{N}_0}2^{-2\chi n} \|f_j\|^2_{\ell^2(\mathbb{Z})}\Big)^{1/2}\lesssim \Big(\sum_{j\in\mathbb{Z}}\|f_j\|^2_{\ell^2(\mathbb{Z})}\Big)^{1/2}=
\]
\[
\Big(\sum_{j\in\mathbb{Z}}\sum_{x\in\mathbb{Z}}|f_j(x)|^2 \Big)^{1/2}= \Big\|\Big(\sum_{j\in\mathbb{Z}}|f_j|^2\Big)^{1/2}\Big\|_{\ell^2(\mathbb{Z})} 
\]
For the case of $p\neq 2$, we proceed in a manner identical to the one of the previous section. Firstly, let us assume that $p\in(2,\infty)$. We fix a $p_0>p$ and we note that there exists a positive constant $C$ such that
\[
\|M_{2^{n}}f-A_{2^{n}}f\|_{\ell^{p_0}(\mathbb{Z})}\le 
\|M_{2^{n}}f\|_{\ell^{p_0}(\mathbb{Z})}+\|A_{2^{n}}f\|_{\ell^{p_0}(\mathbb{Z})}\le C\|f\|_{\ell^{p_0}(\mathbb{Z})}
\] since
\[
\bigg\|\frac{1}{|B_t|}\sum_{s\in B_t}f(\cdot-s)\bigg\|_{\ell^{p_0}(\mathbb{Z})}\le \|f\|_{\ell^{p_0}(\mathbb{Z})}\text{ \& }
\bigg\|\frac{1}{|B_t|}\sum_{1\le s\le t}\psi(s)f(\cdot-s)\bigg\|_{\ell^{p_0}(\mathbb{Z})}\le \frac{1}{|B_t|}\sum_{1\le s\le t}\psi(s)\|f\|_{\ell^{p_0}(\mathbb{Z})}\lesssim \|f\|_{\ell^{p_0}}
\]
We may choose $\theta\in (0,1)$ such that $\frac{1}{p}=\frac{\theta}{2}+\frac{1-\theta}{p_0}$ and use Riesz–Thorin interpolation theorem. Since for any $n\in\mathbb{N}_0$ we have
\[
\|M_{2^{n}}f-A_{2^{n}}f\|_{\ell^{2}(\mathbb{Z})}\le C2^{-\chi n}\|f\|_{\ell^2(\mathbb{Z})}\text{ \& }\|M_{2^{n}}f-A_{2^{n}}f\|_{\ell^{p_0}(\mathbb{Z})}\le C\|f\|_{\ell^{p_0}(\mathbb{Z})}
\]
we interpolate to obtain
\begin{equation}\label{ERES}
\|M_{2^{n}}f-A_{2^{n}}f\|_{\ell^{p}(\mathbb{Z})}\le (C  2^{-\chi n})^{\theta}C^{1-\theta}\|f\|_{\ell^{p}(\mathbb{Z})}=C (2^{-\chi \theta})^{n} \|f\|_{\ell^{p}(\mathbb{Z})}
\end{equation}
Thus if we let $T_n\colon \ell^{p}(\mathbb{Z})\to \ell^p(\mathbb{Z})$ such that $T_nf=M_{2^n}f-A_{2^n}f$, then we know that $T_n$ is a bounded linear operator with $\|T_n\|_{\ell^p(\mathbb{Z})\to\ell^p(\mathbb{Z})}\le C(2^{\chi\theta})^{-n}$. Thus, we know that $T_n$ has an $\ell^2$-valued extension (see \cite{G}, page 386) with the same norm, that is
\begin{equation}\label{VVERES}
\Big\|\big(\sum_{j\in\mathbb{Z}}|T_n(f_j)|^2\big)^{1/2}\Big\|_{\ell^p(\mathbb{Z})}\le C(2^{\chi\theta})^{-n}\Big\|\big(\sum_{j\in\mathbb{Z}}|f_j|^2\big)^{1/2}\Big\|_{\ell^p(\mathbb{Z})}
\end{equation}
Finally, since $p/2>1$, we get
\[
\bigg\|\Big(\sum_{j\in\mathbb{Z}}\sum_{n\in\mathbb{N}_0}\big|M_{2^n}|f_j|-A_{2^n}|f_j|\big|^2\Big)^{1/2}\bigg\|_{\ell^p(\mathbb{Z})}=\bigg(\sum_{x\in\mathbb{Z}}\Big(\sum_{n\in\mathbb{N}_0}\sum_{j\in\mathbb{Z}}\big|M_{2^n}|f_j|(x)-A_{2^n}|f_j|(x)\big|^2\Big)^{p/2}\bigg)^{1/p}\le
\]
\[
\bigg(\sum_{n\in\mathbb{N}_0}\Big(\sum_{x\in\mathbb{Z}}\Big(\sum_{j\in\mathbb{Z}}\big|T_n|f_j|(x)\big|^2\Big)^{p/2}\Big)^{2/p}\bigg)^{1/2}\le \Big(\sum_{n\in\mathbb{N}_0}\Big\|\Big(\sum_{j\in\mathbb{Z}}\big|T_n|f_j|\big|^2\Big)^{1/2}\Big\|_{\ell^p(\mathbb{Z})}^2\Big)^{1/2}\le
\]
\[
\Big(
\sum_{n\in\mathbb{N}_0}C^2(2^{2\chi\theta})^{-n}\Big\|\big(\sum_{j\in\mathbb{Z}}|f_j|^2\big)^{1/2}\Big\|^2_{\ell^p(\mathbb{Z})}
\Big)^{1/2}\le C\sum_{n\in\mathbb{N}_0}(2^{2\chi\theta})^{-n}\Big\|\big(\sum_{j\in\mathbb{Z}}|f_j|^2\big)^{1/2}\Big\|_{\ell^p(\mathbb{Z})}\lesssim_{p} \Big\|\big(\sum_{j\in\mathbb{Z}}|f_j|^2\big)^{1/2}\Big\|_{\ell^p(\mathbb{Z})} 
\]
For $p\in (1,2)$ the situation is similar, we choose $p_0\in (1,p)$, and $\theta\in (0,1)$ such that $\frac{1}{p}=\frac{\theta}{2}+\frac{1-\theta}{p_0}$ and Riesz–Thorin interpolation theorem yields the estimate of ($\ref{ERES}$), which in turn implies the estimate ($\ref{VVERES}$). Since $p<2$, we have
\[
\bigg\|\Big(\sum_{j\in\mathbb{Z}}\sum_{n\in\mathbb{N}_0}\big|M_{2^n}|f_j|-A_{2^n}|f_j|\big|^2\Big)^{1/2}\bigg\|_{\ell^p(\mathbb{Z})}=\bigg(\sum_{x\in\mathbb{Z}}\Big(\sum_{n\in\mathbb{N}_0}\sum_{j\in\mathbb{Z}}\big|M_{2^n}|f_j|(x)-A_{2^n}|f_j|(x)\big|^2\Big)^{p/2}\bigg)^{1/p}=
\]
\[
\bigg(\sum_{x\in\mathbb{Z}}\Big(\sum_{n\in\mathbb{N}_0}\Big[\Big(\sum_{j\in\mathbb{Z}}\big|T_n|f_j|(x)\big|^2\Big)^{1/2}\Big]^2\Big)^{p/2}\bigg)^{1/p}\le\bigg(\sum_{x\in\mathbb{Z}}\sum_{n\in\mathbb{N}_0}\Big[\Big(\sum_{j\in\mathbb{Z}}\big|T_n|f_j|(x)\big|^2\Big)^{1/2}\Big]^p\bigg)^{1/p}=
\]
\[
\big(\sum_{n\in\mathbb{N}_0}\Big\|\Big(\sum_{j\in\mathbb{Z}}\big|T_n|f_j|\big|^2\big)^{1/2}\Big\|_{\ell^p(\mathbb{Z})}^p\Big)^{1/p}\lesssim  \Big( \sum_{n\in\mathbb{N}_0} 2^{-p\chi\theta n}\big\|\big(\sum_{j\in\mathbb{Z}}|f_j|^2\big)^{1/2}\big\|^p_{\ell^p(\mathbb{Z})}\Big)^{1/p}\lesssim_p
\big\|\big(\sum_{j\in\mathbb{Z}}|f_j|^2\big)^{1/2}\big\|_{\ell^p(\mathbb{Z})}
\]
We have bounded appropriately the first term of ($\ref{VVSeq}$)  and all is left is to bound the second term. We firstly observe that for any $n\in\mathbb{N}_0$ we get
\[
A_{2^n}|f|(x)=\frac{1}{|B_{2^n}|}\sum_{1\le s\le 2^n}\psi(s)|f(x-s)|\le \frac{1}{|B_{2^n}|}\sum_{k=0}^n\sum_{2^k\le s< 2^{k+1}}\psi(s)|f(x-s)|\le
\]
\[
\frac{1}{|B_{2^n}|}\sum_{k=0}^n|B\cap[2^k,2^{k+1})|\frac{1}{|B\cap[2^k,2^{k+1})|}\sum_{2^k\le s< 2^{k+1}}\psi(s)|f(x-s)|\le
\]
\[
\frac{|B_{2^{n+1}}|}{|B_{2^n}|}\sup_{k\in\mathbb{N}_0}\frac{1}{|B\cap[2^k,2^{k+1})|}\sum_{2^k\le s< 2^{k+1}}\psi(s)|f(x-s)|\lesssim
\]
\[
\frac{\varphi_2(2^{n+1})}{\varphi_2(2^n)} \sup_{k\in\mathbb{N}_0}\frac{\varphi_2'(2^k)}{|B\cap[2^k,2^{k+1})|}\sum_{2^k\le s< 2^{k+1}}|f(x-s)|\lesssim\sup_{k\in\mathbb{N}_0}\frac{\varphi_2'(2^k)2^k}{|B\cap[2^k,2^{k+1})|}\frac{1}{2^k}\sum_{2^k\le s< 2^{k+1}}|f(x-s)|\lesssim
\]
\[
\sup_{k\in\mathbb{N}_0}\frac{1}{2^k}\sum_{2^k\le s< 2^{k+1}}|f(x-s)|\lesssim \sup_{k\in\mathbb{N}_0}\frac{1}{2^k}\sum_{1\le s\le  2^{k}}|f(x-s)|=\sup_{k\in\mathbb{N}_0}H_{2^k}|f|(x)
\]
since\[\frac{\varphi_2'(2^k)2^k}{|B\cap[2^k,2^{k+1})|}\simeq \frac{\varphi_2(2^k)}{|B\cap[2^k,2^{k+1})|}\simeq 1 
\]
Since $n\in\mathbb{N}_0$ was arbitrary, we have shown that $\sup_{n\in\mathbb{N}_0}A_{2^n}|f|(x)\lesssim\sup_{n\in\mathbb{N}_0}C_{2^n}|f|(x)$, and thus the second term of $\ref{VVSeq}$ may dominated by
\[
\bigg\|\Big(\sum_{j\in\mathbb{Z}}\big(\sup_{n\in\mathbb{N}_0}A_{2^n}|f_j|\big)^2\Big)^{1/2}\bigg\|_{\ell^p(\mathbb{Z})}\lesssim \bigg\|\Big(\sum_{j\in\mathbb{Z}}\big(\sup_{n\in\mathbb{N}_0}H_{2^n}|f_j|\big)^2\Big)^{1/2}\bigg\|_{\ell^p(\mathbb{Z})}\lesssim_p\Big\|\big(\sum_{j\in\mathbb{Z}}|f_j|^2\big)^{1/2}\Big\|_{\ell^p(\mathbb{Z})}
\]
This completes the proof.
\end{proof}
The work of Section 3 and 4 proves Proposition~$\ref{OM}$.
We immediately describe how Proposition~$\ref{OM}$ implies Theorem~$\ref{MPTC}$.
\begin{proof}[Proof of Theorem~$\ref{MPTC}$ assuming Proposition~$\ref{OM}$] We simply apply Proposition~4.1 from \cite{MOE} to establish a multi-parameter uniform 2-oscillation estimate, which according to Remark~2.4 together with Proposition~2.8, page 15, from the same paper yield the desired result.
\end{proof}
\section{Proof of the weak-type (1,1) inequality}\label{WT11Inh}
	Throughout this section we have fixed a set $B$ with $\varphi_1\simeq \varphi_2$ and $c_1\in (1,30/29)$. All constants may depend on $\varphi_1,\varphi_2$ and $\psi$ but on nothing else, unless stated otherwise. We remind the reader that it suffices to establish the weak type (1,1) bound for the smooth dyadic maximal function
\[
\mathcal{M}(f)(x)=\sup_{k\in\mathbb{N}_0} \{K_{2^k}*|f|(x)\text{, where }K_N(x)=\frac{1}{\varphi_2(N)}\sum_{n\in B}\delta_{n}(x)\eta\Big(\frac{n}{N}\Big)\text{, see Remark~$\ref{SDef}$.}
\]
	The next two Lemmas are devoted to studying the properties of $K_N*\widetilde{K}_N$ and they will be key ingredients for establishing the weak-type (1,1) bound. 
	
	\begin{lemma}\label{1Aprox}
		There exists a positive constant $C$ such that for all $N\in\mathbb{N}$ and all $x\in\mathbb{Z}$ with $C\le |x|\le \varphi_1(N)$, we have that $|K_N*\tilde{K}_N(x)|\le C N^{-1}$.
	\end{lemma}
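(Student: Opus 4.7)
The plan is to unpack the convolution and reduce the claim to a counting estimate. Writing the sum explicitly gives
\[
K_N*\widetilde{K}_N(x)=\frac{1}{\varphi_2(N)^2}\sum_{\substack{m\in B\\ m+x\in B}}\eta(m/N)\,\eta((m+x)/N),
\]
so that $|K_N*\widetilde{K}_N(x)|\lesssim 1/N$ follows from
\[
N(x):=\#\{m\in B:\ m+x\in B,\ m\in[N/2,4N]\}\lesssim \varphi_2(N)^2/N\simeq \varphi_2(N)\psi(N).
\]
Heuristically, $B$ has density $\simeq\psi(N)\simeq\varphi_2(N)/N$ in $[N/2,4N]$, so the expected count is $\sim N\cdot\psi(N)^2$, which matches the target size.

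To realize this, I would approximate $\mathbf{1}_B$ by its smooth density $\psi$ via the Fourier expansion of the indicator of an interval on $\mathbb{T}$,
\[
\mathbf{1}_B(n)=\mathbf{1}_{[0,\psi(n))}(\{\varphi_1(n)\})=\psi(n)+\sum_{h\neq 0}c_h(n)\,e^{2\pi i h\varphi_1(n)},\qquad |c_h(n)|\le \min\bigl(\psi(n),1/|h|\bigr),
\]
and expand the product $\mathbf{1}_B(m)\mathbf{1}_B(m+x)$ as a diagonal term $\psi(m)\psi(m+x)$ plus three oscillatory remainders. Because $|x|\le\varphi_1(N)\ll N$, both $\psi(m)$ and $\psi(m+x)$ are comparable to $\psi(N)$ on the support of $\eta_x(m):=\eta(m/N)\eta((m+x)/N)$, so the diagonal contribution is $\simeq N\psi(N)^2\simeq\varphi_2(N)^2/N$, which is exactly the target after normalization.

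The substance of the argument is then to show that the three oscillatory remainders are also $\lesssim \varphi_2(N)^2/N$. Each is a weighted sum of the form $\sum_m w(m)\,e^{2\pi i(h_1\varphi_1(m)+h_2\varphi_1(m+x))}$ with $w$ smooth, supported in $[N/2,4N]$, and $(h_1,h_2)\in\mathbb{Z}^2\setminus\{(0,0)\}$. The second derivative of the phase equals $h_1\varphi_1''(m)+h_2\varphi_1''(m+x)$, which is of order $|h_1+h_2|\,|\varphi_1''(N)|$ when $h_1+h_2\neq 0$ and of order $|h|\,|x|\,|\varphi_1'''(N)|$ in the critical case $h_1=-h_2=h$. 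Applying van der Corput's second-derivative inequality in each regime, summing over $(h_1,h_2)$ using the decay $|c_h|\le 1/|h|$ to make the Fourier series converge after a truncation, and invoking the exponential sum machinery assembled in \cite{W11} and \cite{RTP} (extending Lemma~\ref{TrEst}), should yield the desired $O(\varphi_2(N)^2/N)$ bound on each remainder.

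The main obstacle, and the reason for both hypotheses $|x|\ge C$ and $\varphi_1\simeq\varphi_2$, lies in the critical regime $h_1+h_2=0$. Without the lower bound $|x|\ge C$ the phase $h(\varphi_1(m)-\varphi_1(m+x))$ would have a degenerate second derivative, so van der Corput gives nothing; without $\varphi_1\simeq\varphi_2$, Lemma~\ref{BVBI} fails and $B$ may contain a block of length $L>|x|$, which trivially produces $\simeq L^2$ pairs $(m,m+x)\in B\times B$ and destroys the counting bound regardless of any oscillatory cancellation. Under $\varphi_1\simeq\varphi_2$, blocks in $B$ have uniformly bounded length, and choosing $C$ larger than twice that bound forces $m$ and $m+x$ into distinct blocks, so the oscillation can be exploited. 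Carefully balancing the competing smallness of $c_h$, $\varphi_1''(N)$, $\varphi_1'''(N)$, and $|x|$ across the entire admissible range $C\le|x|\le\varphi_1(N)$ so that all three remainders close uniformly against a single geometric series in $h$ is where I expect the bulk of the technical effort to go.
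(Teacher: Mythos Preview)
Your approach is genuinely different from the paper's, and the gap you yourself flag as ``the main obstacle'' is real and not closed by the sketch. The Fourier-expansion/van der Corput route you describe is exactly the method the paper uses in Lemma~\ref{2Aprox} for the complementary range $|x|>\varphi_1(N)$; the paper deliberately abandons it for small $|x|$ and proves Lemma~\ref{1Aprox} by a direct arithmetic count instead.

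The concrete failure is in the diagonal regime $h_1=-h_2=h$. The phase $\phi(m)=h(\varphi_1(m)-\varphi_1(m+x))$ has $k$-th derivative of size $\simeq |h|\,|x|\,\varphi_1(N)/N^{k+1}$, so for $|x|$ bounded by a constant every van der Corput estimate loses a factor $\simeq |x|/N$ compared with the non-diagonal phases. For instance the first-derivative (Kusmin--Landau) bound gives $\lesssim N^2/(|h||x|\varphi_1(N))$; after multiplying by $|c_h|^2\lesssim\min\{\psi(N)^2,|h|^{-2}\}$ and summing in $h$ one obtains a contribution of order $(\varphi_1(N)/|x|)\log N$, which for $|x|\simeq 1$ is far larger than the target $\varphi_1(N)^2/N$. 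The second- and third-derivative tests fare no better. Your remark that $m$ and $m+x$ land in distinct blocks is a necessary combinatorial fact (its failure would immediately destroy the count), but it does not by itself produce usable oscillation in $\phi$.

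The paper's argument sidesteps exponential sums entirely. One parametrises each pair $(n,n+x)\in B\times B$ with $n\in(N/2,4N)$ by the integers $m=\lfloor\varphi_1(n)\rfloor$ and $m+s=\lfloor\varphi_1(n+x)\rfloor$, shows $s\simeq |x|\varphi_1(N)/N$ and, crucially, that $h_1(m+s)-h_1(m)$ lies within an $O(1)$-window of $x$ (this step uses $\varphi_1\simeq\varphi_2$ to bound $n-h_1(\varphi_1(n)-\psi(n))$ uniformly). Since $g(m)=h_1(m+s)-h_1(m)$ has increments $\simeq |x|/\varphi_1(N)$, for each admissible $s$ there are $\lesssim\varphi_1(N)/|x|$ admissible $m$'s, and there are $\lesssim |x|\varphi_1(N)/N$ admissible $s$'s; the product gives $|\mathcal{A}_N^x|\lesssim\varphi_1(N)^2/N$, hence $|K_N*\widetilde K_N(x)|\lesssim N^{-1}$. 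This lattice-point count is elementary and works uniformly in $C\le|x|\le\varphi_1(N)$.
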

	\begin{proof}
		We have assumed that $\varphi_2\simeq \varphi_1$ and thus, by Lemma $\ref{BVBI}$, there exists a uniform bound $C_B$ for the length of intervals contained in $B$. For any $|x|\in\mathbb{Z}$ such that $x\ge C_B+1$, we get
		\[
		K_N*\widetilde{K}_N(x)=\frac{1}{\varphi_2(N)^2}\sum_{n\in\mathbb{Z}}1_{B}(n)1_{B}(n+x)\eta\Big(\frac{n}{N}\Big)\eta\Big(\frac{n+x}{N}\Big)
		\]
		and with a change of variables we see that $K_N*\widetilde{K}_N(x)=K_N*\widetilde{K}_N(-x)$, and thus, without loss of generality, let us assume that $x \ge C_B+1$. Since the $\supp(\eta)\subseteq (1/2,4)$ and $0\le\eta(x)\le 1$, we have
		\[
		K_N*\widetilde{K}_N(x)\le \frac{1}{\varphi_2(N)^2}|\{\,n\in\mathbb{Z}\,:\,n,n+x\in B\cap (N/2,4N)\,\}|
		\]
		and all is left to do is estimate the cardinality of that set. Let $\mathcal{A}_N^x=\{\,n\in\mathbb{Z}\,:\,n,n+x\in B\cap (N/2,4N)\,\}$, and notice that for any $n\in\mathcal{A}_N^x$, we have that there exists a unique $s,m\in\mathbb{N}_0$ such that
		\[
		0\le \varphi_1(n+x)-(m+s)<\psi(n+x)\text{ and }0\le \varphi_1(n)-m<\psi(n)
		\]
		Notice that since $x>C_B$, we have that $s\ge 1$, since $n$ and $n+x$ cannot correspond to the same $m$. By combining the previous set of inequalities we obtain
		\[
		\varphi_1(n+x)-\varphi_1(n)-\psi(n+x)<s<\varphi_1(n+x)-\varphi_1(n)+\psi(n)
		\]
		For constants depending only on $\varphi_1,\varphi_2$ and $\psi$, we have
		\[
		\varphi_1(n+x)-\varphi_1(n)+\psi(n)\le C x\varphi_1'(N)+C\varphi_2'(N)\le C x\varphi_1'(N)+C\varphi_1'(N)=C(x+1)\frac{\varphi_1(N)}{N}\le Cx\frac{\varphi_1(N)}{N}
		\]
		and similarly
		\[
		\varphi_1(n+x)-\varphi_1(n)-\psi(n+x)\ge cx\varphi_1'(N)-c\varphi_2'(N)\ge cx\varphi_1'(N)-c\varphi_1'(N)\ge c(x-1)\frac{\varphi_1(N)}{N}\ge cx\frac{\varphi_1(N)}{N}  
		\]
		Thus $s\simeq \frac{x\varphi_1(N)}{N}$. We have that $m\le \varphi_1(4N)\lesssim \varphi_1(N)$ and $m>\varphi_1(n)-\psi(n)\ge \varphi_1(n)-C\varphi_1'(n)\ge \varphi_1(n)\big(1-C/n)\ge C\varphi_1(N)$, when $n\in (N/2,4N)$ thus $m\simeq \varphi_1(N)$.   We also note that
		\[
		h_1(\varphi_1(n)-\psi(n))<h_1(m)\le n\text{ and } h_1(\varphi_1(n+x)-\psi(n+x))<h_1(m+s)\le n+x
		\]
		and thus
		\[
		x-\big(n+x-h_1(\varphi_1(n+x)-\psi(n+x))\big)<h_1(m+s)-h_1(m)<x+\big(n-h_1(\varphi_1(n-\psi(n)))\big)
		\]
		Note that for all $l$ we get that $l-h_1(\varphi_1(l)-\psi(l))=h_1(\varphi_1(l))-h_1(\varphi_1(l)-\psi(l))=\psi(l)h_1'(\xi_l)$, for some $\xi_l\in (\varphi_1(l)-\psi(l),\varphi_1(l))$, and thus $l-h_1(\varphi_1(l)-\psi(l))\lesssim \varphi_2'(l)h_1'(\varphi_1(l))=\frac{\varphi_2'(l)}{\varphi_1'(l)}$. Since $\varphi_2\simeq \varphi_1$, we get that there exists an absolute constant $T$, such that $l-h_1(\varphi_1(l)-\psi(l))\le T$, and thus
		\[
		h_1(m+s)-h_1(m)\in B(x,T)
		\]
		Consider the set $\mathcal{B}_N^x=\{(s,m)\in\mathbb{N}\times\mathbb{N}\,:\,s\simeq \frac{x\varphi_1(N)}{N},\,m\simeq \varphi_1(N),\,h_1(m+s)-h_1(m)\in B(x,T)\}$ and note that for any $(s,m)\in \mathcal{B}^x_N$ there are at most $C_B$ number of $n's$ in $\mathcal{A}^x_N$ corresponding to $m$. Therefore $|\mathcal{A}^x_N|\lesssim  |\mathcal{B}^x_N|$, and everything reduces to estimating $|\mathcal{B}^x_N|$. For every $s\ge 1 $ such that $s\simeq \frac{x\varphi_1(N)}{N}$, we wish to estimate the number of $m$'s such that $(s,m)\in \mathcal{B}^x_N$. Notice that if we define $g(m)=h_1(m+s)-h_1(m)$ then by the Mean Value Theorem we get
		\[
		g(m+1)-g(m)=h_1(m+1+s)-h_1(m+1)-h_1(m+s)+h_1(m)=h_1'(m+s+\xi_1)-h_1'(m+\xi_2)=
		\]
		\[
		(s+\xi_1-\xi_2)h_1''(m+\xi_3)
		\]
		for some $\xi_1,\xi_2\in(0,1)$ and $\xi_3\in (m,m+s+1)$. Thus
		\[
		g(m+1)-g(m)\simeq sh_1''(\varphi_1(N))\simeq \frac{sN}{\varphi_1(N)^2}\simeq \frac{x}{\varphi_1(N)}
		\]
		Since $x-T<h_1(m+s)-h_1(m)<x+T$, according to the previous calculation, for any fixed $s\simeq \frac{x\varphi_1(N)}{N}$, we have that there are at most $1+C \frac{T\varphi_1(N)}{x}\lesssim \frac{\varphi_1(N)}{x}$, where in the last estimate we used that $x\le\varphi_1(N)$. The number of $s$'s in $[1,C\frac{x\varphi_1(N)}{N}]$, where $C$ is the implied fixed constant appearing in $s\simeq \frac{x\varphi_1(N)}{N}$, are bounded by $C\frac{x\varphi_1(N)}{N}$, and therefore
		\[
		|\mathcal{B}^x_N|\lesssim \frac{x\varphi_1(N)}{N} \frac{\varphi_1(N)}{x}=\frac{\varphi_1(N)^2}{N} 
		\]  This implies that
		\[
		K_N*\widetilde{K}_N(x)\lesssim \frac{\varphi_1(N)^2}{N\varphi_2(N)^2}\lesssim \frac{1}{N}
		\]and the proof is complete.
	\end{proof}
	
	\begin{lemma}\label{2Aprox}
		There exists a real number $\chi>0$ such that $K_N*\widetilde{K}_N(x)=G_N(x)+E_N(x)$ for all $|x|>\varphi_1(N)$ where
		\[
		G_N(x)=\frac{1}{\varphi_2(N)^2}\sum_{n\in\mathbb{Z}}\psi(n)\psi(n+|x|)\eta\Big(\frac{n}{N}\Big)\eta\Big(\frac{n+x}{N}\Big)\text{ and } E_N(x)=K_N*\widetilde{K}_N(x)-G_N(x)
		\]
		We also have that
		$G_N(x)\lesssim N^{-1}$, $|G_N(x+h)-G_N(x)|\lesssim N^{-2}|h|$ and $|E_N(x)|\lesssim N^{-1-\chi}$.
	\end{lemma}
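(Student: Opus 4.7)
The plan is to treat the three claims---decomposition, bounds on $G_N$, decay of $E_N$---in order. First, since $K_N*\widetilde{K}_N$ is even (as observed in the proof of Lemma~\ref{1Aprox}) it suffices to work with $x>0$; after a shift of the summation index, $G_N(x)$ is then nothing but $L_N*\widetilde{L}_N(x)$, where $L_N(n):=\psi(n)\eta(n/N)/\varphi_2(N)$ is the natural smooth counterpart of $K_N$ obtained by replacing $1_B$ with its density $\psi$, so the definition of $E_N$ makes it the error incurred by that replacement.

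For the two bounds on $G_N$, I would use $\psi(n)\simeq\varphi_2'(n)$ and $\varphi_2'(N)\simeq\varphi_2(N)/N$; on the $O(N)$-sized common support of the two bumps one has $\psi(n)\psi(n+x)\lesssim\psi(N)^2$, so
\[
G_N(x)\lesssim \frac{N\psi(N)^2}{\varphi_2(N)^2}\simeq \frac{1}{N}.
\]
For the Lipschitz bound I differentiate each summand in $x$: slow variation gives $|\psi'(n+x)|\lesssim\psi(N)/N$, and $|\eta'(\cdot/N)|/N\lesssim 1/N$, so each summand moves by at most $|h|\psi(N)^2/N$; summing $\lesssim N$ terms and dividing by $\varphi_2(N)^2$ yields $|G_N(x+h)-G_N(x)|\lesssim |h|/N^2$.

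The decay of $E_N$ is the main obstacle. Setting $U_N:=K_N-L_N$, I would expand
\[
E_N=L_N*\widetilde{U}_N+U_N*\widetilde{L}_N+U_N*\widetilde{U}_N.
\]
On the Fourier side, Lemma~\ref{TrEst} (after the usual summation-by-parts upgrade to accommodate the cutoff $\eta(\cdot/N)$) gives $\|\widehat{U}_N\|_{L^\infty(\mathbb{T})}\lesssim N^{-\chi-\chi'}$, while integration by parts applied twice to the smooth bump defining $L_N$ yields $|\widehat{L}_N(\xi)|\lesssim (1+N|\xi|)^{-2}$, hence $\|\widehat{L}_N\|_{L^1(\mathbb{T})}\lesssim 1/N$. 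Fourier inversion and H\"older then give $|L_N*\widetilde{U}_N(x)|, |U_N*\widetilde{L}_N(x)|\lesssim N^{-1-\chi-\chi'}$, which is admissible.

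The genuinely bilinear piece $U_N*\widetilde{U}_N(x)=\int_{\mathbb{T}}|\widehat{U}_N(\xi)|^2 e^{2\pi ix\xi}\,d\xi$ is the delicate one, since the naive estimates $\|\widehat{U}_N\|_\infty^2\lesssim N^{-2\chi-2\chi'}$ and $\|\widehat{U}_N\|_2^2\lesssim 1/\varphi_2(N)$ do not obviously reach $N^{-1-\chi}$ for the admissible exponent range. My plan for this piece is to rewrite it as $\varphi_2(N)^{-2}\sum_n (1_B(n)-\psi(n))(1_B(n-x)-\psi(n-x))\eta(n/N)\eta((n-x)/N)$ and perform summation by parts in $n$, exploiting the partial-sum estimate $|\sum_{m\le k}(1_B(m)-\psi(m))|\lesssim \varphi_2(N)N^{-\chi-\chi''}$ (extracted from Lemma~\ref{TrEst} by Fourier-expanding $1_{[1,k]}$) paired against the bounded-variation norm of the second factor times the smooth cutoffs, while leveraging the restriction $|x|>\varphi_1(N)$ to limit the number of lattice points at which both bumps are simultaneously nonzero. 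This is essentially the strategy of Section~5 of \cite{W11}, adapted to the more complicated structure of the sets $B_\pm$, and is where the hypotheses $c_1\in(1,30/29)$ and $\varphi_1\simeq\varphi_2$ are used.
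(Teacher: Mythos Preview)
Your treatment of the $G_N$ bounds is correct and matches the paper. Your Fourier-side handling of the mixed terms $L_N*\widetilde{U}_N$ and $U_N*\widetilde{L}_N$ via $\|\widehat{L}_N\|_{L^1(\mathbb{T})}\lesssim N^{-1}$ and $\|\widehat{U}_N\|_{L^\infty(\mathbb{T})}\lesssim N^{-\chi-\chi'}$ is in fact a cleaner packaging than the paper's physical-space estimates for $I_2,I_4$, though it rests on the same exponential-sum input.

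The gap is in the bilinear piece $U_N*\widetilde{U}_N$. Your plan is to Abel-sum in $n$, using the partial-sum bound $\big|\sum_{m\le k}(1_B(m)-\psi(m))\big|\lesssim \varphi_2(N)N^{-\chi-\chi'}$ against the total variation of the second factor $(1_B(n+x)-\psi(n+x))\eta(n/N)\eta((n+x)/N)$. But that second factor jumps by $\sim 1$ every time $n+x$ enters or leaves $B$, so its total variation on the support is $\simeq |B\cap[N/2,4N]|\simeq\varphi_2(N)$, not $O(1)$. Abel summation therefore yields only
\[
|U_N*\widetilde{U}_N(x)|\lesssim \frac{1}{\varphi_2(N)^2}\cdot \varphi_2(N)N^{-\chi-\chi'}\cdot\varphi_2(N)=N^{-\chi-\chi'},
\]
which misses the target $N^{-1-\chi}$ by essentially a full power of $N$. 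The restriction $|x|>\varphi_1(N)$ does not rescue this: since $\varphi_1(N)\ll N$, the common support of the two bumps still has size $\sim N$. Nor do the Fourier-side bounds you already have suffice, since $\|\widehat{U}_N\|_{L^2(\mathbb{T})}^2=\|U_N\|_{\ell^2}^2\simeq \varphi_2(N)^{-1}\gg N^{-1}$.

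The paper's route here is genuinely different and uses the arithmetic of $B$, not just the black-box estimate of Lemma~\ref{TrEst}. One writes $1_B(n)=\lfloor\varphi_1(n)\rfloor-\lfloor\varphi_1(n)-\psi(n)\rfloor=\psi(n)+\Delta_M(n)+\Pi_M(n)$, where $\Delta_M$ is a truncated Fourier expansion of the sawtooth in the phase $\varphi_1(n)$ and $\Pi_M$ is the tail. The diagonal--diagonal contribution (the paper's $I_6$) becomes a double sum over frequencies $m_1,m_2$ of bilinear exponential sums $\sum_n e^{-2\pi i(m_1\varphi_1(n)+m_2\varphi_1(n+x))}F^x_{m_1,m_2}(n)$, and it is a \emph{two-phase} Van der Corput estimate (Corollary~3.12 of \cite{W11} with $\kappa=1$) that produces the required saving; this is precisely where $|x|>\varphi_1(N)$ enters, to guarantee non-degeneracy of the combined phase, and where the numerology forcing $c_1\in(1,30/29)$ is computed. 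The remaining cross terms involving $\Pi_M$ are controlled by bounds on $\sum_n\min\{1,(M\|\varphi_1(n)\|)^{-1}\}$. Your outline does not touch any of this machinery, and without it the bilinear term cannot be closed.
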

	\begin{proof}
		We note that for all $n\in \mathbb{N}$ we have that $1_B(n)=\lfloor \varphi_1(n) \rfloor-\lfloor \varphi_1(n)-\psi(n)\rfloor$, see \cite[Lemma~2.2]{HLMP}. We can therefore split our kernel to several manageable pieces.
		\[
		K_N*\widetilde{K}_N(x)=\frac{1}{\varphi_2(N)^2}\sum_{n\in\mathbb{Z}}1_B(n)1_B(n+x)\eta\Big(\frac{n}{N}\Big)\eta\Big(\frac{n+x}{N}\Big)=
		\]
		
		\[
		\frac{1}{\varphi_2(N)^2}\sum_{n\in\mathbb{N}}\big(\lfloor \varphi_1(n) \rfloor-\lfloor \varphi_1(n)-\psi(n)\rfloor\big)\big(\lfloor \varphi_1(n+x) \rfloor-\lfloor \varphi_1(n+x)-\psi(n+x)\rfloor\big)\eta\Big(\frac{n}{N}\Big)\eta\Big(\frac{n+x}{N}\Big)
		\]
		We will exploit a famous truncated Fourier Series. More precisely, we know that if $\Phi(x)=\{x\}-1/2$ then for all $M\in\mathbb{N}$ we get 
		\[
		\Phi(x)=\sum_{0<|m|\le M}\frac{1}{2 \pi i m}e^{-2\pi i m x}+O\bigg(\min\bigg\{1,\frac{1}{M\|x\|}\bigg\}\bigg)\text{ (see section~2 from \cite{PSP}).}
		\]
		Importantly, we also have
		\[
		\min\bigg\{1,\frac{1}{M\|x\|}\bigg\}=\sum_{m\in\mathbb{Z}}b_me^{2\pi i m x}
		\]
		where $\|x\|=\min\{|x-n|:\,n\in\mathbb{Z}\}$, and
		\[
		b_m\lesssim \min\bigg\{\frac{\log(M)}{M},\frac{1}{|m|},\frac{M}{|m|^2}\bigg\}
		\]
		Finally, we can rewrite 
		\[
		1_B(n)=\lfloor \varphi_1(n) \rfloor-\lfloor \varphi_1(n)-\psi(n)\rfloor=\varphi_1(n)-\{\varphi_1(n)\}-\big(\varphi_1(n)-\psi(n)-\{\varphi_1(n)-\psi(n)\}\big)=
		\]
		\[
		\psi(n)+\{\varphi_1(n)-\psi(n)\}-\{\varphi_1(n)\}=\psi(n)+(\{\varphi_1(n)-\psi(n)\}-1/2)-(\{\varphi_1(n)\}-1/2)=
		\]
		\[
		\psi(n)+\Phi(\varphi_1(n)-\psi(n))-\Phi(\varphi_1(n))
		\]
		Let's use the truncated Fourier Series and define
		\[
		\Delta_M(n)=\sum_{0<|m|\le M}\frac{1}{2 \pi i m}e^{-2\pi i m (\varphi_1(n)-\psi(n))}-\sum_{0<|m|\le M}\frac{1}{2 \pi i m}e^{-2\pi i m \varphi_1(n)}=\]
		\[
		\sum_{0<|m|\le M}\frac{e^{-2\pi i m\varphi_1(n)}}{2 \pi i m}\big(e^{2\pi i m \psi(n)}-1\big)
		\]
		and 
		\[
		\Pi_M(n)=\Big(\Phi(\varphi_1(n)-\psi(n))-\Phi(\varphi_1(n))\Big)-\Delta_M(n)
		\]
		and thus 
		\[
		\Pi_M(n)=O\bigg(\min\bigg\{1,\frac{1}{M\|\varphi_1(n)-\psi(n)\|}\bigg\}\bigg)+O\bigg(\min\bigg\{1,\frac{1}{M\|\varphi_1(n)\|}\bigg\}\bigg)
		\]
		Thus
		\[
		1_B(n)=\psi(n)+\Delta_M(n)+\Pi_M(n)
		\]
		Returning back to the splitting
		\[
		K_N*\widetilde{K}_N(x)=\frac{1}{\varphi_2(N)^2}\sum_{n\in\mathbb{N}}\big(\psi(n)+\Delta_M(n)+\Pi_M(n)\big)\big(\psi(n+x)+\Delta_M(n+x)+\Pi_M(n+x)\big)\eta\Big(\frac{n}{N}\Big)\eta\Big(\frac{n+x}{N}\Big)=
		\]
		\begin{align*}
			&\frac{1}{\varphi_2(N)^2}\sum_{n\in\mathbb{N}}\psi(n)\psi(n+x)\eta\Big(\frac{n}{N}\Big)\eta\Big(\frac{n+x}{N}\Big)+\\
			&\frac{1}{\varphi_2(N)^2}\sum_{n\in\mathbb{N}}\psi(n)\Delta_M(n+x)\eta\Big(\frac{n}{N}\Big)\eta\Big(\frac{n+x}{N}\Big)+\\
			&\frac{1}{\varphi_2(N)^2}\sum_{n\in\mathbb{N}}\psi(n)\Pi_M(n+x)\eta\Big(\frac{n}{N}\Big)\eta\Big(\frac{n+x}{N}\Big)+\\
			&\frac{1}{\varphi_2(N)^2}\sum_{n\in\mathbb{N}}\Delta_M(n)\psi(n+x)\eta\Big(\frac{n}{N}\Big)\eta\Big(\frac{n+x}{N}\Big)+\\
			&\frac{1}{\varphi_2(N)^2}\sum_{n\in\mathbb{N}}\Pi_M(n)\psi(n+x)\big)\eta\Big(\frac{n}{N}\Big)\eta\Big(\frac{n+x}{N}\Big)+\\
			&\frac{1}{\varphi_2(N)^2}\sum_{n\in\mathbb{N}}\Delta_M(n)\Delta_M(n+x)\eta\Big(\frac{n}{N}\Big)\eta\Big(\frac{n+x}{N}\Big)+\\
			&\frac{1}{\varphi_2(N)^2}\sum_{n\in\mathbb{N}}\Delta_M(n)\Pi_M(n+x)\eta\Big(\frac{n}{N}\Big)\eta\Big(\frac{n+x}{N}\Big)+\\
			&\frac{1}{\varphi_2(N)^2}\sum_{n\in\mathbb{N}}\Pi_M(n)\Delta_M(n+x)\eta\Big(\frac{n}{N}\Big)\eta\Big(\frac{n+x}{N}\Big)+\\
			&\frac{1}{\varphi_2(N)^2}\sum_{n\in\mathbb{N}}\Pi_M(n)\Pi_M(n+x)\eta\Big(\frac{n}{N}\Big)\eta\Big(\frac{n+x}{N}\Big)=
		\end{align*}
		$=I_1(x)+I_2(x)+\dotsm+I_9(x)$. Let $G_N(x)=I_1(x)$ and $E_N(x)=\sum_{i=2}^9I_i(x)$. Let's firstly estimate $I_1$, we have
		\[
		I_1(x)=\frac{1}{\varphi_2(N)^2}\sum_{\substack{N/2\le n\le 4N\\N/2\le n+x\le 4N}}\psi(n)\psi(n+x)\eta\Big(\frac{n}{N}\Big)\eta\Big(\frac{n+x}{N}\Big)\lesssim \frac{N\varphi_2'(N)^2}{\varphi_2(N)^2}\lesssim N^{-1}
		\]
		and for any $h\in\mathbb{Z}$ we have
		\[
		|I_1(x+h)-I_1(x)|\le\frac{1}{\varphi_2(N)^2}\sum_{n\in\mathbb{N}}\psi(n)\eta\Big(\frac{n}{N}\Big)\bigg|\psi(n+x+h)\eta\Big(\frac{n+x+h}{N}\Big)-\psi(n+x)\eta\Big(\frac{n+x}{N}\Big)\bigg|=
		\]
		\[
		\frac{1}{\varphi_2(N)^2}\sum_{n\in\mathbb{N}}\psi(n)\eta\Big(\frac{n}{N}\Big)\bigg|\int_{n+x}^{n+x+h}\bigg(\psi'(t)\eta(t/N)-\psi(t)\eta'(t/N)\frac{1}{N}\bigg)dt\bigg|
		\] 
		It suffices to consider $x,h$ such that $n+x,n+x+h\in[N/2,4N]$ since the integrand is zero outside that interval. Thus we get
		\[
		|I_1(x+h)-I_1(x)|\lesssim \frac{\varphi_2'(N)}{\varphi_2(N)^2}N|h|(\varphi_2''(N)+\varphi_2'(N)/N)\lesssim N^{-2}|h|
		\]  
		This shows the properties of $G_N$ claimed in the Lemma. Now we bound $E_N$. Let's start with $I_2$. We can rewrite $I_2$ as
		\[
		I_2(x)=\frac{1}{\varphi_2(N)^2}\sum_{n\in\mathbb{N}}\psi(n)\Delta_M(n+x)\eta\Big(\frac{n}{N}\Big)\eta\Big(\frac{n+x}{N}\Big)=
		\]
		\[
		\frac{1}{\varphi_2(N)^2}\sum_{n\in\mathbb{N}}\psi(n)\sum_{0<|m|\le M}\frac{e^{-2\pi i m\varphi_1(n+x)}}{2 \pi i m}\big(e^{2\pi i m \psi(n+x)}-1\big)\eta\Big(\frac{n}{N}\Big)\eta\Big(\frac{n+x}{N}\Big)=
		\]
		\[
		\frac{1}{\varphi_2(N)^2}\sum_{0<|m|\le M}\frac{1}{2\pi i m}\sum_{\substack{N/2\le n\le 4N\\N/2\le n+x\le 4N}}e^{2\pi i (-m)\varphi_1(n+x)}\bigg(\psi(n)\eta\Big(\frac{n}{N}\Big)\eta\Big(\frac{n+x}{N}\Big)\big(e^{2\pi i m \psi(n+x)}-1\big)\bigg)
		\]
		According to Corollary 3.12 in \cite{W11}, if we let $F_m^x(n)=\psi(n)\eta\Big(\frac{n}{N}\Big)\eta\Big(\frac{n+x}{N}\Big)\big(e^{2\pi i m \psi(n+x)}-1\big)$, we have that for all $m\in \mathbb{Z}\setminus \{0\}$
		\[
		\bigg|\sum_{\substack{N/2< n\le 4N\\N/2< n+x\le 4N}}e^{2\pi i (-m)\varphi_1(n+x)}F_m^x(n)\bigg|\lesssim
		\]
		\[
		|m|^{1/2}N(\varphi_2(N)\sigma(N))^{-1/2}\bigg(\sup_{\substack{N/2< n\le 4N\\N/2< n+x\le 4N}}|F_m^x(n)|+N\sup_{\substack{N/2< n\le 4N\\N/2< n+x\le 4N}}|F_m^x(n+1)-F_m^x(n)|\bigg)
		\] 
		Let us follow the notation of \cite{W11} and write $N_{1,x}=\max\{N/2,N/2-x\}$ and $N_{2,x}=\min\{4N,4N-x\}$. For all $n\in (N_{1,x},N_{2,x}]$ we get
		\[
		|F_m^x(n)|=\psi(n)\eta\Big(\frac{n}{N}\Big)\eta\Big(\frac{n+x}{N}\Big)\big|e^{2\pi i m \psi(n+x)}-1\big|\lesssim \frac{\varphi_2(N)}{N}|2\pi i m \psi(n+x)|\lesssim \frac{\varphi_2(N)^2}{N^2}|m|
		\]
		where we used that for all real numbers $x$ we have $|e^{ix}-1|\le |x|$. Similarly using the mean value theorem and the estimate
		\[
		\bigg|\frac{d\big(F_m^x(t)\big)}{dt}\bigg|\le\Big|\psi'(t)\eta\Big(\frac{n}{N}\Big)\eta\Big(\frac{n+x}{N}\Big)\big(e^{2\pi i m \psi(n+x)}-1\big)\Big|+\Big|\psi(t)\eta'\Big(\frac{n}{N}\Big)\frac{1}{N}\eta\Big(\frac{n+x}{N}\Big)\big(e^{2\pi i m \psi(n+x)}-1\big)\Big|+
		\]
		\[
		\Big|\psi(t)\eta\Big(\frac{n}{N}\Big)\eta'\Big(\frac{n+x}{N}\Big)\frac{1}{N}\big(e^{2\pi i m \psi(n+x)}-1\big)\Big|+\Big|\psi(t)\eta\Big(\frac{n}{N}\Big)\eta\Big(\frac{n+x}{N}\Big)\big((2\pi i m \psi'(n+x))e^{2\pi i m \psi(n+x)}\big)\Big|\lesssim
		\]
		\[
		\frac{\varphi_2(N)^2}{N^3}|m|
		\]
		Therefore we can bound $I_2$
		\[
		|I_2(x)|\lesssim \frac{1}{\varphi_2(N)^2}\sum_{0<|m|\le M}\frac{1}{|m|}|m|^{1/2}N(\varphi_2(N)\sigma(N))^{-1/2}\bigg(\frac{\varphi_2(N)^2}{N^2}|m|\bigg)
		\]
 Since $\gamma\in (29/30,1)$, we get that for $M=N^{1+2\chi+\varepsilon}\varphi_2(N)^{-1}$, $\chi=1-\gamma$ and $\varepsilon<\chi/10$ we get 
		\[
		|I_2(x)|\lesssim \frac{M^{3/2}}{N\varphi_2(N)^{1/2}\sigma(N)^{1/2}}=\frac{N^{3/2+5/2\chi+3\varepsilon}}{N^{1+\chi}\varphi_2(N)^{2}\sigma(N)^{1/2}}
		\]
		We use that for all $\varepsilon_1>0$ we have $\sigma(x)\gtrsim_{\varepsilon_1} x^{-\varepsilon_1}$ and $\varphi_2(x)\gtrsim_{\varepsilon_1} x^{\gamma -\varepsilon_1}$ to get
		\[
		|I_2(x)|\lesssim \frac{1}{N^{1+\chi}}N^{3/2+5/2\chi+3\varepsilon-2\gamma+2\varepsilon_1+\varepsilon_1/2}
		\]
		For a fixed $\varepsilon_1=\varepsilon\in(0,\chi/10)$ we get
		\[
		|I_2(x)|\lesssim N^{-1-\chi}N^{3/2-2\gamma+6\varepsilon}\lesssim N^{-1-\chi}
		\]
		since
		\[
		3/2+5/2\chi-2\gamma+6\varepsilon<0\iff 3+5\chi-4\gamma+4\varepsilon<0\iff 4(1-\gamma)+5(1-\gamma)+6/10(1-\gamma)<1 \iff
		\]
		
		\[
		96/10(1-\gamma)<1 
		\] which is true since $\gamma>29/30$.
		Therefore we have show that $|I_2(x)|\lesssim N^{-1-\chi}$, as desired. The term $I_2$ is treated similarly
		\[
		I_4(x)=\frac{1}{\varphi_2(N)^2}\sum_{n\in\mathbb{N}}\psi(n+x)\Delta_M(n)\eta\Big(\frac{n}{N}\Big)\eta\Big(\frac{n+x}{N}\Big)=
		\]
		\[
		\frac{1}{\varphi_2(N)^2}\sum_{n\in\mathbb{N}}\psi(n+x)\sum_{0<|m|\le M}\frac{e^{-2\pi i m\varphi_1(n)}}{2 \pi i m}\big(e^{2\pi i m \psi(n)}-1\big)\eta\Big(\frac{n}{N}\Big)\eta\Big(\frac{n+x}{N}\Big)=
		\]
		\[
		\frac{1}{\varphi_2(N)^2}\sum_{0<|m|\le M}\frac{1}{2\pi i m}\sum_{\substack{N/2\le n\le 4N\\N/2\le n+x\le 4N}}e^{2\pi i (-m)\varphi_1(n)}\bigg(\psi(n+x)\eta\Big(\frac{n}{N}\Big)\eta\Big(\frac{n+x}{N}\Big)\big(e^{2\pi i m \psi(n)}-1\big)\bigg)
		\]
		Using Corollary 3.12 in \cite{W11}, for $G_m^x(n)=\psi(n+x)\eta\Big(\frac{n}{N}\Big)\eta\Big(\frac{n+x}{N}\Big)\big(e^{2\pi i m \psi(n)}-1\big)$, we obtain in an almost identical fashion the bound $|I_4(x)|\lesssim N^{-1-\chi}$. We will now deal with $I_3,I_5,I_7,I_8,I_9$. We are going to follow the recipe of \cite{W11}, and we are going to use Lemma 3.18~\cite{W11}. Let's state it here.
		\begin{lemma}
			Let $N\ge 1$, $p,q\in\{0,1\}$, $x\in\mathbb{Z}$ and $M\ge 1$. Then
			\[
			\sum_{n\in\mathbb{N}} \min\bigg\{1,\frac{1}{M\|\varphi_1(n+px+q)\|}\bigg\}\eta\Big(\frac{n}{N}\Big)\eta\Big(\frac{n+x}{N}\Big)\lesssim \frac{N\log(M)}{M}+\frac{NM^{1/2}\log(M)}{(\sigma_1(N)\varphi_1(N))^{1/2}}
			\]  
		\end{lemma}
		We will use the lemma above as well as the appropriate extension of the lemma for our needs.
		
		\begin{lemma}
			Let $N\ge 1$, $p\in\{0,1\}$, $x\in\mathbb{Z}$ and $M\ge 1$. Then
			\[
			\sum_{n\in\mathbb{N}} \min\bigg\{1,\frac{1}{M\|\varphi_1(n+px)-\psi(n+px)\|}\bigg\}\eta\Big(\frac{n}{N}\Big)\eta\Big(\frac{n+x}{N}\Big)\lesssim \frac{N\log(M)}{M}+\frac{NM^{1/2}\log(M)}{(\sigma_1(N)\varphi_1(N))^{1/2}}
			\]  
		\end{lemma}
		\begin{proof}
			We have that 
			\[
			\min\bigg\{1,\frac{1}{M\|x\|}\bigg\}=\sum_{m\in\mathbb{Z}}b_me^{2\pi i m x}
			\]
			and
			\[
			b_m\lesssim \min\bigg\{\frac{\log(M)}{M},\frac{1}{|m|},\frac{M}{|m|^2}\bigg\}
			\]
			Thus 
			\[
			\sum_{n\in\mathbb{N}} \min\bigg\{1,\frac{1}{M\|\varphi_1(n+px)-\psi(n+px)\|}\bigg\}\eta\Big(\frac{n}{N}\Big)\eta\Big(\frac{n+x}{N}\Big)\lesssim
			\]
			\[
			\sum_{n+px\in(N/2,4N]} \min\bigg\{1,\frac{1}{M\|\varphi_1(n+px)-\psi(n+px)\|}\bigg\}=
			\]
			\[
			\sum_{n+px\in(N/2,4N]}\sum_{m\in\mathbb{Z}}b_me^{2\pi i m(\varphi_1(n+px)-\psi(n+px)) }\lesssim \sum_{m\in\mathbb{Z}}|b_m|\bigg|\sum_{n+px\in(N/2,4N]}e^{2\pi i m (\varphi_1(n+px)-\psi(n+px))}\bigg|\lesssim 
			\]
			
			\[
			\frac{N\log(M)}{M}+ \sum_{m\in\mathbb{Z}}|b_m||m|^{1/2}N\big(\varphi_1(N)\sigma_1(N)\big)^{-1/2}\lesssim \frac{N\log(M)}{M}+\sum_{0<|m|\le M} \frac{\log(M)}{M}|m|^{1/2}N\big(\varphi_1(N)\sigma_1(N)\big)^{-1/2}\]
			
			\[
			+\sum_{|m|> M}\frac{M}{|m|^2}|m|^{1/2}N\big(\varphi_1(N)\sigma_1(N)\big)^{-1/2}\lesssim \frac{N\log(M)}{M}+ N\log(M)M^{1/2}\big(\varphi_1(N)\sigma_1(N)\big)^{-1/2}
			\]
			where we used Lemma~4.1 from \cite{RTP} to obtain the estimate for $\big|\sum_{n+px\in(N/2,4N]}e^{2\pi i m (\varphi_1(n+px)-\psi(n+px))}\big|$.
		\end{proof}
		Using the two lemmas above, together with the trivial estimates $\psi(x),|\Delta_M(x)|,|\Pi_M(x)|\lesssim 1$ (since $1_B(n)=\psi(n)+\Delta_M(n)+\Pi_M(n)$) we may estimate
		\[
		|I_3(x)|+|I_5(x)|+|I_7(x)|+|I_8(x)|+|I_9(x)|\lesssim
		\]
		\[ \frac{1}{\varphi_2(N)^2}\sum_{n\in\mathbb{N}}\sum_{p\in\{0,1\}}\bigg(\min\bigg\{1,\frac{1}{M\|\varphi_1(n+px)-\psi(n+px)\|}\bigg\}+\min\bigg\{1,\frac{1}{M\|\varphi_1(n+px)\|}\bigg\}\bigg)\eta\Big(\frac{n}{N}\Big)\eta\Big(\frac{n+x}{N}\Big)\lesssim
		\]
		\[
		\frac{1}{\varphi_2(N)^2}\bigg(\frac{N\log(M)}{M}+\frac{NM^{1/2}\log(M)}{(\sigma_1(N)\varphi_1(N))^{1/2}}\bigg)\lesssim \frac{N\log(N)}{\varphi_2(N)N^{1+2\chi+\varepsilon}}+\frac{N^{3/2+\chi+\varepsilon/2}\log(N)}{\varphi_2(N)^3\sigma_1(N)^{1/2}}\lesssim
		\]
		\[
		N^{-1-\chi}\log(N)N^{1-\varepsilon-\gamma-\chi+\varepsilon/2}+N^{-1-\chi}\big(N^{5/2+2\chi+\varepsilon/2-3\gamma+3\varepsilon/2+\varepsilon}\log(N)\big)\lesssim N^{-1-\chi}
		\]
		since $\chi=1-\gamma$ and $5/2+2\chi-3\gamma+3\varepsilon<0\iff 5/2+2\chi+3(1-\gamma)+3\varepsilon<3\iff 5(1-\gamma)<1/2$
		$\iff 10(1-\gamma)<1$ which is true since $\gamma\in (29/30,1)$.
		Finally, for $I_6$, we get
		\[
		I_6(x)=\frac{1}{\varphi_2(N)^2}\sum_{n\in\mathbb{N}}\Delta_M(n)\Delta_M(n+x)\eta\Big(\frac{n}{N}\Big)\eta\Big(\frac{n+x}{N}\Big)=
		\]
		\[
		\frac{1}{\varphi_2(N)^2}\sum_{n\in\mathbb{N}}\sum_{0<|m_1|,|m_2|\le M}\frac{e^{-2\pi i m_1\varphi_1(n)}}{2 \pi i m_1}\big(e^{2\pi i m_1 \psi(n)}-1\big)\frac{e^{-2\pi i m_2\varphi_1(n+x)}}{2 \pi i m_2}\big(e^{2\pi i m_2 \psi(n)}-1\big)\eta\Big(\frac{n}{N}\Big)\eta\Big(\frac{n+x}{N}\Big)=
		\]
		\[
		\frac{1}{\varphi_2(N)^2}\sum_{0<|m_1|,|m_2|\le M}\frac{1}{(2\pi i)^2m_1m_2}\sum_{N_{1,x}<n\le N_{2,x}}e^{-2\pi i m_1\varphi_1(n)-2\pi im_2\varphi_1(n+x)}F^x_{m_1,m_2}(n)
		\]
		where $F^x_{m_1,m_2}(n)= \big(e^{2\pi i m_1 \psi(n)}-1\big)\big(e^{2\pi i m_2 \psi(n)}-1\big)\eta\Big(\frac{n}{N}\Big)\eta\Big(\frac{n+x}{N}\Big)$. Therefore we have that
		\[
		|I_6(x)|\lesssim \frac{1}{\varphi_2(N)^2}\sum_{0<|m_1|,|m_2|\le M} \frac{1}{|m_1m_2|}\bigg|\sum_{N_{1,x}<n\le N_{2,x}}e^{-2\pi i m_1\varphi_1(n)-2\pi im_2\varphi_1(n+x)}F^x_{m_1,m_2}(n)\bigg|
		\]
		For all $n\in(N_{1,x},N_{2,x}]$ we have $|F^x_{m_1,m_2}(n)|\lesssim |m_1m_2|\psi(N)^2\lesssim |m_1m_2|\frac{\varphi_2(N)^2}{N^2}$ and also by the mean value theorem together with the following calculation 
		\[
		\Big|\frac{d\big(F_{m_1,m_2}^x(t)\big)}{dt}\Big|\lesssim |m_1|\psi'(N)|m_2|\psi(N)+|m_1|\psi(N)|m_2|\psi'(N)+1/N|m_1m_2|\psi(N)^2\lesssim |m_1m_2|\frac{\varphi_2(N)^2}{N^3}
		\]
		we get $\sup_{N_{1,x}<n\le N_{2,x}} \{|F_{m_1,m_2}^x(n+1)-F_{m_1,m_2}^x(n)|\}\lesssim |m_1m_2|\frac{\varphi_2(N)^2}{N^3} $. We let $m=\max\{m_1,m_2\}$ and we use Corollary 3.12 from \cite{W11} for $\alpha=0$ and $\kappa=1$ to obtain
		\[
		\bigg|\sum_{N_{1,x}<n\le N_{2,x}}e^{-2\pi i m_1\varphi_1(n)-2\pi im_2\varphi_1(n+x)}F^x_{m_1,m_2}(n)\bigg|\lesssim
		\]
		\[ \max\{m_1,m_2\}^{2/3}N^{4/3}\sigma_1(N)^{-1/3}\varphi_1(N)^{-2/3}|m_1m_2|\varphi_2(N)^2N^{-2} 
		\]
		We can now finish our estimates for $I_6$
		\[
		|I_6(x)|\lesssim \sum_{0<|m_1|,|m_2|\le M}\max\{m_1,m_2\}^{2/3}N^{4/3}\sigma_1(N)^{-1/3}\varphi_1(N)^{-2/3}N^{-2}=\frac{\sum_{0<|m_1|,|m_2|\le M}\max\{m_1,m_2\}^{2/3}}{\varphi_1(N)^{2/3}N^{2/3}\sigma_1(N)^{1/3}}\lesssim
		\]
		\[
		\frac{M^{8/3}}{\varphi_1(N)^{2/3}N^{2/3}\sigma_1(N)^{1/3}}\lesssim_{\varepsilon_1} \frac{N^{8/3+16/3\chi+8/3\varepsilon}}{\varphi_1(N)^{10/3}N^{2/3}N^{-\varepsilon_1}}\lesssim_{\varepsilon_1} \frac{N^{8/3+16/3\chi+8/3\varepsilon}}{N^{10/3\gamma-\varepsilon_1}N^{2/3}N^{-\varepsilon_1}} 
		\]
		We wish to have that $8/3+16/3\chi+8/3\varepsilon-10/3\gamma+2\varepsilon_1-2/3\le -1-\chi$ but we have that
		\[
		8/3+16/3\chi+8/3\varepsilon-10/3\gamma+2\varepsilon_1-2/3\le -1-\chi\iff
		\] 
		\[
		10(1-\gamma)+19\chi+8\varepsilon+6\varepsilon_1\le 1
		\]And we can choose $\varepsilon_1>0$ to make this true. 
	\end{proof}
	
	We use Lemma~$\ref{1Aprox}$ and Lemma~$\ref{2Aprox}$ to prove the weak-type (1,1) estimates of Theorem $\ref{WT11}$. We state and prove a general Theorem that allows us to conclude. It is a natural extension of Theorem 6.1 in \cite{W11}, and the novelty lies in our handling of the problematic initial part of $K_N*\widetilde{K}_N$.
	
	\begin{theorem}\label{absThm}
		Let $\mathcal{M}f(x)=\sup_{n\in\mathbb{N}}|K_n*f(x)|$ be the maximal function corresponding to a family of nonnegative kernels $\big(K_n\big)_{n\in\mathbb{N}}\subseteq \ell^1(\mathbb{Z})$ such that $||\mathcal{M}f||_{\ell^{\infty}(\mathbb{Z})}\lesssim ||f||_{\ell^{\infty}(\mathbb{Z})}$ for all $f\in \ell^{\infty}(\mathbb{Z})$ and let $\big(F_n\big)_{n\in\mathbb{Z}}$ be a family of nonnegative functions. Assume that there are sequences $(d_n)_{n\in\mathbb{N}}$, $(D_n)_{n\in\mathbb{N}}\subseteq [1,\infty)$ such that $|supp(K_n)|=d_n$, $supp(K_n)\subseteq[0,D_n]$, $supp(F_n)\subseteq [-D_n,D_n]$, $d_n\le D_n^{\varepsilon_0}$ for some $\varepsilon_0\in (0,1)$ and assume there exists a finite constant $M>1$ such that $Md_n\le d_{n+1}$ and $MD_n\le D_{n+1}\le 2^{n+1}$ for all $n\in\mathbb{N}$. Also, assume that exists a real number $\varepsilon_1>0$ such that for all $n\in\mathbb{N}$ and $x\in\mathbb{Z}$ we have
		\[
		|K_n*\widetilde{K}_n(x)-F_n(x)|\lesssim D_n^{-1-\varepsilon_1}
		\]
		and assume that there exists a constant $A>0$ such that
		\begin{equation}\label{eq1}
			F_n(x)\lesssim d_n^{-1} \text{ for all }x\text{ with }|x|\le A\text{ and } |F_n(x)|\lesssim D_n^{-1}\text{ for all }x\text { with }|x|>A
		\end{equation}
		Finally, assume that there exists an $\varepsilon_2\in (0,1]$ such that 
		\begin{equation}\label{eq2}
			|F_n(x+y)-F_n(x)|\lesssim  D_n^{-2}|y|\text{ whenever }|x|,|x+y|\gtrsim d_n^{\varepsilon_2}
		\end{equation}
		Then we have that there exists a constant $C>0$ such that
		\[
		||\mathcal{M}f||_{\ell^{1,\infty}(\mathbb{Z})}\le C ||f||_{\ell^1(\mathbb{Z})}\text{ for all }f\in\ell^1(\mathbb{Z})
		\]
	\end{theorem}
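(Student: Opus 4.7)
The plan is to adapt Fefferman's refined Calder\'on--Zygmund decomposition to this abstract setting, following the blueprint of \cite{ntoc} and Theorem~6.1 of \cite{W11}. Fix $\lambda>0$ and perform the classical CZ decomposition of $|f|$ at a level $c\lambda$, with $c$ chosen small enough (depending on the $\ell^\infty\to\ell^\infty$ operator norm of $\mathcal{M}$) so that the good part satisfies $\|\mathcal{M}g\|_\infty\le \lambda/2$ by virtue of the hypothesis $\|\mathcal{M}f\|_{\ell^\infty}\lesssim \|f\|_{\ell^\infty}$. This produces $f=g+b$ with $b=\sum_{Q\in\mathcal{Q}}b_Q$, $\mathrm{supp}(b_Q)\subseteq Q$, $\int b_Q=0$, $\|b_Q\|_1\lesssim \lambda|Q|$, and $\sum_Q|Q|\lesssim \lambda^{-1}\|f\|_1$. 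Removing a suitable enlargement $\Omega=\bigcup_Q Q^*$ with $|\Omega|\lesssim \lambda^{-1}\|f\|_1$, the task reduces by Chebyshev to proving the $\ell^2$ bound $\|\mathcal{M}b\|_{\ell^2(\Omega^c)}^2\lesssim \lambda\|f\|_1$.

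To get this I would split the bad part in a Fefferman-style way. Decompose $b=b^\flat+b^\sharp$ so that $b^\sharp$ (the ``very bad'' part) gathers the contributions from cube/scale pairs $(Q,n)$ with $D_n\lesssim |Q|$, and $b^\flat$ gathers the remaining ``regular'' contributions with $D_n\gg |Q|$. For $b^\sharp$ I would invoke the TT$^*$ identity
\[
\sum_x|K_n*b^\sharp(x)|^2=\sum_{y,z}(K_n*\widetilde{K}_n)(y-z)\,b^\sharp(y)\,\overline{b^\sharp(z)}
\]
and substitute $K_n*\widetilde{K}_n=F_n+O(D_n^{-1-\varepsilon_1})$; combining the pointwise bound $F_n(x)\lesssim D_n^{-1}$ for $|x|>A$, the standard CZ bound $\|b^\sharp\|_2^2\lesssim \lambda\|f\|_1$, and the lacunarity $D_{n+1}\ge MD_n$ (which makes the error $D_n^{-1-\varepsilon_1}$ summable in $n$) produces the desired $\ell^2$ bound for the very bad part. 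For $b^\flat$ I would exploit the mean-zero property of each $b_Q$ together with the Lipschitz bound (\ref{eq2}) on $F_n$: here the TT$^*$ pairing forces the relevant differences of $F_n$ to live in the regime $|x|\gtrsim d_n^{\varepsilon_2}$ where (\ref{eq2}) is valid, because $x\in\Omega^c$ is well separated from $Q$ and because the scales are arranged by the splitting so that $D_n\gg|Q|$.

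The main obstacle, and the reason the statement of Theorem~\ref{absThm} differs from Theorem~6.1 of \cite{W11}, is the behaviour of $F_n$ on the initial block $\{|x|\le A\}$, where only the weaker bound $F_n(x)\lesssim d_n^{-1}$ (much larger than the generic $D_n^{-1}$) is available. This block is the abstract footprint of the long runs of consecutive integers in $B$ that Lemma~\ref{1Aprox} is unable to rule out when $\varphi_1\simeq \varphi_2$. To absorb it I would isolate the initial block from the Lipschitz region in the analysis of $b^\flat$, and push the contribution of the block into the $\ell^2$ estimate for $b^\sharp$ (using that $A$ is an absolute constant independent of $n$, and that $d_n\le D_n^{\varepsilon_0}$ with $\varepsilon_0<1$, so that the extra mass $Ad_n^{-1}$ is still controlled through the TT$^*$ computation). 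Once the regular and very bad pieces are both estimated by $C\lambda\|f\|_1$ in $\ell^2(\Omega^c)$, Chebyshev and the bound on $|\Omega|$ combine to yield the weak-type $(1,1)$ inequality.
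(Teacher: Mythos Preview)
Your outline has the right spirit (CZ $+$ $TT^*$ $+$ special handling of the block $|x|\le A$), but the crucial ingredient is misidentified, and as a consequence the argument as written does not close.

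\textbf{The Fefferman split is a level-set split, not a cube-size split.} What you describe as the ``Fefferman-style'' decomposition---sorting cube/scale pairs by whether $D_n\lesssim|Q|$ or $D_n\gg|Q|$---is just the ordinary H\"ormander-type dichotomy; the case $D_n\lesssim|Q|$ is already absorbed by the exceptional set $\Omega$ you removed, so there is nothing to gain by running $TT^*$ there. Fefferman's actual idea, and the one the paper uses, is a further \emph{value-based} split of each $b_s$ depending on $n$:
\[
b_s=b_s^n+h_s^n,\qquad b_s^n=b_s\,1_{\{|b_s|>\alpha d_n\}},\qquad h_s^n=b_s\,1_{\{|b_s|\le \alpha d_n\}},
\]
followed by recentering $h_s^n=g_s^n+B_s^n$ so that $B_s^n$ has mean zero on each cube. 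This is where the parameter $d_n=|\mathrm{supp}(K_n)|$ enters the proof. The ``very bad'' piece $b_s^n$ has support of size at most $\|b_s\|_{\ell^1}/(\alpha d_n)$ by Chebyshev, and since $|\mathrm{supp}(K_n)|=d_n$ one gets $|\mathrm{supp}(K_n*b_s^n)|\le\|b_s\|_{\ell^1}/\alpha$; lacunarity of $(d_n)$ then sums this over $n$. The piece $B_s^n$ satisfies $\|B_s^n\|_{\ell^\infty}\lesssim\alpha d_n$, and it is \emph{this} $\ell^\infty$ bound---absent from your sketch---that makes the $TT^*$ computation go through for the small-cube range $s<s(n)$.

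\textbf{The claimed bound $\|b^\sharp\|_{\ell^2}^2\lesssim\lambda\|f\|_{\ell^1}$ is false.} Bad functions from a CZ decomposition carry no $\ell^2$ control whatsoever; only the good part does. The $\ell^2$ input needed for $TT^*$ comes precisely from the level-set truncation above, which caps $|B_s^n|$ by $\alpha d_n$.

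\textbf{The initial block.} In the paper, after the level-set split, one writes $F_n 1_{\{|x|\le A\}}=\sum_{|j|\le A}F_n(j)\delta_j$, so the block's contribution to $\langle K_n*\widetilde{K}_n*B_{s_1}^n,B_{s_2}^n\rangle$ is $\lesssim d_n^{-1}\sum_{|j|\le A}|\langle\delta_j*B_{s_1}^n,B_{s_2}^n\rangle|$. These shifted inner products are then controlled pointwise: for fixed $x$ and $j$ there is at most one $s$ with $x\in\mathrm{supp}(b_s)$ and at most one $s'$ with $x-j\in\mathrm{supp}(b_{s'})$, and the geometric sum $\sum_n d_n^{-1}$ converges by lacunarity. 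Your description (``push the block into the $\ell^2$ estimate for $b^\sharp$'') does not capture this mechanism and, without the level-set split giving the factor $d_n^{-1}$ something to play against, would not succeed.
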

Before proving the Theorem let us briefly show how it implies the weak-type (1,1) bound. 
\begin{proof}[Proof of Theorem $\ref{WT11}$]

By letting $K_n(x)=\frac{1}{\varphi_2(2^n)}\sum_{k\in B}\delta_k(x)\eta\Big(\frac{k}{2^n}\Big)$, $d_n\simeq\varphi_1(2^n)$, $D_n\simeq2^n$ we can apply the theorem for 
	\[
	F_n(x)=\left\{\begin{array}{ll}
		(K_n*\widetilde{K}_n)(x),& 0\le |x| \le \varphi_1(2^n) \\
		G_{2^n}(x), &|x|>\varphi_1(2^n)\\
	\end{array} 
	\right.
	\]
	Lemma~$\ref{2Aprox}$ guarantees the existence of a real number $\varepsilon_1>0$ such that $|K_n*\widetilde{K}_n(x)-F_n(x)|\lesssim D_n^{-1-\varepsilon_1}$ for $|x|> \varphi_1(2^n)$ and for smaller values of $x$ the estimate is trivially established from the definition of $F_n$. We also have
	\[
	K_n*\widetilde{K}_n(x)=\frac{1}{\varphi_2(2^n)^2}\sum_{k\in\mathbb{N}}1_B(k)1_B(k+x)\eta\Big(\frac{k}{2^n}\Big)\eta\Big(\frac{k+x}{2^n}\Big)
	\]
	and thus if $C$ is the constant appearing in Lemma~$\ref{1Aprox}$, for all integers $x$ such that $|x|\le C$, we get that
	\[
	|F_n(x)|=|K_n*\widetilde{K}_n(x)|\lesssim \frac{1}{\varphi_1(2^n)}\simeq d_n^{-1}
	\]
	and for all $x$ such that $C\le|x|\le \varphi_1(2^n)$ we get 
	\[
	|F_n(x)|=|G_{2^n}(x)|\lesssim 2^{-n}\simeq D_n^{-1}
	\]
	We can conclude by letting $\varepsilon_2=1$, and using the estimates from Lemma~$\ref{1Aprox}$ and Lemma~$\ref{2Aprox}$. This completes the proof.
	\end{proof}
	\begin{proof}[Proof of Theorem $\ref{absThm}$]
	Let $f\in\ell^1(\mathbb{Z})$ such that $f\ge 0$ and let $\alpha>0$. We will perform a subtle variation of the Calder\'on-Zygmund decomposition. There exists a family of disjoint dyadic cubes $(Q_{s,j})_{(s,j)\in\mathcal{B}}$, where $\mathcal{B}\subseteq \mathbb{N}_0\times \mathbb{Z}$ and $Q_{s,j}=[j2^s,(j+1)2^s)\cap\mathbb{Z}$ and functions $g,b$ such that 
	\begin{itemize}
		\item $f=g+b$
		\item $\|g\|_{\ell^1(\mathbb{Z})}\le \|f\|_{\ell^1(\mathbb{Z})}\text{ and }\|g\|_{\ell^{\infty}(\mathbb{Z})}\le 2\alpha$
		\item $b=\sum_{(s,j)\in\mathcal{B}}b_{s,j}\text{ where }b_{s,j}	\text{ is supported on }Q_{s,j}$
		\item $\sum_{x\in Q_{s,j}}b_{s,j}(x)=0$
		\item $\|b_{s,j}\|_{\ell^1(\mathbb{Z})}\le 4\alpha |Q_{s,j}|$
		\item $\sum_{(s,j)\in \mathcal{B}}|Q_{s,j}|\le \alpha^{-1}\|f\|_{\ell^1(\mathbb{Z})}$
	\end{itemize}
	For every $s\ge 0$ we let 
	\[
	b_s=\sum_{\substack{j\in \mathbb{Z}:\\(s,j)\in\mathcal{B}}}b_{s,j}
	\]
	and for every $n\in\mathbb{N}_0$ we decompose further
	\begin{itemize}
		\item $b_s^n(x)=b_s(x)1_{\{y\in\mathbb{Z}:\,|b_s(y)|>\alpha d_n\}}(x)$
		\item $h_s^n(x)=b_s^n(x)-b_s^n(x)=b_s(x)1_{\{y\in\mathbb{Z}:\,|b_s(y)|\le\alpha d_n\}}(x)$
		\item \[g_s^n(x)=\sum_{\substack{j\in \mathbb{Z}:\\(s,j)\in\mathcal{B}}} [h^n_s]_{Q_{s,j}}1_{Q_{s,j}}
		\]
		
		\item
		\[
		B_s^n(x)=h_s^n(x)-g_s^n(x)=\sum_{\substack{j\in \mathbb{Z}:\\(s,j)\in\mathcal{B}}} \big(h^n_s-[h^n_s]_{Q_{s,j}}\big)1_{Q_{s,j}}
		\]	 
	\end{itemize}
Let $s(n)=\min\{s\in\mathbb{N}_0\,:\,2^s\ge D_n\}$ and decompose $f$ as $g+\sum_{s\ge 0}b_s=g+\sum_{s\ge 0}(b_s^n+g_s^n+B_s^n)=\bigg(g+\sum_{s\ge 0}g^n_{s}\bigg)+\sum_{s\ge 0}b^n_s+\sum_{s=0 }^{s(n)-1}B^n_s+\sum_{s=s(n) }^{\infty}B^n_s$.
	We have that
\[
\Big|\Big\{x\in\mathbb{Z}\,:\,\sup_{n\in\mathbb{N}}|K_n*f(x)|>C\alpha\Big\}\Big|\le
\]
\[
\Big|\Big\{x\in\mathbb{Z}\,:\,\sup_{n\in\mathbb{N}}|K_n*\Big(g+\sum_{s\ge 0}g^n_s\Big)(x)|>C\alpha/4\Big\}\Big|+\Big|\Big\{x\in\mathbb{Z}\,:\,\sup_{n\in\mathbb{N}}|K_n*\Big(\sum_{s\ge 0}b^n_s\Big)(x)|>C\alpha/4\Big\}\Big|+
\]
\[
\Big|\Big\{x\in\mathbb{Z}\,:\,\sup_{n\in\mathbb{N}}|K_n*\Big(\sum_{s=s(n) }^{\infty}B^n_s\Big)(x)|>C\alpha/4\Big\}\Big|+\Big|\Big\{x\in\mathbb{Z}\,:\,\sup_{n\in\mathbb{N}}|K_n*\Big(\sum_{s=0 }^{s(n)-1}B^n_s\Big)(x)|>C\alpha/4\Big\}\Big|
\]
and our treatment will be different for each summand. The following subsections are devoted to this task, and the most difficult part will be to bound the final one, where we will exploit the cancellation properties of $B^n_s$ together with the properties of $F_n$.
\subsection{Estimates for the first three summands}

For the good part we will use $\ell^{\infty}$-bounds together with the fact that $\|\mathcal{M}\|_{\ell^{\infty}\to\ell^{\infty}}=T<\infty$. Note that
\[
\big|\sum_{s\ge 0}g^n_s(x)\big|\le \sum_{(s,j)\in\mathcal{B}}|[h^n_s]_{Q_{s,j}}1_{Q_{s,j}}(x)|\le  \sum_{(s,j)\in\mathcal{B}}[|h^n_s|]_{Q_{s,j}}1_{Q_{s,j}}(x)\le \sum_{(s,j)\in\mathcal{B}}[|b_{s,j}|]_{Q_{s,j}}1_{Q_{s,j}}(x) \le
\] 
\[
\sum_{(s,j)\in\mathcal{B}}\|b_{s,j}\|_{\ell^1(\mathbb{Z})}|Q_{s,j}|^{-1}1_{Q_{s,j}}(x)\le \sum_{(s,j)\in\mathcal{B}}4\alpha 1_{Q_{s,j}}(x)\le 4\alpha
\]
and $g(x)\le 2\alpha$ for all $x\in\mathbb{Z}$, and thus $\|g+\sum_{s\ge 0}g^n_s\|_{\ell^{\infty}(\mathbb{Z})}\le 6\alpha$ and thus $\big|K_n*\big(g+\sum_{s\ge 0}g^n_s\big)(x)\big|\le T6\alpha$ for all $x\in\mathbb{Z}$ and $n\in\mathbb{N}$. Thus for any $C>24T$ we get
\[
\Big|\Big\{x\in\mathbb{Z}\,:\,\sup_{n\in\mathbb{N}}|K_n*\Big(g+\sum_{s\ge 0}g^n_s\Big)(x)|>C\alpha/4\Big\}\Big|=0
\]
since it is the empty set.

For the second summand we use the lacunary nature of $(d_n)_{n\in\mathbb{N}}$ as well as the bounds for the cardinality of the support of $K_n$. Specifically, we have
\[
\Big|\Big\{x\in\mathbb{Z}\,:\,\sup_{n\in\mathbb{N}}|K_n*\Big(\sum_{s\ge 0}b^n_s\Big)(x)|>C\alpha/4\Big\}\Big|\le \Big|\bigcup_{n\in\mathbb{N}}\bigcup_{s\in\mathbb{N}_0}\supp{K_n*|b^n_s|}\Big|\le \sum_{n\in\mathbb{N}}\sum_{s\in\mathbb{N}_0}|\supp K_n*|b_s^n|\le
\]
\[
\sum_{n\in\mathbb{N}}\sum_{s\in\mathbb{N}_0}|\supp K_n|\cdot| \supp b_s^n|\le \sum_{n\in\mathbb{N}}\sum_{s\in\mathbb{N}_0}d_n| \{x\in\mathbb{Z}\,:\,|b_s(x)|>\alpha d_n\}|
=
\]
\[
\sum_{n\in\mathbb{N}}\sum_{s\in\mathbb{N}_0}d_n\sum_{k\ge n}| \{x\in\mathbb{Z}\,:\,\alpha d_{k+1}\ge|b_s(x)|>\alpha d_k\}|=
\]
\[
\sum_{s\in\mathbb{N}_0}\sum_{k\in\mathbb{N}}\Big(
\sum_{n=1}^k d_n\Big)| \{x\in\mathbb{Z}\,:\,\alpha d_{k+1}\ge|b_s(x)|>\alpha d_k\}|\lesssim_M
\]
\[
\alpha^{-1} \sum_{s\in\mathbb{N}_0}\sum_{k\in\mathbb{N}}\alpha d_k | \{x\in\mathbb{Z}\,:\,\alpha d_{k+1}\ge|b_s(x)|>\alpha d_k\}|\le \alpha^{-1}\sum_{s\in\mathbb{N}_0}\|b_s\|_{\ell^1(\mathbb{Z})} \lesssim \alpha^{-1}\|f\|_{\ell^1(\mathbb{Z})}\]

For the third summand we simply use the fact for any $n\in\mathbb{N}$ and any $s\ge s(n)$ we get that $2^s\ge D_n$ and thus 
\[
\supp(K_n*B^n_s)\subseteq\supp(K_n)+\supp(B^n_s)\subseteq [0,D_n]+\bigcup_{\substack{j\in\mathbb{Z}\\(s,j)\in\mathcal{B}}} Q_{s,j} \subseteq [0,2^s]+\bigcup_{\substack{j\in\mathbb{Z}\\(s,j)\in\mathcal{B}}} Q_{s,j}\subseteq \bigcup_{\substack{j\in\mathbb{Z}\\(s,j)\in\mathcal{B}}} 3Q_{s,j}
\]
where $3Q$ denotes the interval with the same center as $Q$ and three times its radius. Therefore
\[
\Big|\Big\{x\in\mathbb{Z}\,:\,\sup_{n\in\mathbb{N}}|K_n*\Big(\sum_{s=s(n) }^{\infty}B^n_s\Big)(x)|>C\alpha/4\Big\}\Big|=\Big|\bigcup_{n\in\mathbb{N}}\bigcup_{s\ge s(n)}\supp(K_n*B^n_s)\Big|\le \Big|\bigcup_{n\in\mathbb{N}}\bigcup_{s\ge s(n)}\bigcup_{\substack{j\in\mathbb{Z}\\(s,j)\in\mathcal{B}}} 3Q_{s,j}\Big|\le 
\]
\[
\Big|\bigcup_{(s,j)\in\mathcal{B}} 3Q_{s,j}\Big|\lesssim \alpha^{-1}\|f\|_{\ell^1(\mathbb{Z})}
\]

\subsection{Estimates for the fourth summand}\label{RBADPART}The fourth summand is the most difficult to estimate and here we will use the regularity of $K_n*\widetilde{K}_n$. We have
\[
\Big|\Big\{x\in\mathbb{Z}\,:\,\sup_{n\in\mathbb{N}}|K_n*\Big(\sum_{s=0 }^{s(n)-1}B^n_s\Big)(x)|>C\alpha/4\Big\}\Big|\lesssim \alpha^{-2}\sum_{x\in\mathbb{Z}}\sup_{n\in\mathbb{N}}|K_n*\Big(\sum_{s=0 }^{s(n)-1}B^n_s\Big)(x)|^2\le
\]
\[
\alpha^{-2}\sum_{x\in\mathbb{Z}}\sum_{n\in\mathbb{N}}|K_n*\Big(\sum_{s=0 }^{s(n)-1}B^n_s\Big)(x)|^2=\alpha^{-2}\sum_{n\in\mathbb{N}}\Big\|\sum_{s=0 }^{s(n)-1}K_n*B^n_s\Big\|_{\ell^2(\mathbb{Z})}^2=
\]
\[
\alpha^{-2}\sum_{n\in\mathbb{N}}\bigg(\sum_{s=0 }^{s(n)-1}\|K_n*B^n_s\|_{\ell^2(\mathbb{Z})}^2+2\sum_{0\le s_1<s_2\le s(n)-1}\langle K_n*B^n_{s_1},K_n*B^n_{s_2} \rangle_{\ell^2(\mathbb{Z})}\bigg)
\]
We need the following result to conclude.

\begin{claim}\label{CC}There exists $0<\lambda<1$ such that for all  $n\in\mathbb{N}$ and $0\le s_1\le s_2\le s(n)-1$ we get
\begin{equation}\label{difcompact}
\big|\langle K_n*B^n_{s_1},K_n*B^n_{s_2} \rangle_{\ell^2(\mathbb{Z})}\big|\lesssim \lambda^{s(n)-s_1}\alpha \|B_{s_2}^n\|_{\ell^1(\mathbb{Z})}+d_n^{-1}\sum_{|j|\le A}|\langle \delta_j* B_{s_1}^n,B_{s_2}^n\rangle_{\ell^2({\mathbb{Z})}}|
\end{equation}
\end{claim}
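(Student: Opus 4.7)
To prove the Claim, my plan is to begin with the adjoint identity
\[
\langle K_n * B^n_{s_1},\, K_n * B^n_{s_2}\rangle_{\ell^2(\mathbb{Z})} = \sum_{z\in\mathbb{Z}} (K_n * \widetilde{K}_n)(z)\,\langle \delta_z * B^n_{s_1},\, B^n_{s_2}\rangle_{\ell^2(\mathbb{Z})},
\]
and then to replace $K_n*\widetilde{K}_n$ by $F_n$. The resulting error will be controlled, using that $(K_n*\widetilde{K}_n - F_n)$ is supported in $[-D_n,D_n]$ with sup bound $D_n^{-1-\varepsilon_1}$, by
\[
\|K_n*\widetilde{K}_n - F_n\|_1\,\|B^n_{s_1}\|_\infty\,\|B^n_{s_2}\|_1 \lesssim D_n^{-\varepsilon_1}\cdot \alpha d_n\cdot \|B^n_{s_2}\|_1 \lesssim D_n^{\varepsilon_0-\varepsilon_1}\,\alpha\,\|B^n_{s_2}\|_1,
\]
which will be of the required form $\lambda^{s(n)-s_1}\alpha\|B^n_{s_2}\|_1$ for a suitable $\lambda\in(0,1)$ provided $\varepsilon_0<\varepsilon_1$.

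For the main $F_n$ contribution I will split the $z$-sum into $|z|\le A$ and $|z|>A$. The near piece will use $|F_n(z)|\lesssim d_n^{-1}$ from $(\ref{eq1})$ and will produce directly the second RHS term $d_n^{-1}\sum_{|j|\le A}|\langle \delta_j*B^n_{s_1},B^n_{s_2}\rangle|$. For the far piece I will invoke the mean-zero property $\sum_{y\in Q_{s_1,j}}B^n_{s_1}(y)=0$ on each Calder\'on--Zygmund cube to recenter the kernel, obtaining a sum of differences $[F_n\mathbf{1}_{|\cdot|>A}](x-y) - [F_n\mathbf{1}_{|\cdot|>A}](x-c_{s_1,j})$, where $c_{s_1,j}$ denotes the center of $Q_{s_1,j}$. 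I will then split $x$ into a good region $|x-c_{s_1,j}|\gtrsim d_n^{\varepsilon_2}+2^{s_1}$, where the regularity $(\ref{eq2})$ applies and gives $|F_n(x-y)-F_n(x-c_{s_1,j})|\lesssim D_n^{-2}\cdot 2^{s_1}$, and a boundary region $|x-c_{s_1,j}|\lesssim d_n^{\varepsilon_2}+2^{s_1}$, where the truncated kernel may jump but the region's $x$-volume is small.

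On the good region, this smoothness bound, combined with $\|B^n_{s_1}\|_{L^1(Q_{s_1,j})}\lesssim\alpha\cdot 2^{s_1}$, summation over $j$ (controlled by the pairwise disjointness of the cubes at scale $s_1$), and the key inequality $2^{s_1}/D_n\lesssim 2^{-(s(n)-s_1)}$, will yield the decay $\lambda^{s(n)-s_1}\alpha\|B^n_{s_2}\|_1$. On the boundary region I will bound $F_n$ crudely by $d_n^{-1}$ and exploit that the neighborhoods $\{|x-c_{s_1,j}|\lesssim d_n^{\varepsilon_2}+2^{s_1}\}$ have bounded overlap per point (since the $c_{s_1,j}$ are spaced at distance at least $2^{s_1}$), which lets me convert the local $L^1$-sums of $B^n_{s_2}$ into the global norm $\|B^n_{s_2}\|_1$ at a cost comparable to $d_n^{\varepsilon_2}+2^{s_1}$. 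The main obstacle will be the parameter balancing: $\lambda$ must be chosen small enough that all four pieces (error, near, good, boundary) decay at rate at least $\lambda^{s(n)-s_1}$, which requires both $\varepsilon_0<\varepsilon_1$ (for the error) and $\varepsilon_0(1+\varepsilon_2)<1$ (for the boundary contribution, via $d_n^{1+\varepsilon_2}D_n^{-1}\lesssim D_n^{\varepsilon_0(1+\varepsilon_2)-1}$), both of which are ensurable under the hypotheses of Theorem~\ref{absThm}.
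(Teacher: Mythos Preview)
Your overall architecture (adjoint identity, replace $K_n*\widetilde{K}_n$ by $F_n$, split $F_n$ into near/far, use mean zero plus regularity on the far piece) matches the paper. However, the way you control the error term $E_n=K_n*\widetilde{K}_n-F_n$ has a genuine gap, and your parameter claims at the end are false.

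For the error you estimate
\[
|\langle E_n*B^n_{s_1},B^n_{s_2}\rangle|\le \|E_n\|_{\ell^1}\|B^n_{s_1}\|_{\ell^\infty}\|B^n_{s_2}\|_{\ell^1}\lesssim D_n^{-\varepsilon_1}\cdot \alpha d_n\cdot\|B^n_{s_2}\|_{\ell^1}\lesssim D_n^{\varepsilon_0-\varepsilon_1}\alpha\|B^n_{s_2}\|_{\ell^1},
\]
and then assert that $\varepsilon_0<\varepsilon_1$ is ``ensurable under the hypotheses of Theorem~\ref{absThm}''. It is not. The theorem only assumes $\varepsilon_0\in(0,1)$ and $\varepsilon_1>0$ independently; no ordering is imposed. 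Worse, in the intended application one has $d_n\simeq\varphi_1(2^n)$, $D_n\simeq 2^n$, so $\varepsilon_0\approx\gamma_1\in(29/30,1)$, while Lemma~\ref{2Aprox} gives $\varepsilon_1=\chi=1-\gamma_1\in(0,1/30)$. Thus $\varepsilon_0>\varepsilon_1$ and your bound blows up. The paper avoids the stray factor $d_n$ entirely: it localizes $B^n_{s_1}$ to windows $\widetilde{Z}_{m,n}$ of length $\simeq D_n$, observes that each such window contains $\lesssim 2^{s(n)-s_1}$ cubes $Q_{s_1,k}$ with $\|B^n_{s_1}\mathbf{1}_{Q_{s_1,k}}\|_{\ell^1}\lesssim\alpha 2^{s_1}$, hence $\|B^n_{s_1}\mathbf{1}_{\widetilde{Z}_{m,n}}\|_{\ell^1}\lesssim\alpha D_n$, and gets $D_n^{-1-\varepsilon_1}\cdot\alpha D_n=\alpha D_n^{-\varepsilon_1}$ without ever touching $d_n$.

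The same problem recurs in your boundary piece. You bound $F_n$ there ``crudely by $d_n^{-1}$'', but on $\{|z|>A\}$ the hypothesis $(\ref{eq1})$ already gives $|F_n(z)|\lesssim D_n^{-1}$, which is much sharper. Your resulting condition $\varepsilon_0(1+\varepsilon_2)<1$ again fails in the application: there $\varepsilon_2=1$, so this would force $\varepsilon_0<1/2$, whereas $\varepsilon_0\approx\gamma_1>29/30$. The paper uses the $D_n^{-1}$ bound and splits according to whether $2^{s_1}\le d_n^{\varepsilon_2}$ or not; in the first case one counts $\lesssim 2^{-s_1}d_n^{\varepsilon_2}$ cubes and obtains $D_n^{-1}d_n^{\varepsilon_2}\alpha\le D_n^{\varepsilon_0-1}\alpha$, which only needs $\varepsilon_0<1$.
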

Assuming that ($\ref{difcompact}$) holds, let us see how we can deduce the desired estimate. We have 
\[
\alpha^{-2}\sum_{n\in\mathbb{N}}\bigg(\sum_{s=0 }^{s(n)-1}\|K_n*B^n_s\|_{\ell^2(\mathbb{Z})}^2+2\sum_{0\le s_1<s_2\le s(n)-1}\langle K_n*B^n_{s_1},K_n*B^n_{s_2} \rangle_{\ell^2(\mathbb{Z})}\bigg)\lesssim
\]
\[
\alpha^{-2}\sum_{n\in\mathbb{N}}\sum_{s=0 }^{s(n)-1}\Big(\lambda^{s(n)-s}\alpha \|B_{s}^n\|_{\ell^1(\mathbb{Z})}+d_n^{-1}\sum_{|j|\le A}|\langle \delta_j* B_{s}^n,B_{s}^n\rangle_{\ell^2({\mathbb{Z})}}|\Big)+
\]
\[
\alpha^{-2}\sum_{n\in\mathbb{N}}\sum_{0\le s_1<s_2\le s(n)-1}\Big(\lambda^{s(n)-s_1}\alpha \|B_{s_2}^n\|_{\ell^1(\mathbb{Z})}+d_n^{-1}\sum_{|j|\le A}|\langle \delta_j* B_{s_1}^n,B_{s_2}^n\rangle_{\ell^2({\mathbb{Z})}}|\Big)\lesssim
\]
\[
\alpha^{-1}\sum_{n\in\mathbb{N}}\sum_{s=0 }^{s(n)-1}\lambda^{s(n)-s}\|B_{s}^n\|_{\ell^1(\mathbb{Z})}+\alpha^{-2}\sum_{n\in\mathbb{N}}\sum_{s=0}^{s(n)-1}d_n^{-1}\sum_{|j|\le A}|\langle \delta_j* B_{s}^n,B_{s}^n\rangle_{\ell^2({\mathbb{Z})}}|+
\]
\[
\alpha^{-1}\sum_{n\in\mathbb{N}}\sum_{0\le s_1<s_2\le s(n)-1}\lambda^{s(n)-s_1} \|B_{s_2}^n\|_{\ell^1(\mathbb{Z})}+\alpha^{-2}\sum_{n\in\mathbb{N}}\sum_{0\le s_1<s_2\le s(n)-1}d_n^{-1}\sum_{|j|\le A}|\langle \delta_j* B_{s_1}^n,B_{s_2}^n\rangle_{\ell^2({\mathbb{Z})}}|
\]

For the first and the third term note that
\[
\alpha^{-1}\sum_{n\in\mathbb{N}}\sum_{0\le s_1<s_2\le s(n)-1}\lambda^{s(n)-s_1} \|B_{s_2}^n\|_{\ell^1(\mathbb{Z})}=\alpha^{-1}\sum_{n\in\mathbb{N}}\sum_{1\le  s_2\le s(n)-1}\|B_{s_2}^n\|_{\ell^1(\mathbb{Z})}\big(\sum_{0\le s_1\le s_2-1}\lambda^{s(n)-s_1}\big) \lesssim
\]
\[
\alpha^{-1}\sum_{n\in\mathbb{N}}\sum_{1\le  s_2\le s(n)-1}\lambda^{s(n)-s_2}\|B_{s_2}^n\|_{\ell^1(\mathbb{Z})} \le \alpha^{-1}\sum_{n\in\mathbb{N}}\sum_{s=0}^{s(n)-1}\lambda^{s(n)-s}\|B_{s}^n\|_{\ell^1(\mathbb{Z})}\lesssim_{\lambda}
\]
\[
 \alpha^{-1}\sum_{n\in\mathbb{N}}\sum_{s=0}^{s(n)}\lambda^{s}\|B_{s(n)-s-1}^n\|_{\ell^1(\mathbb{Z})}\lesssim \alpha^{-1}\sum_{s\in\mathbb{N}_0}\lambda^s\sum_{(t,j)\in\mathcal{B}}\|b_{t,j}\|_{\ell^1(\mathbb{Z})}\lesssim\alpha^{-1}\sum_{(t,j)\in\mathcal{B}}4\alpha |Q_{t,j}|\lesssim \alpha^{-1}\|f\|_{\ell^1(\mathbb{Z})} 
\]
where we have used the estimates from the Calder\'on-Zygmund decomposition in the beginning of the proof.

The fourth term is bounded as follows.
\[
\alpha^{-2}\sum_{n\in\mathbb{N}}\sum_{0\le s_1<s_2\le s(n)-1}d_n^{-1}\sum_{1\le |j|\le A}|\langle \delta_j*B^n_{s_1},B^n_{s_2}\rangle_{\ell^2(\mathbb{Z})}|\le
\]
\[
\alpha^{-2}\sum_{1\le |j|\le A}\sum_{n\in\mathbb{N}}\sum_{0\le s_1<s_2\le s(n)-1}d_n^{-1}\sum_{x\in\mathbb{Z}} |B^n_{s_1}(x-j)||B^n_{s_2}(x)|=
\]
\[
\alpha^{-2}\sum_{x\in\mathbb{Z}}\sum_{1\le |j|\le A}\sum_{n\in\mathbb{N}}\sum_{0\le s_1<s_2\le s(n)-1}d_n^{-1} |B^n_{s_1}(x-j)||B^n_{s_2}(x)|
\]
Fix $|j|\le A$ and $x\in\mathbb{Z}$ such that $x-j\in \supp(b_{s_0})$ for some integer $s_0$. Since the supports of $b_{s}$'s are disjoint we have that there can be at most one integer $s_0'$ such that $x\in\supp(b_{s_0'})$. Note also that $[|h^n_s|]_{Q_{s,j}}\lesssim \alpha$, as we observed earlier. We have

\begin{equation}\label{redhere}
\sum_{n\in\mathbb{N}}\sum_{0\le s_1<s_2\le s(n)-1}d_n^{-1} |B^n_{s_1}(x-j)||B^n_{s_2}(x)|\le \sum_{n\in\mathbb{N}}d_n^{-1}|B^n_{s_0}(x)||B^n_{s_0'}(x-j)|\lesssim
\end{equation}
\[
\sum_{n\in\mathbb{N}}d_n^{-1} \big(|b_{s_0}(x)|1_{\{y\in\mathbb{Z}:\,|b_{s_0}(y)|\le\alpha d_n\}}(x)+\alpha1_{\supp b_{s_0}}(x)\big)\big(|b_{s_0'}(x-j)|1_{\{y\in\mathbb{Z}:\,|b_{s_0'}(y)|\le\alpha d_n\}}(x-j)+\alpha1_{\supp b_{s_0'}}(x-j)\big)\le
\]
\[
\sum_{\substack{n\in\mathbb{N}:\\d_n\ge |b_{s_0}(x)|/\alpha\\d_n\ge|b_{s_0'}(x-j)|/\alpha}}d_n^{-1} |b_{s_0}(x)||b_{s_0'}(x-j)|+\alpha^2\sum_{n\in\mathbb{N}}d_n^{-1}1_{\supp b_{s_{0}}}(x)+
\]
\[
\sum_{\substack{n\in\mathbb{N}:\\d_n\ge |b_{s_0}(x)|/\alpha}}d_n^{-1}|b_{s_0}(x)|\alpha 1_{\supp b_{s_0'}}(x-j)+\sum_{\substack{n\in\mathbb{N}:\\d_n\ge |b_{s_0'}(x-j)|/\alpha}}d_n^{-1}|b_{s_0'}(x-j)|\alpha 1_{\supp b_{s_0}}(x)\lesssim
\]
\[
\max\{|b_{s_0}(x)|^2,|b_{s_0'}(x-j)|^2\}\min\{\alpha/|b_{s_0}(x)|,\alpha/|b_{s_0'}(x-j)|\}+\alpha^21_{\supp b_{s_0}}(x)\le
\]
\[
\sum_{s\in \mathbb{N}_0}\alpha|b_s(x)|+\alpha|b_s(x-j)|+\sum_{s\in \mathbb{N}_0}\alpha^21_{\supp b_s}(x)
\]
where we have used the existence of a finite constant $M>1$ such that $Md_n\le d_{n+1}$. We get that 
\[
\alpha^{-2}\sum_{n\in\mathbb{N}}\sum_{0\le s_1<s_2\le s(n)-1}d_n^{-1}\sum_{1\le |j|\le A}|\langle \delta_j*B^n_{s_1},B^n_{s_2}\rangle_{\ell^2(\mathbb{Z})}|\lesssim 
\]

\[
\alpha^{-2}\sum_{x\in\mathbb{Z}}\sum_{1\le |j|\le A}\Big(\sum_{s\in \mathbb{N}_0}\alpha|b_s(x)|+\alpha|b_s(x-j)|+\sum_{s\in \mathbb{N}_0}\alpha^21_{\supp b_s}(x)\Big)\lesssim
\]
\[
\alpha^{-1}\sum_{1\le |j|\le A}\sum_{s\in\mathbb{N}_0}2\|b_s\|_{\ell^1(\mathbb{Z})}+A\sum_{s\in\mathbb{N}_0}|\supp b_s|\lesssim \alpha^{-1}A\|b\|_{\ell^1(\mathbb{Z})}+A\Big|\bigcup_{(s,j)\in\mathcal{B}}Q_{s,j}\Big|\lesssim\alpha^{-1}\|f\|_{\ell^1(\mathbb{Z})} 
\]
A similar argument may be used to bound the second term. For the sake of completeness we note that 
\[
\alpha^{-2}\sum_{n\in\mathbb{N}}\sum_{s=0}^{s(n)-1}d_n^{-1}\sum_{1\le |j|\le A}|\langle \delta_j*B^n_{s},B^n_{s}\rangle_{\ell^2(\mathbb{Z})}|\le
\]
\[
\alpha^{-2}\sum_{1\le |j|\le A}\sum_{n\in\mathbb{N}}\sum_{s=0}^{s(n)-1}d_n^{-1}\sum_{x\in\mathbb{Z}} |B^n_{s}(x-j)||B^n_{s}(x)|=
\]
\[
\alpha^{-2}\sum_{x\in\mathbb{Z}}\sum_{1\le |j|\le A}\sum_{n\in\mathbb{N}}\sum_{s=0}^{s(n)-1}d_n^{-1} |B^n_{s}(x-j)||B^n_{s}(x)|
\]
Fix $|j|\le A$ and $x\in\mathbb{Z}$ such that $x-j\in \supp(b_{s_0})$ for some integer $s_0$. Since the supports of $b_{s}$'s are disjoint we have that there can be at most one integer $s_0'$ such that $x\in\supp(b_{s_0'})$. We have
\[
\sum_{n\in\mathbb{N}}\sum_{s=0}^{s(n)-1}d_n^{-1} |B^n_{s}(x-j)||B^n_{s}(x)|\le \sum_{n\in\mathbb{N}}d_n^{-1}|B^n_{s_0}(x)||B^n_{s_0'}(x-j)|
\]
In fact if $s_0'\neq s_0$, then the left-hand side equals $0$. Nevertheless, the inequality above holds for some $s_0,s_0'$ that depend on $x,j$ and by comparing it with ($\ref{redhere}$), we see that an argument identical to the one used previously may be used here. The proof will be completed once we established the estimate ($\ref{difcompact}$) of the claim. We do this in the following subsection. 
\subsection{Proof of the estimate $(\ref{difcompact})$}
Let $n\in\mathbb{N}$ and $0\le  s_1\le s_2\le s(n)-1$ and let us note that on the one hand
\[
\big|\langle K_n*B^n_{s_1},K_n*B^n_{s_2} \rangle_{\ell^2(\mathbb{Z})}\big|=\big|\langle K_n*\widetilde{K}_n*B^n_{s_1},B^n_{s_2} \rangle_{\ell^2(\mathbb{Z})}\big|\le
\]
\[
\big|\langle F_n*B^n_{s_1},B^n_{s_2} \rangle_{\ell^2(\mathbb{Z})}\big|+\big|\langle (K_n*\widetilde{K}_n-F_n)*B^n_{s_1},B^n_{s_2} \rangle_{\ell^2(\mathbb{Z})}\big|\le
\]
Now decompose $F_n(x)=F_n(x)1_{|x|\le A}+F_n(x)1_{|x|> A}=\sum_{-A \le j\le A}F_n(j)\delta_j(x)+F_n(x)1_{|x|> A}$ and let $G_n(x)=F_n(x)1_{|x|>A}$. We obtain
\[
\big|\langle K_n*B^n_{s_1},K_n*B^n_{s_2} \rangle_{\ell^2(\mathbb{Z})}\big|\le \sum_{-A\le j\le A}|F_n(j)|\big|\langle \delta_j*B^n_{s_1},B^n_{s_2} \rangle_{\ell^2(\mathbb{Z})}\big|+\big|\langle G_n*B^n_{s_1},B^n_{s_2} \rangle_{\ell^2(\mathbb{Z})}\big|+
\]
\[
+\big|\langle (K_n*\widetilde{K}_n-F_n)*B^n_{s_1},B^n_{s_2} \rangle_{\ell^2(\mathbb{Z})}\big|\lesssim d_n^{-1}\sum_{|j|\le A}\big|\langle \delta_j*B^n_{s_1},B^n_{s_2} \rangle_{\ell^2(\mathbb{Z})}\big|+\big|\langle G_n*B^n_{s_1},B^n_{s_2} \rangle_{\ell^2(\mathbb{Z})}\big|+
\]
\[
+\big|\langle (K_n*\widetilde{K}_n-F_n)*B^n_{s_1},B^n_{s_2} \rangle_{\ell^2(\mathbb{Z})}\big|
\]
In the right hand side of our inequality one of the terms  of the desired estimate already appeared and thus we can now focus on the other two summands. Let $Z_{m,n}=\big[m2^{s(n)},(m+1)2^{s(n)}\big)\cap \mathbb{Z}$, $\widetilde{Z}_{m,n}=\big[(m-1)2^{s(n)},(m+2)2^{s(n)}\big)\cap \mathbb{Z}$ and $E_n=K_n*\widetilde{K}_n-F_n$. Note that $\supp(E_n)\subseteq [-D_n,D_n]\subseteq [-2^{s(n)},2^{s(n)}]$. We have that
\[
\big|\langle (K_n*\widetilde{K}_n-F_n)*B^n_{s_1},B^n_{s_2} \rangle_{\ell^2(\mathbb{Z})}\big|=\big|\sum_{x\in\mathbb{Z}}(E_n*B^n_{s_1})(x)B^n_{s_2}(x)\big|=\big|\sum_{x\in\mathbb{Z}}\Big(\sum_{y\in\mathbb{Z}}E_n(y)B^n_{s_1}(x-y)\Big)B^n_{s_2}(x)\big|\le
\]
\[
\sum_{x\in\mathbb{Z}}\sum_{y\in\mathbb{Z}}|E_n(y)B^n_{s_1}(x-y)B^n_{s_2}(x)|\le \sum_{y\in\mathbb{Z}}\sum_{m\in\mathbb{Z}}\sum_{x\in Z_{m,n}}1_{\widetilde{Z}_{m,n}}(x-y)|E_n(y)B_{s_1}^n(x-y)B^n_{s_2}(x)|\le
\]
\[ D_n^{-1-\varepsilon_1}\sum_{m\in\mathbb{Z}}\sum_{x\in Z_{m,n}}|B^n_{s_2}(x)|\bigg(\sum_{y\in\mathbb{Z}}1_{\widetilde{Z}_{m,n}}(x-y)|B^n_{s_1}(x-y)|\bigg)\le
\]
\[
D_n^{-1-\varepsilon_1}\|B^n_{s_2}\|_{\ell^1(\mathbb{Z})}\sup_{m\in\mathbb{Z}}\|B^n_{s_1}1_{\widetilde{Z}_{m,n}}\|_{\ell^1(\mathbb{Z})}
\]
Now we note that for any $m\in\mathbb{Z}$ we have
\begin{equation}\label{prcalc}
\|B^n_{s_1}1_{\widetilde{Z}_{m,n}}\|_{\ell^1(\mathbb{Z})}=\sum_{\substack{k\in\mathbb{Z}\\(s_1,k)\in\mathcal{B}}}\sum_{x\in\mathbb{Z}}|B^n_{s_1}1_{\widetilde{Z}_{m,n}}(x)1_{Q_{s_1,k}}(x)|\le
\\
\sum_{\substack{k\in\mathbb{Z}:\,(s_1,k)\in\mathcal{B}\\Q_{s_1,k}\cap \widetilde{Z}_{m,n}\neq \emptyset}}\|B^n_{s_1}1_{Q_{s_1,k}}\|_{\ell^1(\mathbb{Z})}
\end{equation}
On the one hand
\begin{equation}\label{endest}
\|B^n_{s_1}1_{Q_{s_1,k}}\|_{\ell^1(\mathbb{Z})}\le \|h_{s_1}^n1_{Q_{s_1,k}}\|_{\ell^1(\mathbb{Z})}+\|[h_{s_1}^n]_{Q_{s_1,k}}1_{Q_{s_1,k}}\|_{\ell^1(\mathbb{Z})}\le 
2\|b_{s_1}1_{Q_{s_1,k}}\|_{\ell^1(\mathbb{Z})}\le 4\alpha|Q_{s_1,k}|\le 4\alpha 2^{s_1}
\end{equation}
and on the other hand $|\{k\in\mathbb{Z}:\,(s_1,k)\in\mathcal{B}\,\&\,Q_{s_1,k}\cap \widetilde{Z}_{m,n}\neq \emptyset\}|\lesssim 2^{s(n)-s_1}$ since $\widetilde{Z}_{m,n}$ can be partitioned into $3$ dyadic intervals, $Q_{s_1,k}$ is a dyadic interval and $s_1\le s(n)$. Therefore
\begin{equation}\label{prcalc2}
\|B^n_{s_1}1_{\widetilde{Z}_{m,n}}\|_{\ell^1(\mathbb{Z})}\lesssim 2^{s(n)-s_1}4\alpha  2^{s_1}=8\alpha 2^{s(n)-1}\le 8\alpha D_n
\end{equation}
and finally
\[
\big|\langle (K_n*\widetilde{K}_n-F_n)*B^n_{s_1},B^n_{s_2} \rangle_{\ell^2(\mathbb{Z})}\big|\lesssim D_n^{-\varepsilon_1}8\alpha \|B_{s_2}^n\|_{\ell^1(\mathbb{Z})}\lesssim 2^{-\varepsilon_1(s(n)-s_1)}\alpha \|B_{s_2}^n\|_{\ell^1(\mathbb{Z})} 
\]
since $D_n\ge 2^{s(n)-1}\ge 2^{s(n)-s_1-1}$, we get
\[
\big|\langle (K_n*\widetilde{K}_n-F_n)*B^n_{s_1},B^n_{s_2} \rangle_{\ell^2(\mathbb{Z})}\big|\lesssim \big(2^{-\varepsilon_1}\big)^{s(n)-s_1}\alpha \|B_{s_2}^n\|_{\ell^1(\mathbb{Z})}
\]
as desired.

Now we focus on the last term $\big|\langle G_n*B^n_{s_1},B^n_{s_2} \rangle_{\ell^2(\mathbb{Z})}\big|$. Note that $\supp(G_n)\subseteq [-2^{s(n)},2^{s(n)}]$. We have
\[
\big|\langle G_n*B^n_{s_1},B^n_{s_2} \rangle_{\ell^2(\mathbb{Z})}\big|=\big|\sum_{x\in\mathbb{Z}}(G_n*B^n_{s_1})(x)B^n_{s_2}(x)\big|=\big|\sum_{x\in\mathbb{Z}}\Big(\sum_{y\in\mathbb{Z}}G_n(y)B^n_{s_1}(x-y)\Big)B^n_{s_2}(x)\big|\le
\]

\[
\big|\sum_{m\in\mathbb{Z}}\sum_{x\in Z_{m,n}}\Big(\sum_{y\in\mathbb{Z}}G_n(y)B^n_{s_1}(x-y)1_{\widetilde{Z}_{m,n}}(x-y)\Big)B^n_{s_2}(x)\big|\le
\]

\[
\sum_{m\in\mathbb{Z}}\sup_{x \in Z_{m,n}}\big|G_n*(B^n_{s_1}1_{\widetilde{Z}_{m,n}})(x)\big|\sum_{x\in Z_{m,n}}|B^n_{s_2}(x)|\le \sup_{m\in\mathbb{Z}}\sup_{x \in Z_{m,n}}\big|G_n*\big(B^n_{s_1}1_{\widetilde{Z}_{m,n}}\big)(x)\big|\|B^n_{s_2}(x)\|_{\ell^1(\mathbb{Z})}
\]
Let us define $B^n_{s,j}=B^n_{s}1_{Q_{s,j}}$, and note that for any $m\in\mathbb{Z}$, $x\in Z_{m,n}$, we have
\[
\big|G_n*\big(B^n_{s_1}1_{\widetilde{Z}_{m,n}}\big)(x)\big|\le \sum_{\substack{j\in \mathbb{Z}:\\(s_1,j)\in\mathcal{B}}}\big|G_n*\big(B^n_{s_1,j}1_{\widetilde{Z}_{m,n}}\big)(x)\big|\text{, since }\sum_{\substack{j\in \mathbb{Z}:\\(s_1,j)\in\mathcal{B}}}B^n_{s,j}=B^n_{s}\text{.}
\]
We also note that $\sum_{x\in\mathbb{Z}}B^n_{s_1,j}(x)1_{\widetilde{Z}_{m,n}}(x)=0$. To see this note that if $\supp(B^n_{s_1,j})\cap \widetilde{Z}_{m,n}=\emptyset$, then it is trivial, and if they intersect, we must have that  $\supp(B^n_{s_1,j})\subseteq \widetilde{Z}_{m,n}$, since $\supp(B^n_{s_1,j})\subseteq Q_{s_1,j}$ which is a dyadic interval of length $2^{s_1}$ and  $\widetilde{Z}_{m,n}$ is the union of three dyadic intervals of larger length. In the second case we get
\[
\sum_{x\in\mathbb{Z}}B^n_{s_1,j}(x)1_{\widetilde{Z}_{m,n}}(x)=\sum_{x\in\mathbb{Z}}B^n_{s_1,j}(x)=0
\]
from the definition of $B^n_{s_1,j}(x)$. Fix $m\in\mathbb{Z}$ and $j\in \mathbb{Z}$ such that $(s_1,j)\in\mathcal{B}$ and let $x_{s_1,j}$
be the center of the cube $Q_{s_{1},j}$. Assume $x\in Z_{m,n}$ is such that $|x-x_{s_1,j}|\ge C d_n^{\varepsilon_2}+C2^{s_1}$. Using the cancellation property we have established together with the regularity assumptions for $F_n$, we get
\[
\big|G_n*\big(B^n_{s_1,j}1_{\widetilde{Z}_{m,n}}\big)(x)\big|=\Big|\sum_{y\in\mathbb{Z}}\big(G_n(x-y)-G_n(x-x_{s_1,j})\big)B^n_{s_1,j}(y)1_{\widetilde{Z}_{m,n}}(y)\Big|\lesssim
\]
\[
\sum_{y\in\mathbb{Z}}D_n^{-2}|y-x_{s_1,j}||B^n_{s_1,j}(y)1_{\widetilde{Z}_{m,n}}(y)|\lesssim D_n^{-2}2^{s_1}\|B^n_{s_1,j}1_{\widetilde{Z}_{m,n}}\|_{\ell^1(\mathbb{Z})}
\]
Here we have used the fact that $|x-y|\ge |x-x_{s_1,j}|-|x_{s_1,j}-y|\ge C d_n^{\varepsilon_2}+C2^{s_1}-2^{s_1}\gtrsim d_n^{\varepsilon_2}$ and thus we may use ($\ref{eq2}$). Taking into account ($\ref{eq1}$) and the definition of $G_n$, we get that for any $x\in\mathbb{Z}$
\[
\big|G_n*\big(B^n_{s_1,j}1_{\widetilde{Z}_{m,n}}\big)(x)\big|\lesssim D_n^{-1}\|B^n_{s_1,j}1_{\widetilde{Z}_{m,n}}\|_{\ell^1(\mathbb{Z})}
\]
Now we may estimate as follows
\[
\sup_{m\in\mathbb{Z}}\sup_{x\in Z_{m,n}}\big|G_n*\big(B^n_{s_1}1_{\widetilde{Z}_{m,n}}\big)(x)\big|\le\sup_{m\in\mathbb{Z}}\sup_{x\in Z_{m,n}}\sum_{\substack{j\in \mathbb{Z}:\\(s_1,j)\in\mathcal{B}}}\big|G_n*\big(B^n_{s_1,j}1_{\widetilde{Z}_{m,n}}\big)(x)\big|\le
\]

\[
\sup_{m\in\mathbb{Z}}\sup_{x\in Z_{m,n}}\sum_{\substack{j\in \mathbb{Z}:\,(s_1,j)\in\mathcal{B}\\|x-x_{s_1,j}|\ge C d_n^{\varepsilon_2}+C2^{s_1}}}D_n^{-2}2^{s_1}\|B^n_{s_1,j}1_{\widetilde{Z}_{m,n}}\|_{\ell^1(\mathbb{Z})}+
\sup_{m\in\mathbb{Z}}\sup_{x\in Z_{m,n}}\sum_{\substack{j\in \mathbb{Z}:\,(s_1,j)\in\mathcal{B}\\|x-x_{s_1,j}|< C d_n^{\varepsilon_2}+C2^{s_1}}}D_n^{-1}\|B^n_{s_1,j}1_{\widetilde{Z}_{m,n}}\|_{\ell^1(\mathbb{Z})}
\]
For the first summand note that 
\[
\sup_{m\in\mathbb{Z}}\sup_{x\in Z_{m,n}}\sum_{\substack{j\in \mathbb{Z}:\,(s_1,j)\in\mathcal{B}\\|x-x_{s_1,j}|\ge C d_n^{\varepsilon_2}+C2^{s_1}}}D_n^{-2}2^{s_1}\|B^n_{s_1,j}1_{\widetilde{Z}_{m,n}}\|_{\ell^1(\mathbb{Z})}\le D_n^{-2}2^{s_1}\sup_{m\in\mathbb{Z}}\sum_{\substack{j\in \mathbb{Z}:\,(s_1,j)\in\mathcal{B}}}\|B^n_{s_1,j}1_{\widetilde{Z}_{m,n}}\|_{\ell^1(\mathbb{Z})}\le
\]
\[
D_n^{-2}2^{s_1}\sup_{m\in\mathbb{Z}}\|B^n_{s_1}1_{\widetilde{Z}_{m,n}}\|_{\ell^1(\mathbb{Z})}
\]
The calculations from ($\ref{prcalc}$), ($\ref{prcalc2}$) show that $\sup_{m\in\mathbb{Z}}\|B^n_{s_1}1_{\widetilde{Z}_{m,n}}\|_{\ell^1(\mathbb{Z})}\lesssim \alpha D_n$ and thus the first summand is bounded by a constant multiple of    
\[
\alpha D_n^{-1}2^{s_1}\le \alpha 2^{1-s(n)}2^{s_1}\lesssim \alpha (1/2)^{s(n)-s_1}
\]

For the second summand we consider two cases. In the first case we assume that $2^{s_1}\le d_n^{\varepsilon_2}$. In that case, any interval of radius $\lesssim d_n^{\varepsilon_2}$ contains at most $\lesssim 2^{-s_1}d_n^{\varepsilon_2}$ sets of the form $Q_{s_1,j}$. Thus we have 
\[
\sup_{m\in\mathbb{Z}}\sup_{x\in Z_{m,n}}\sum_{\substack{j\in \mathbb{Z}:\,(s_1,j)\in\mathcal{B}\\|x-x_{s_1,j}|< C d_n^{\varepsilon_2}+C2^{s_1}}}D_n^{-1}\|B^n_{s_1,j}1_{\widetilde{Z}_{m,n}}\|_{\ell^1(\mathbb{Z})}\lesssim
\]
\[
\sup_{m\in\mathbb{Z}}\sup_{x\in Z_{m,n}}|\{j\in \mathbb{Z}:\,(s_1,j)\in\mathcal{B}\text{ and }|x-x_{s_1,j}|< 2C d_n^{\varepsilon_2}\}|D_n^{-1}\alpha 2^{s_1}\lesssim
\]
\[
2^{-s_1}d_n^{\varepsilon_2}D_n^{-1} \alpha 2^{s_1}\lesssim  \frac{\alpha d_n}{D_n^{\varepsilon_0}D_n^{1-\varepsilon_0}}\lesssim  \frac{\alpha}{(2^{s(n)})^{1-\varepsilon_0}}\lesssim \alpha (1/2^{1-\varepsilon_0})^{s(n)-s_1}
\]
where we used the estimate ($\ref{endest}$), the fact that $d_n\le D_n^{\varepsilon_0}$ and the fact that $2^{s(n)-1}<D_n\le 2^{s(n)}$. We have established the appropriate bound for the first case.

In the second case, we assume that  $2^{s_1}> d_n^{\varepsilon_2}$. In that case, any interval of radius $\lesssim 2^{s_1}$ contains at most $\lesssim 2^{s_1}2^{-s_1}=1$ sets of the form $Q_{s_1,j}$. Thus we have
\[
\sup_{m\in\mathbb{Z}}\sup_{x\in Z_{m,n}}\sum_{\substack{j\in \mathbb{Z}:\,(s_1,j)\in\mathcal{B}\\|x-x_{s_1,j}|< C d_n^{\varepsilon_2}+C2^{s_1}}}D_n^{-1}\|B^n_{s_1,j}1_{\widetilde{Z}_{m,n}}\|_{\ell^1(\mathbb{Z})}\lesssim
\]
\[
\sup_{m\in\mathbb{Z}}\sup_{x\in Z_{m,n}}|\{j\in \mathbb{Z}:\,(s_1,j)\in\mathcal{B}\text{ and }|x-x_{s_1,j}|< 2C 2^{s_1}\}|D_n^{-1}\alpha 2^{s_1}\lesssim
 D_n^{-1}\alpha 2^{s_1}\lesssim \alpha  (1/2)^{s(n)-s_1}
 \]
This concludes the second case. 

Combining everything we get
\[
\big|\langle G_n*B^n_{s_1},B^n_{s_2} \rangle_{\ell^2(\mathbb{Z})}\big|\lesssim \sup_{m\in\mathbb{Z}}\sup_{x \in Z_{m,n}}\big|G_n*\big(B^n_{s_1}1_{\widetilde{Z}_{m,n}}\big)(x)\big|\|B^n_{s_2}(x)\|_{\ell^1(\mathbb{Z})}\lesssim \big(2^{\varepsilon_0-1}\big)^{s(n)-s_1}\|B^n_{s_2}(x)\|_{\ell^1(\mathbb{Z})}
\]
since $2^{-1}\lesssim2^{\varepsilon_0-1}$. For $\lambda=\min\{2^{-\varepsilon_1},2^{\varepsilon_0-1}\}\in (0,1)$, we obtain the estimate $(\ref{difcompact})$. The proof of Theorem~$\ref{absThm}$ is complete.
\end{proof}

\end{document}